\providecommand{\algorithmname}{Algorithm}
\newenvironment{lyxcode}
	{\par\begin{list}{}{
		\setlength{\rightmargin}{\leftmargin}
		\setlength{\listparindent}{0pt}
		\raggedright
		\setlength{\itemsep}{0pt}
		\setlength{\parsep}{0pt}
		\normalfont\ttfamily}%
	 \item[]}
	{\end{list}}
\newtheorem{lemma}{\textbf{Lemma}}
\newtheorem{theorem}{\textbf{Theorem}}\setcounter{theorem}{0}
\newtheorem{corollary}{\textbf{Corollary}}
\newtheorem{assumption}{\textbf{Assumption}}
\newtheorem{remark}{\textbf{Remark}}
\DeclareMathOperator*{\KL}{KL}
\newcommand{\binm}{C_{i,j}^{m}}
\newcommand{\centerm}{q_{i,j}^{m}}
\newcommand{\instfam}{\mathcal{F}}
\newcommand{\omgdist}{\mathrm{Unif}(\Omega)}
\newcommand{\eoverpi}{\mathbb{E}_{\pi,\omega}}
\newcommand{\wloo}{\omega_{-(i,j)}}
\newcommand{\omgloo}{\Omega_{-(i,j)}}
\newcommand{\pminus}{\mathbb{P}_{\pi,\omega_{i,j}^{m}=-1}}
\newcommand{\pplus}{\mathbb{P}_{\pi,\omega_{i,j}^{m}=1}}
\newcommand{\myalg}{\texttt{BaSEDB}\xspace}
\definecolor{cm}{RGB}{0,0,200}
\definecolor{yy}{RGB}{0,200,0}
\definecolor{rj}{RGB}{0,0,200}
\author[1]{Rong Jiang}
\author[2]{Cong Ma}
\affil[1]{Committee on Computational and Applied Mathematics, University of Chicago}
\affil[2]{Department of Statistics, University of Chicago}
\begin{document}
\title{Batched Nonparametric Contextual Bandits}

\date{}

\maketitle
\begin{abstract}

We study nonparametric contextual bandits under batch constraints,
where the expected reward for each action is modeled as a smooth function of covariates, 
and the policy updates are made at the end of each batch of observations. We establish a minimax regret lower bound for this setting and propose a novel batch learning algorithm that achieves the optimal regret (up to logarithmic factors). 
In essence, our procedure dynamically splits the covariate space into smaller bins, carefully aligning their widths with the batch size. Our theoretical results suggest that for nonparametric contextual bandits, a nearly constant number of policy updates can attain optimal regret in the fully online setting.
 
\end{abstract}


\section{Introduction}

Recent years have witnessed substantial progress in the field of sequential
decision making under uncertainty. Especially noteworthy are the advancements
in personalized decision making, where the decision maker uses side-information
to make customized decision for a user. The contextual bandit framework
has been widely adopted to model such problems because of its applicability
and elegance \cite{li2010contextual,tewari2017ads,bastani2020online}.
In this framework, one interacts with an environment for a number
of rounds: at each round, one is given a context, picks an action,
and receives a reward. One can update the action-assignment policy
based on previous observations and the goal is to maximize the expected
cumulative rewards. For example, in online news recommendation, a
recommendation algorithm selects an article for each newly arrived
user based on the user's contextual information, and observes whether
the user clicks the article or not. The goal is to try to maximize the number
of clicks received. Apart from news recommendation, contextual bandits
have found numerous applications in other fields such as clinical
trials, personalized medicine, and online advertising \cite{kim2011battle,zhou2019tumor,chapelle2014modeling}.

At the core of designing a contextual bandit algorithm is deciding
how to update the policy based on prior observations. A standard metric
of performance for bandit algorithms is regret, which is the
expected difference between the cumulative rewards obtained by an oracle
who knows the optimal action for every context and that obtained by
the actual algorithm under consideration. Many existing regret
optimal bandit algorithms require a policy update per observation
(unit) \cite{auer2002using,abbasi2011improved,perchet2013multi,lattimore2020bandit}.
At a first glance, such frequent policy updates are needed so that
the algorithm can quickly learn the optimal action under each context
and reduce regret. However, this kind of algorithm ignores an important
concern in the practice of sequential decision making---the batch
constraint. 

In many real world scenarios, the data often arrive in batches: the statistician can only observe the outcomes of the policy at the
end of a batch, and then decides what to do for the next batch. For
example, this batch constraint is ubiquitous in clinical trials: statisticians need to divide the participants into batches, determine
a treatment allocation policy before the batch starts, and then observe
all the outcomes at the end of the batch \cite{Robbins1952SomeAO}.
Policy updates are made per batch instead of per unit. In fact, it
is infeasible to apply unit-wise policy update in this case because
observing the effect of a treatment takes time and if one waits for
the result before deciding how to treat the next patient, the entire
experiment will take too long to complete when the number of participants
is huge. The batch constraint also appears in areas such as online
marketing, crowdsourcing, and simulations \cite{bertsimas2007learning,schwartz2017customer,kittur2008crowdsourcing,chick2009economic}.
Clearly, the batch constraint presents additional challenges to online
learning. Indeed, from an information perspective, the statistician's information
set is largely restricted since she can only observe all the responses
at the end of a batch. The following questions naturally arise: 

Given a batch budget $M$ and a total number of $T$ rounds, how
should the statistician determine the size of each batch, and how
should she update the policy after each batch? Can the statistician
design batch learning algorithms that achieve regret performances
on par with the fully online setting using as few policy updates as
possible?


\subsection{Main contributions}

In this work, we address the aforementioned questions under a classical
framework for personalized decision making---nonparametric contextual
bandits~\cite{rigollet2010nonparametric,perchet2013multi}. In this
framework, the expected reward associated with each treatment (or
arm in the language of bandits) is modeled as a nonparametric smooth function
of the covariates~\cite{yang2002nonp}. In the fully online setup,
seminal works \cite{rigollet2010nonparametric,perchet2013multi} establish
the minimax optimal regret bounds for the nonparametric contextual
bandits. Nevertheless, under the more challenging setting with the
batch constraint, the fundamental limits for nonparametric bandits
remain unknown. Our paper aims to bridge this gap. More concretely,
we make the following contributions:
\begin{itemize}
\item First, we establish a minimax regret lower bound for the nonparametric
bandits with the batch constraint. Our lower bound holds even when the batch size is adaptively chosen (based on the data observed in prior batches). The proof relies on a simple
but useful insight that the worst-case regret over the entire horizon
is greater than the worst-case regret over the first $i$ batches for any $1\le i\le M$. To exploit this insight, for each
different batch, we construct different families of hard
instances to target it, leading to a maximal regret over this
batch. 
\item In addition, we demonstrate that the aforementioned lower bound is
tight by providing a matching upper bound (up to log factors). Specifically,
we design a novel algorithm---Batched Successive Elimination with
Dynamic Binning (\myalg)---for the nonparametric bandits with batch constraints.
\myalg \ progressively splits the covariate space into smaller
bins whose widths are carefully selected to align well with the corresponding
batch size. The delicate interplay between the batch size and the
bin width is crucial for obtaining the optimal regret under the batch
setting. 
\item On the other hand, we show the suboptimality of static binning under the batch
constraint by proving an algorithm-specific lower bound. Unlike the
fully online setting where policies that use a fixed number of bins
can attain the optimal regret~\cite{perchet2013multi}, our lower bound indicates that batched successive elimination with static binning is strictly suboptimal.\footnote{In a certain regime the BSE policy from \cite{perchet2013multi} which uses a fixed number bins could loose by log factors compared to the optimal fully online regret. However, we will show the price of fixed binning is polynomial under the batch setting.} This highlights
the necessity of dynamic binning in some sense under the batch setting,
which is uncommon in classical nonparametric estimation. 

\item Last but not least, we demonstrate the challenge of adapting to the margin parameter in tha batch setting. Specifically, we show that when $M$ is small, the price of not knowing the true margin parameter for an algorithm is at least a polynomial increase in terms of the regret. 

\end{itemize}
It is also worth mentioning that
an immediate consequence of our results is that $M\gtrsim\log\log T$
number of batches suffices to achieve the optimal regret in the fully
online setting. In other words, we can use a nearly constant number
of policy updates in practice to achieve the optimal regret obtained
by policies that require one update per round. 

\subsection{Related work}

\paragraph{Nonparametric contextual bandits.} \cite{woodroofe1979one}
introduced the mathematical framework of contextual bandit. The theory
of contextual bandits in the fully online setting has been continuously
developed in the past few decades. On one hand, \cite{auer2002using,abbasi2011improved,goldenshluger2013linear,bastani2020online,bastani2021mostly,qian2023adaptive}
obtained learning guarantees for linear contextual bandits in both
low and high dimensional settings. On the other hand, \cite{yang2002nonp}
introduced the nonparametric approach to model the mean reward function.
\cite{rigollet2010nonparametric} proved a minimax lower bound on
the regret of nonparametric bandit and developed an upper-confidence-bound
(UCB) based policy to achieve a near-optimal rate. \cite{perchet2013multi}
improved this result and proposed the Adaptively Binned Successive
Elimination (ABSE) policy that can also adapt to the unknown margin
parameter. Further insights in this nonparametric setting were developed
in subsequent works \cite{qian2016kernel,qian2016random,reeve2018k,guan2018nonparametric,hu2022smooth,suk2021self,gur2022smoothness,cai2022transfer,suk2023tracking,blanchard2023non}.
The smoothness assumption is also adopted in another line of work
\cite{lu2009showing,locatelli2018adaptivity,krishnamurthy2020contextual,cai2022stochastic}
on the continuum-armed bandit problems. However in contrast to what
we study, the reward is assumed to be a Lipschitz function of the
action, and the covariates are not taken into considerations. 

\paragraph{Batch learning.} The batch constraint has received increasing
attention in recent years. \cite{perchet2016batch,zijun2020batch}
considered the multi-armed bandit problem under the batch setting
and showed that $O(\log\log T)$ batches are adequate in achieving
the rate-optimal regret, compared to the fully online setting. \cite{han2020sequential,ren2020batched}
extended batch learning to the (generalized) linear contextual bandits
and \cite{ren2023dynamic,wang2020online,fan2023provably} further
studied the setting with high-dimensional covariates. \cite{karbasi2021parallelizing,kalkanli2021batched}
established batch learning guarantees for the Thompson sampling algorithm.
\cite{feng2022lipschitz} considered Lipschitz continuum-armed bandit
problem with the batch constraint. Inference for batched bandits was considered in~\cite{zhang2020inference}. A concept related to batch learning
in literature is called delayed feedback \cite{chapelle2011empirical,chapelle2014modeling,vernade2017stochastic,gael2020stochastic}.
These works consider the setting where rewards are observed with delay
and analyze effects of delay on the regret. \cite{kock2017optimal,arya2020randomized}
studied delayed feedback in nonparametric bandits and the key difference
to batch learning is that the batch size is given, whereas in our
case, it is a design choice by the statistician. Batch learning's
focus is different to that of delayed feedback in the sense that the
former gives the decision maker discretion to choose the batch size
which makes it possible to approximate the optimal standard online
regret with a small number of batches. The asymptotic regret of batch learning has been studied in \cite{tianyuan2021anytime, tianyuan2021double} 
and it is shown that the asymptotic optimality can be achieved with a constant number of batches for multi-armed bandits when the time horizon is known.

\paragraph{Switching cost.}
The notion switching
cost is intimately related to the batch constraint. \cite{cesa2013online}
studied online learning with low switching cost and obtained the rate-optimal regret with $O(\log\log T)$ switches. \cite{ruan2021linear} established the minimax optimal regret versus policy switch tradeoff in linear bandits with adverserial and stochastic contexts. \cite{bai2019provably,zhang2020almost,gao2021provably,wang2021provably,qiao2022sample}
developed regret guarantees with low switching cost for reinforcement
learning. \cite{tao2019collaborative} examined collaborative best arm identification under limited interaction, achieving nearly tight tradeoffs between the number of rounds and speedup in multi-armed bandits. Low switching cost can be interpreted as infrequent policy
updates, but it does not necessarily require the learner to divide the samples
into batches with feedback only becoming available at the end of a
batch. 

\section{Problem setup}

We begin by introducing the problem setup for nonparametric bandits
with the batch constraint.

A two-arm nonparametric bandit with horizon $T\geq1$ is specified
by a sequence of independent and identically distributed random vectors
\begin{equation}
(X_{t},Y_{t}^{(1)},Y_{t}^{(-1)}),\qquad\text{for }t=1,2,\ldots,T,\label{eq:bandit}
\end{equation}
where $X_{t}$ is sampled from a distribution $P_{X}$. Throughout
the paper, we assume that $X_{t}\in\mathcal{X}\coloneqq[0,1]^{d}$,
and $P_{X}$ has a density (w.r.t.~the Lebesgue measure) that is
bounded below and above by some constants $\underline{c},\bar{c}>0$,
respectively. For $k\in\{1,-1\}$ and $t\geq1$, we assume that $Y_{t}^{(k)}\in[0,1]$
and that 
\[
\mathbb{E}[Y_{t}^{(k)}\mid X_{t}]=f^{(k)}(X_{t}).
\]
Here $f^{(k)}$ is the unknown mean reward function for the arm $k$.

Without the batch constraint, the game of nonparametric bandits plays
sequentially. At each step $t$, the statistician observes the context
$X_{t}$, and pulls an action $A_{t}\in\{1,-1\}$ according to a rule
$\pi_{t}:\mathcal{X}\mapsto\{1,-1\}$. Then she receives the corresponding
reward $Y_{t}^{(A_{t})}$. In this case, the rule $\pi_{t}$ for selecting
the action at time $t$ is allowed to depend on all the observations
strictly anterior to $t$.

In an $M$-batch game, the statistician needs to design  an $M$-batch policy $(\Gamma,\pi)$, where $\Gamma=\{t_{0,}t_{1,...,}t_{M}\}$
is a partition of the entire time horizon $T$ that satisfies $0=t_{0}<t_{1}<...<t_{M-1}<t_{M}=T$,
and $\pi=\{\pi_{t}\}_{t=1}^{T}$ is a sequence of random functions
$\pi_{t}:\mathcal{X}\mapsto\{1,-1\}$. The grid $\Gamma$ can be chosen adaptively, meaning that the statistician can use all information up to $t_{i-1}$ to determine $t_i$. More specifically, prior to the start of the game, she will specify the first batch $t_1$, and at the end of $t_1$, she will use all observations she have to decide the next batch $t_2$, and this process repeats in batches.
In contrast to the case without the batch
constraint, only the rewards associated with timesteps prior to the
current batch are observed and available for making decisions for
the current batch. 
Specifically, let $\Gamma(t)$ be the batch
index for the time $t$, i.e., $\Gamma(t)$ is the unique integer
such that $t_{\Gamma(t)-1}<t\le t_{\Gamma(t)}$. Then at time $t$,
the available information for $\pi_{t}$ is only $\{X_{l}\}_{l=1}^{t}\cup\{Y_{l}^{(A_{l})}\}_{l=1}^{\Gamma(t)-1}$,
which we denote by $\mathcal{F}^{t}$. The statistician's policy $\pi_{t}$
at time $t$ is allowed to depend on $\mathcal{F}_{t}$. 

The goal of the statistician is to design an $M$-batch policy $(\Gamma,\pi)$
that can compete with an oracle that has perfect knowledge (i.e.,
the law of $(X_{t},Y_{t}^{(1)},Y_{t}^{(-1)})$) of the environment.
Formally, we define the expected cumulative regret as 
\begin{equation}
R_{T}(\pi)\coloneqq\mathbb{E}\left[\sum_{t=1}^{T}\left(f^{\star}(X_{t})-f^{(\pi_{t}(X_{t}))}(X_{t})\right)\right],\label{eq:regret}
\end{equation}
where $f^{\star}(x)\coloneqq\max_{k\in\{1,-1\}}f^{(k)}(x)$ is the
maximum mean reward one could obtain on the context $x$. Note here
we omit the dependence on $\Gamma$ for simplicity. 

\subsection{Assumptions}

We adopt two standard assumptions in the nonparametric bandits literature
\cite{rigollet2010nonparametric,perchet2013multi}. The first assumption
is on the smoothness of the mean reward functions.

\begin{assumption}[Smoothness]\label{assumption:smoothness}We assume
that the reward function for each arm is $(\beta,L)$-smooth, that
is, there exist $\beta\in(0,1]$ and $L>0$ such that for $k\in\{1,-1\}$,
\[
|f^{(k)}(x)-f^{(k)}(x')|\leq L\|x-x'\|_{2}^{\beta}
\]
holds for all $x,x'\in\mathcal{X}$. \end{assumption}

The second assumption is about the separation between the two reward
functions.

\begin{assumption}[Margin]\label{assumption:margin}We assume that
the reward functions satisfy the margin condition with parameter $\alpha>0$,
that is there exist $\delta_{0}\in(0,1)$ and $D_{0}>0$ such that
\[
\mathbb{P}_{X}\left(0<\left|f^{(1)}(X)-f^{(-1)}(X)\right|\leq\delta\right)\leq D_{0}\delta^{\alpha}
\]
holds for all $\delta\in[0,\delta_{0}]$. \end{assumption}

Assumption~\ref{assumption:margin} is related to the margin condition
in classification \cite{mammen1999smooth,tsybakov2004optimal,audibert2007fast}
and is introduced to bandits in \cite{Goldenshluger_2009,rigollet2010nonparametric,perchet2013multi}.
The margin parameter affects the complexity of the problem. Intuitively,
a small $\alpha$, say $\alpha\approx0$, means the two mean functions
are entangled with each other in many regions and hence it is challenging
to distinguish them; a large $\alpha$, on the other hand, means the
two reward functions are mostly well-separated.

From now on, we use $\mathcal{F}(\alpha,\beta)$ to denote the class
of nonparametric bandit instances (i.e., distributions over (\ref{eq:bandit}))
that satisfy Assumptions~\ref{assumption:smoothness}-\ref{assumption:margin}. 

\begin{remark} 
Throughout the paper, we assume that $\alpha\beta\leq1$.
By proposition 2.1 from \cite{rigollet2010nonparametric}, when $\alpha\beta>1$, the problem reduces to a static multi-armed bandit where one arm is always optimal, regardless of the context. Since our focus is on contextual bandits, we consider the case $\alpha\beta\le1$ hereafter.


\end{remark}

\section{Fundamental limits of batched nonparametric bandits}

In this section, we establish minimax lower bounds for the regret achievable by any $M$-batch policy $(\Gamma,\pi)$; see Theorem~\ref{thm:lower-bound-adaptive}.
To begin with, we state a minimax lower bound, together with its proof, when the grid $\Gamma$ is prespecified, that is, the statistician divides the horizon
$[1:T]$ into $M$ disjoint batches $[1:t_{1}]$, $[t_{1}+1:t_{2}]$,
$\ldots,[t_{M-1}+1,T]$ before the game begins; see Theorem~\ref{thm:lower-bound}. As we will soon see, the proof of the lower bound with fixed grid is not only useful for establishing the lower bound for any general $M$-batch
policy $(\Gamma,\pi)$, but also instrumental in our development of an optimal policy to be detailed in Section~\ref{sec:algo}. 


Recall that $\mathcal{F}(\alpha,\beta)$ denotes the class of nonparametric
bandit instances (i.e., distributions over (\ref{eq:bandit})) that
obey Assumptions~\ref{assumption:smoothness}-\ref{assumption:margin}.
Compared to multi-armed bandits without covariates, the presence of contexts and the smooth function class no longer allows one to simply decompose the regret as the product between the expected number of times a suboptimal arm is pulled and the suboptimality gap. A more nuanced notion of expected number of suboptimal arm pulls for bandits with covariates was introduced in \cite{rigollet2010nonparametric}, and its relation to the regret was characterized via an elegant inequality. Nevertheless, a direct application of this relation to the batched setting would lead to suboptimal rates. To achieve a sharp result, we need to construct different families of hard instances for each batch and carefully lower bound the regret within it.
We have the following minimax lower bound for any $M$-batch policy with a fixed grid,
in which we define 
\[
\gamma\coloneqq\frac{\beta(1+\alpha)}{2\beta+d}\in(0,1).
\]

\begin{theorem}\label{thm:lower-bound}

Suppose that $\alpha\beta\le1$, and assume that $P_{X}$ is the uniform
distribution on $\mathcal{X}=[0,1]^{d}$. For any $M$-batch policy
$(\Gamma,\pi)$ where $\Gamma$ is prespecified, there exists a nonparametric bandit instance in $\mathcal{F}(\alpha,\beta)$
such that the regret of $(\Gamma,\pi)$ on this instance is lower
bounded by 
\[
R_{T}(\pi)
\ge\tilde{D}T^{\frac{1-\gamma}{1-\gamma^{M}}},
\]
where $\tilde{D}>0$ is a constant independent of $T$ and $M$.

\end{theorem}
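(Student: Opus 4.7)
The plan is to prove the lower bound via an Assouad-type reduction applied separately to each of the $M$ batches, followed by a balancing argument over the prespecified grid $\Gamma$. For each batch index $i\in\{1,\dots,M\}$, I would construct a family $\mathcal{F}_{i}\subset\mathcal{F}(\alpha,\beta)$ of hard instances targeting batch $i$ specifically. Each instance in $\mathcal{F}_{i}$ is obtained by carving out $m_{i}$ well-separated balls of radius $r_{i}$ inside $[0,1]^{d}$, placing a $\pm 1$-signed bump of height $h_{i}\asymp L r_{i}^{\beta}$ on each ball (built, e.g., from tent functions of the form $(r_{i}-\|x-c_{i,j}\|_{2})_{+}^{\beta}$ so that $(\beta,L)$-smoothness is preserved), and keeping $f^{(1)}\equiv f^{(-1)}\equiv 1/2$ on the complement. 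Taking $m_{i}\asymp r_{i}^{\alpha\beta-d}$ saturates the margin condition with exponent $\alpha$, while the sign pattern $\omega\in\{-1,+1\}^{m_{i}}$ parametrizes the family. The crucial design choice is the radius: for $i\geq 2$ I set $r_{i}\asymp t_{i-1}^{-1/(d+2\beta)}$, calibrated so that the $t_{i-1}$ prior observations convey barely any information about each bump's sign, while for $i=1$ I take $r_{1}$ to be a sufficiently small constant, since no prior observation exists and within-batch adaptation is forbidden.

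Next, for an arbitrary $M$-batch policy $(\Gamma,\pi)$ I would lower bound its expected regret during batch $i$ averaged over the uniform prior on $\omega$. Since every action on batch $i$ is $\mathcal{F}^{t_{i-1}}$-measurable, the laws of $\mathcal{F}^{t_{i-1}}$ under two sign-vectors differing only in coordinate $j$ have $\KL$ divergence at most $t_{i-1}\, r_{i}^{d}\, h_{i}^{2}\lesssim t_{i-1}\, r_{i}^{d+2\beta}$, which is $O(1)$ by the choice of $r_{i}$. Pinsker's inequality (equivalently, the two-point Le Cam bound) then forces the expected indicator that the policy picks the wrong arm inside bump $j$ during batch $i$ to remain bounded below by a universal constant. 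Multiplying by the per-round suboptimality $h_{i}$ and the expected number $n_{i} r_{i}^{d}$ of covariates falling into bump $j$, and summing over the $m_{i}$ bumps, gives an expected batch-$i$ regret of order $m_{i} n_{i} r_{i}^{d} h_{i}\gtrsim n_{i}\, r_{i}^{\beta(1+\alpha)}$. Plugging in the chosen $r_{i}$ yields the per-batch lower bounds $R_{i}(\Gamma)\gtrsim t_{i-1}^{-\gamma}\, n_{i}$ for $i\geq 2$ and $R_{1}(\Gamma)\gtrsim n_{1}=t_{1}$.

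Finally, since the cumulative regret is at least the regret on any single batch, $\sup_{\mathcal{F}(\alpha,\beta)} R_{T}(\pi) \geq R\coloneqq\max_{1\leq i\leq M} R_{i}(\Gamma)$. The constraints $t_{1}\lesssim R$ and $t_{i}-t_{i-1}\lesssim R\, t_{i-1}^{\gamma}$ unfold by induction to $t_{i}\lesssim R^{(1-\gamma^{i})/(1-\gamma)}$ (using $\gamma<1$), so the horizon constraint $t_{M}=T$ forces $R\gtrsim T^{(1-\gamma)/(1-\gamma^{M})}$, which is the claimed bound. The main technical obstacle I anticipate lies in the second step: constructing bump profiles that simultaneously enforce $(\beta,L)$-smoothness, satisfy the margin condition uniformly over all $\omega$, and keep $f^{(\pm 1)}$ in $[0,1]$, and then rigorously carrying out the coordinate-by-coordinate Assouad/Le Cam computation under the batched filtration $\mathcal{F}^{t_{i-1}}$ while controlling the KL via the expected number of prior samples inside each bump (which itself is policy-dependent and must be bounded in the supremum over $\mathcal{F}_{i}$). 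Some extra care is also needed at the $i=1$ boundary, where $t_{0}=0$ makes the indistinguishability argument vacuous and one instead exploits the inability of the learner to adapt within the first batch, producing a constant-radius fallback construction whose regret is $\Theta(n_{1})$.
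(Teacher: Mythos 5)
Your overall architecture coincides with the paper's: for each batch $i$ you build a Rigollet--Zeevi-type family of $\pm1$-signed bumps with radius $r_i \asymp t_{i-1}^{-1/(2\beta+d)}$ and $m_i\asymp r_i^{\alpha\beta-d}$ perturbed cells (the paper's $z_i=\lceil t_{i-1}^{1/(2\beta+d)}\rceil$ bins with $s=\lceil z_i^{d-\alpha\beta}\rceil$ bumps), you bound the KL divergence of the batched information set by $t_{i-1}r_i^{2\beta+d}=O(1)$ exactly as in the paper's Lemma~\ref{lemma:kl-neg-pos}, and you finish with the same max-over-batches balancing. The one step you do differently is the conversion from testing error to regret: the paper lower-bounds the \emph{inferior sampling rate} $S_{t_i}$ (the count of strictly suboptimal pulls) and then inverts it into a regret bound via Lemma~\ref{inferior-to-regret}, whereas you keep the gap $\varphi_j(x)$ as a weight inside the Assouad sum and read off the regret directly. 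Your route is viable and dispenses with the auxiliary lemma, but the phrase ``multiplying by the per-round suboptimality $h_i$'' is too loose as stated: the policy may choose to err only where $\varphi_j$ is nearly zero, so a constant misclassification probability times $h_i$ is not a valid bound. The correct execution applies the two-point Le Cam bound pointwise in $x$ (using that $X_t$ is independent of the pre-batch data, and that for each fixed $x$ exactly one of the two signs makes $\pi_t(x)$ wrong), which gives expected instantaneous regret at $x$ of at least $\tfrac12 e^{-\KL}\varphi_j(x)$ after averaging over the sign, and then integrates over the bump; this recovers your claimed order $m_i n_i r_i^d h_i$ because $\int_{C_j}\varphi_j\,\mathrm{d}P_X\asymp h_i r_i^d$, so the conclusion survives.

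There is one quantitative gap worth fixing. Because you only credit the regret incurred \emph{during} batch $i$, your per-batch bound is $(t_i-t_{i-1})t_{i-1}^{-\gamma}\lesssim R$, an additive recursion $t_i\le t_{i-1}+O(R\,t_{i-1}^{\gamma})$; unrolling it multiplies the leading constant by roughly a factor of $2$ per step, so the resulting $\tilde D$ decays like $2^{-cM}$ and is \emph{not} independent of $M$ as the theorem asserts (harmless when $M\lesssim\log\log T$, but not in general). The paper sidesteps this because its batch-$i$ family also forces inferior pulls throughout all earlier batches $l\le i$ --- the KL up to $t_{l-1}\le t_{i-1}$ is still $O(1)$ --- yielding $S_{t_i}\gtrsim t_i z_i^{-\alpha\beta}$ rather than $(t_i-t_{i-1})z_i^{-\alpha\beta}$ and hence the multiplicative recursion $t_i\lesssim R\,t_{i-1}^{\gamma}$ with no constant accumulation. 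The identical fix works in your direct argument: sum the weighted Assouad bound over all $t\le t_i$ instead of only over batch $i$.
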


\noindent Later, we will show that this lower bound is attained (modulo log factors) by our proposed algorithm Batched Successive Elimination with
Dynamic Binning (\myalg). Hence $T^{\frac{1-\gamma}{1-\gamma^{M}}}$ is the minimax optimal regret for the batched nonparametric bandit problem. 
See Section~\ref{subsec:lower-proof} for the proof of
this lower bound. 

\medskip

As a sanity check, one sees that as $M$ increases, the lower bound
    decreases. This is intuitive, as the policy is more powerful as $M$
    increases. As a result, the problem of batched nonparametric bandits
    becomes easier. When $M=\Omega(\log\log T)$, we recover the minimax regret $\Theta(T^{1-\gamma})$ in the fully online setting.

\subsection{Proof of Theorem~\ref{thm:lower-bound}\label{subsec:lower-proof}}

Let $(\Gamma,\pi)$ be the $M$-batch policy under consideration,
with 
\[
\Gamma=\{t_{0}=0,t_{1},t_{2},\ldots,t_{M}=T\}.
\]
Throughout this proof, we consider Bernoulli reward distributions,
that is $Y_{t}^{(1)},Y_{t}^{(-1)}$ are Bernoulli random variables
with mean $f^{(1)}(X_{t})$, and $f^{(-1)}(X_{t})$, respectively.
In addition, we fix $f^{(-1)}(x)=\frac{1}{2}$. Let $f$ be the mean
reward function of the first arm. To make the dependence on the reward
instance clear, we write the cumulative regret up to time $n$ as
$R_{n}(\pi;f)$. 

Our proof relies on a simple observation: the worst-case regret over
$[T]$ is larger than the worst-case regret over the first $i$ batches.
Formally, we have
\begin{equation}
\sup_{(f,\frac{1}{2})\in\mathcal{F}(\alpha,\beta)}R_{T}(\pi;f)\geq\max_{1\leq i\leq M}\sup_{(f,\frac{1}{2})\in\mathcal{F}(\alpha,\beta)}R_{t_{i}}(\pi;f).\label{eq:key-ineq}
\end{equation}
Though simple, this observation lends us freedom on choosing different
families of instances in $\mathcal{F}(\alpha,\beta)$ targeting different
batch indices $i$. 

Our proof consists of four steps. In Step 1, we reduce bounding the
regret of a policy to lower bounding its inferior sampling rate to
be defined. In Step 2, we detail the choice of different families
of instances for each different batch index $i$. Then in Step 3,
we apply an Assouad-type of argument to lower bound the average inferior
sampling rate of the family of hard instances. Lastly in Step 4, we
combine the arguments to complete the proof. 

\paragraph{Step 1: Relating regret to inferior sampling rate.}

Given an $M$-batch policy, we define its inferior sampling rate at
time $n$ on an instance $(f,\frac{1}{2})$ to be
\[
S_{n}(\pi;f)\coloneqq\mathbb{E}\left[\sum_{t=1}^{n}1\{\pi_{t}(X_{t})\neq\pi^{\star}(X_{t}),f(X_{t})\neq\frac{1}{2}\}\right].
\]
In words, $S_{n}(\pi;f)$ counts the number of times $\pi$ selects
the strictly suboptimal arm up to time $n$. Thanks to the following
lemma, we can reduce lower bounding the regret to the inferior sampling
rate. 

\begin{lemma}[Lemma 3.1 in~\cite{rigollet2010nonparametric}]Suppose
that $(f,\frac{1}{2})\in\mathcal{F}(\alpha,\beta)$. Then for any
$1\leq n\leq T$, we have 
\[
S_{n}(\pi;f)\leq Dn^{\frac{1}{1+\alpha}}R_{n}(\pi;f)^{\frac{\alpha}{1+\alpha}},
\]
for some constant $D>0$.\label{inferior-to-regret}\end{lemma}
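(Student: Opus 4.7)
The plan is to bound the inferior sampling rate by splitting the suboptimality indicator according to the magnitude of the instantaneous gap $|f(X_t)-1/2|$ against a tunable threshold $\delta$, controlling the small-gap portion through the margin condition and the large-gap portion through the regret, and then optimizing $\delta$.

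First, I would rewrite both quantities in a common form. Because $f^{(-1)}\equiv 1/2$ and $f^{(1)}=f$, on the event $\{\pi_t(X_t)\neq\pi^{\star}(X_t)\}$ we have $f^{\star}(X_t)-f^{(\pi_t(X_t))}(X_t)=|f(X_t)-1/2|$, so
\[
R_n(\pi;f)=\mathbb{E}\Big[\sum_{t=1}^{n}|f(X_t)-1/2|\,\mathbf{1}\{\pi_t(X_t)\neq\pi^{\star}(X_t)\}\Big],
\]
while $S_n(\pi;f)$ is the expectation of the same sum of indicators but with $|f(X_t)-1/2|$ replaced by $\mathbf{1}\{f(X_t)\neq 1/2\}$. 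This puts both objects in terms of the same ``wrong-arm'' indicator, which is what allows them to be compared.

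Next, fix a threshold $\delta\in(0,\delta_0]$ and decompose each summand in $S_n$ according to whether $|f(X_t)-1/2|\le \delta$ or $>\delta$. The small-gap piece is bounded directly by Assumption~\ref{assumption:margin}: since the $X_t$ are i.i.d., its expectation is at most $n D_0 \delta^{\alpha}$. For the large-gap piece, I use the pointwise domination $\mathbf{1}\{|f(X_t)-1/2|>\delta\}\le |f(X_t)-1/2|/\delta$, which replaces the indicator by the instantaneous regret scaled by $1/\delta$, yielding at most $R_n(\pi;f)/\delta$. Combining,
\[
S_n(\pi;f)\le n D_0 \delta^{\alpha}+\frac{R_n(\pi;f)}{\delta}.
\]

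Finally, I would minimize the right-hand side in $\delta$. Balancing the two terms gives $\delta\asymp (R_n(\pi;f)/n)^{1/(1+\alpha)}$, and substituting back produces the desired inequality $S_n(\pi;f)\le D\,n^{1/(1+\alpha)}R_n(\pi;f)^{\alpha/(1+\alpha)}$ with $D$ depending only on $\alpha$ and $D_0$. The only minor technicality is the constraint $\delta\le\delta_0$: when the unconstrained optimizer exceeds $\delta_0$ we must have $R_n(\pi;f)\gtrsim n$, in which case the trivial bound $S_n(\pi;f)\le n$ absorbs the boundary case after enlarging $D$. Once both quantities are expressed through the same indicator $\mathbf{1}\{\pi_t(X_t)\neq\pi^{\star}(X_t)\}$, the argument reduces to a one-line threshold-and-optimize computation, so I do not expect any real obstacle beyond keeping the constants consistent.
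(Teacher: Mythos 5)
Your proof is correct and is essentially the standard threshold-and-optimize argument for this inequality; the paper itself does not reprove the lemma but simply imports it as Lemma~3.1 of the cited reference, whose proof proceeds exactly as you describe (split on $|f(X_t)-1/2|\lessgtr\delta$, apply the margin condition to the small-gap part, dominate the large-gap indicator by $|f(X_t)-1/2|/\delta$, optimize $\delta$). Your treatment of the boundary constraint $\delta\le\delta_0$ via the trivial bound $S_n(\pi;f)\le n$ correctly closes the one remaining edge case.
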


As an immediate consequence of the above lemma, we obtain 
\begin{align*}
\sup_{(f,\frac{1}{2})\in\mathcal{F}(\alpha,\beta)}R_{T}(\pi;f) & \geq\max_{1\leq i\leq M}\sup_{(f,\frac{1}{2})\in\mathcal{F}(\alpha,\beta)}(\frac{1}{D})^{\frac{1+\alpha}{\alpha}}t_{i}^{-\frac{1}{\alpha}}(S_{t_{i}}(\pi;f))^{\frac{1+\alpha}{\alpha}}\\
 & =(\frac{1}{D})^{\frac{1+\alpha}{\alpha}}\max_{1\leq i\leq M}t_{i}^{-\frac{1}{\alpha}}\left[\sup_{(f,\frac{1}{2})\in\mathcal{F}(\alpha,\beta)}S_{t_{i}}(\pi;f)\right]^{\frac{1+\alpha}{\alpha}}.
\end{align*}
 From now on, we focus on lower bounding $\sup_{(f,\frac{1}{2})\in\mathcal{F}(\alpha,\beta)}S_{t_{i}}(\pi;f)$. 

\paragraph{Step 2: Introducing the family of reward instances for
$t_{i}$. }

Our construction of the family of hard instances is adapted from~\cite{rigollet2010nonparametric}.
Define $z_{1}=1$, and $z_{i}=\lceil t_{i-1}{}^{1/(2\beta+d)}\rceil$
for $i=2,3,\ldots,M$. Henceforth, we will fix some $i$ and write
$z_{i}$ as $z$. We partition $[0,1]^{d}$ into $z^{d}$ bins with
equal width. Denote the bins by $C_{j}$ for $j=1,...,z^{d}$, and
let $q_{j}$ be the center of $C_{j}$. 

Define a set of binary sequences $\Omega_{s}\coloneqq\{\pm1\}^{s}$,
with $s\coloneqq\lceil z^{d-\alpha\beta}\rceil$. For each $\omega\in\Omega_{s}$
we define a function $f_{\omega}:[0,1]^{d}\mapsto\mathbb{R}$:
\[
f_{\omega}(x)=\frac{1}{2}+\sum_{j=1}^{s}\omega_{j}\varphi_{j}(x),
\]
where $\varphi_{j}(x)=D_{\phi}z^{-\beta}\phi(2z(x-q_{j}))\mathbf{1}\{x\in C_{j}\}$
with $\phi(x)=(1-\|x\|_{\infty})^{\beta}\mathbf{1}\{\|x\|_{\infty}\le1\}$,
and $D_{\phi}=\min(2^{-\beta}L,1/4)$. In all, we consider the family
of reward instances
\begin{equation}\label{eq:instance_family}
    \mathcal{C}_{z}\coloneqq\left\{ f^{(1)}(x)=f_{\omega}(x),f^{(-1)}(x)=\frac{1}{2}\mid\omega\in\Omega_{s}\right\} .
\end{equation}
With slight abuse of notation, we also use $\mathcal{C}_{z}$ to denote
$\{f_{\omega}:\omega\in\Omega_{s}\}$. It is straightforward to check
that $\mathcal{C}_{z}\subseteq\mathcal{F}(\alpha,\beta).$

\paragraph{Step 3: Lower bounding the inferior sampling rate.}Fix
some $i\in[M]$, and consider $z=z_{i}$. Since $\mathcal{C}_{z}\subseteq\mathcal{F}(\alpha,\beta)$,
we have 
\[
\sup_{(f,\frac{1}{2})\in\mathcal{F}(\alpha,\beta)}S_{t_{i}}(\pi;f)\geq\sup_{f\in\mathcal{C}_{z}}S_{t_{i}}(\pi;f).
\]
Using the definitions of $\mathcal{C}_{z}$ and $S_{t_{i}}(\pi;f)$,
we have 

\begin{align*}
\sup_{f\in\mathcal{C}_{z}}S_{t_{i}}(\pi;f) & =\sup_{\omega\in\Omega_{s}}\mathbb{E}_{\pi,f_{\omega}}\left[\sum_{t=1}^{t_{i}}\mathbf{1}\{\pi_{t}(X_{t})\neq\textrm{sign}(f_{\omega}(X_{t})-\frac{1}{2}),f_{\omega}(X_{t})\neq\frac{1}{2}\}\right]\\
 & \ge\frac{1}{2^{s}}\sum_{\omega\in\Omega_{s}}\mathbb{E}_{\pi,f_{\omega}}\left[\sum_{t=1}^{t_{i}}\mathbf{1}\{\pi_{t}(X_{t})\neq\textrm{sign}(f_{\omega}(X_{t})-\frac{1}{2}),f_{\omega}(X_{t})\neq\frac{1}{2}\}\right].
\end{align*}
Since $f_{\omega}(x)=\frac{1}{2}$ for $x\notin\cup_{j=1,\ldots s}C_{j}$,
we further obtain

\begin{align}
\sup_{f\in\mathcal{C}_{z}}S_{t_{i}}(\pi;f) & \geq\frac{1}{2^{s}}\sum_{\omega\in\Omega_{s}}\sum_{t=1}^{t_{i}}\sum_{j=1}^{s}\mathbb{E}_{\pi,f_{\omega}}^{t}\left[\mathbf{1}\{\pi_{t}(X_{t})\neq\omega_{j},X_{t}\in C_{j}\}\right].\label{eq:avg-inferior}
\end{align}
Here we use $\mathbb{P}_{\pi,f_{\omega}}^{t}$ to denote the joint
distribution of $\{X_{l}\}_{l=1}^{t}\cup\{Y_{l}^{\pi_{l}(X_{l})}\}_{l=1}^{\Gamma(t)-1}$
, where $\Gamma(t)$ is the batch index for $t$, i.e., the unique
integer such that $t_{\Gamma(t)-1}<t\leq t_{\Gamma(t)}$. We use $\mathbb{E}_{\pi,f_{\omega}}^{t}$
to denote the corresponding expectation. Expand the right hand side
of~(\ref{eq:avg-inferior}) to see that 
\begin{equation}
\sup_{f\in\mathcal{C}_{z}}S_{t_{i}}(\pi;f)\geq\frac{1}{2^{s}}\sum_{j=1}^{s}\sum_{t=1}^{t_{i}}\sum_{\omega_{[-j]}\in\Omega_{s-1}}\underbrace{\sum_{h\in\{\pm1\}}\mathbb{E}_{\pi,f_{\omega_{[-j]}^{h}}}^{t}[\mathbf{1}\{\pi_{t}(X_{t})\neq h,X_{t}\in C_{j}\}]}_{W_{j,t,\omega_{[-j]}}},\label{eq:assouad}
\end{equation}
where $\omega_{[-j]}^{h}$ is the same as $\omega$ except for the
$j$-th entry being $h$. Note that here we use the fact that for
$f_{\omega_{[-j]}^{h}}$, the optimal arm in the bin $C_{j}$ is $h$.
We then relate $W_{j,t,\omega_{[-j]}}$ to a binary testing error,
\begin{align}
W_{j,t,\omega_{[-j]}} & =\frac{1}{z^{d}}\sum_{h\in\{\pm1\}}\mathbb{P}_{\pi,f_{\omega_{[-j]}^{h}}}^{t}(\pi_{t}(X_{t})\neq h\mid X_{t}\in C_{j})\nonumber \\
 & \ge\frac{1}{4z^{d}}\exp\left[-\mathrm{KL}(\mathbb{P}_{\pi,f_{\omega_{[-j]}^{-1}}}^{t},\mathbb{P}_{\pi,f_{\omega_{[-j]}^{1}}}^{t})\right],\label{eq:le-cam}
\end{align}
where the second step invokes Le Cam's method. Under the batch setting,
at time $t$, the available information is only up to $t_{\Gamma(t)-1}$.
Consequently, we can apply Lemma~\ref{lemma:kl-neg-pos} to obtain

\begin{align}\label{eq:kl-bound}
\KL(\mathbb{P}_{\pi,f_{\omega_{[-j]}^{-1}}}^{t},\mathbb{P}_{\pi,f_{\omega_{[-j]}^{1}}}^{t})
=\KL(\mathbb{P}_{\pi,f_{\omega_{[-j]}^{-1}}}^{t_{\Gamma(t)-1}},\mathbb{P}_{\pi,f_{\omega_{[-j]}^{1}}}^{t_{\Gamma(t)-1}})\le 2z^{-(2\beta+d)}t_{\Gamma(t)-1}.
\end{align}
Combining (\ref{eq:assouad}), (\ref{eq:le-cam}),
and (\ref{eq:kl-bound}), we arrive at
\begin{align*}
\sup_{f\in\mathcal{C}_{z}}S_{t_{i}}(\pi;f) & \ge\frac{1}{8}\sum_{j=1}^{s}\sum_{t=1}^{t_{i}}\frac{1}{z^{d}}\exp\left(-2z^{-(2\beta+d)}t_{\Gamma(t)-1}\right)\\
 & \geq\frac{1}{8}\sum_{j=1}^{z^{d-\alpha\beta}}\sum_{l=1}^{i}\frac{t_{l}-t_{l-1}}{z^{d}}\exp\left(-2z^{-(2\beta+d)}t_{l-1}\right)\\
 & \geq\frac{1}{8}\sum_{j=1}^{z^{d-\alpha\beta}}\sum_{l=1}^{i}\frac{t_{l}-t_{l-1}}{z^{d}}\exp\left(-2z^{-(2\beta+d)}t_{i-1}\right),
\end{align*}
where the second line uses the fact that $s=\lceil z^{d-\alpha\beta}\rceil$,
and the last inequality holds since $t_{l-1}\le t_{i-1}$ for all
$1\le l\le i$. Now recall that $z=z_{i}=\lceil(t_{i-1}){}^{1/(2\beta+d)}\rceil$
for $i\geq1$, and $z=1$ for $i=1$. We can continue the lower bound
to see that 
\begin{align*}
\sup_{f\in\mathcal{C}_{z_{i}}}S_{t_{i}}(\pi;f) & \ge\frac{1}{8}\sum_{j=1}^{z^{d-\alpha\beta}}\sum_{l=1}^{i}\frac{t_{l}-t_{l-1}}{z^{d}}\exp\left(-2z^{-(2\beta+d)}t_{i-1}\right)\\
 & \ge c^{\star}\sum_{j=1}^{z^{d-\alpha\beta}}\sum_{l=1}^{i}\frac{t_{l}-t_{l-1}}{z^{d}}\\
 & =c^{\star}\cdot\frac{t_{i}}{z^{\alpha\beta}}=\begin{cases}
c^{\star}\cdot\frac{t_{i}}{t_{i-1}^{\frac{\alpha\beta}{2\beta+d}}}, & i>1\\
c^{\star}t_{1}, & i=1
\end{cases},
\end{align*}
 for some $c^{\star}>0$. 

\paragraph{Step 4: Combining bounds together.}Combining the previous
arguments together leads to the conclusion that 
\begin{align}
\sup_{(f,\frac{1}{2})\in\mathcal{F}(\alpha,\beta)}R_{T}(\pi;f) & \geq\max_{1\leq i\leq M}\sup_{f\in\mathcal{C}_{z_{i}}}R_{t_{i}}(\pi;f)\nonumber \\
 & \geq(\frac{1}{D})^{\frac{1+\alpha}{\alpha}}\max_{1\leq i\leq M}t_{i}^{-\frac{1}{\alpha}}\left[\sup_{f\in\mathcal{C}_{z_{i}}}S_{t_{i}}(\pi;f)\right]^{\frac{1+\alpha}{\alpha}}\nonumber \\
 & \gtrsim\max\left\{ t_{1},\frac{t_{2}}{t_{1}^{\gamma}},...,\frac{T}{t_{M-1}^{\gamma}}\right\} \label{eq:lower-bound-final}\\
 & \geq\tilde{D}T^{\frac{1-\gamma}{1-\gamma^{M}}}.\nonumber 
\end{align}
This finishes the proof.

\subsection{Lower bound for general $M$-batch policy}
Now we are ready to state the minimax lower bound for any general $M$-batch
policy $(\Gamma,\pi)$, i.e., when the grid $\Gamma$ is allowed to be adaptively chosen.

\begin{theorem}\label{thm:lower-bound-adaptive}

Suppose that $\alpha\beta\le1$, and assume that $P_{X}$ is the uniform
distribution on $\mathcal{X}=[0,1]^{d}$. For any $M$-batch policy
$(\Gamma,\pi)$, there exists a nonparametric bandit
instance in $\mathcal{F}(\alpha,\beta)$ such that the regret of $\pi$
on this instance is lower bounded by 
\[
R_{T}(\pi)
\ge\tilde{D}_{1}(\frac{1}{M})^{\tilde{D}_2}\cdot T^{\frac{1-\gamma}{1-\gamma^{M}}},
\]
where $\tilde{D}_{1},\tilde{D}_{2}>0$ are constants independent of
$T$ and $M$.

\end{theorem}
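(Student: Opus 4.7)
The plan is to adapt the proof of Theorem~\ref{thm:lower-bound} by handling the adaptive grid via a Yao-style randomization over the scales used to build the hard instances. The key observation is that the chain of inequalities leading to~(\ref{eq:lower-bound-final}) is pointwise in the batch endpoints: for any sequence $0 = t_0 \le t_1 \le \cdots \le t_M = T$, even a random one, $\max\{t_1,\, t_2/t_1^\gamma,\, \ldots,\, T/t_{M-1}^\gamma\} \ge c\,T^{(1-\gamma)/(1-\gamma^M)}$ holds. Hence once we can target the realized ``bad'' batch $i^{\star}$ with a hard instance whose scale parameter $z_{i^{\star}}$ matches the realized $t_{i^{\star}-1}$, the fixed-grid argument essentially carries through. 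The obstruction is that the adversary must commit to a single instance before the game starts, whereas $z_{i^{\star}} = \lceil t_{i^{\star}-1}^{1/(2\beta+d)} \rceil$ is a random quantity under an adaptive policy.

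To get around this, I would introduce a pool of candidate hard instances whose size is polynomial in $M$. For each batch index $i \in [M]$ I place $K$ candidate scales $\sigma_{i,1} < \cdots < \sigma_{i,K}$ distributed geometrically with a bounded ratio on $[1, T]$, and for each $(i,k)$ build the associated hard family $\mathcal{C}_{z_{i,k}}$ from Step~2 of the proof of Theorem~\ref{thm:lower-bound} with $z_{i,k} = \lceil \sigma_{i,k}^{1/(2\beta+d)} \rceil$. I would then argue that it suffices to cover only the dynamic range of \emph{sensible} values of $t_{i-1}$: any policy whose grid falls outside the window $t_i \in [\,T^{(1-\gamma^i)/(1-\gamma^M)}/\mathrm{poly}(M),\; T^{(1-\gamma^i)/(1-\gamma^M)}\cdot \mathrm{poly}(M)\,]$ already violates the claimed regret bound via the pointwise max inequality, so $K = O(1)$ candidates per batch are enough, giving a pool of $O(M)$ families in total. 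Applying Steps~1, 3 and 4 of the proof of Theorem~\ref{thm:lower-bound} with the family $\mathcal{C}_{z_{i^{\star},k^{\star}}}$ matched to the realized scale then reproduces the fixed-grid bound up to absolute constants. Finally, Yao's minimax principle converts this ``exists $(i^{\star},k^{\star})$'' statement into a supremum over $\mathcal{F}(\alpha,\beta)$ at the cost of a $1/(MK) = \Theta(1/M)$ factor, producing the $(1/M)^{\tilde{D}_2}$ slack, with $\tilde{D}_2$ absorbing the polynomial in $M$ from the range-restriction step and the loss in the $z^{\alpha\beta}$ normalization.

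The step I expect to be most delicate is verifying that the KL bound~(\ref{eq:kl-bound}) still holds when $t_{\Gamma(t)-1}$ is an adaptive stopping time rather than a fixed number. The standard chain-rule decomposition should still apply: conditional on the shared filtration, the contexts $\{X_l\}$ contribute zero KL between the two reward hypotheses; each reward contributes $O(z^{-2\beta})$ KL only on the event $\{X_l \in C_j\}$, which has probability $z^{-d}$; and since the adaptive stopping rule is measurable with respect to the common filtration and is identical under the two hypotheses, the sum is truncated at $t_{\Gamma(t)-1}$ pathwise. Summing over $l \le t_{\Gamma(t)-1}$ then yields $\KL \le 2 z^{-(2\beta+d)} t_{\Gamma(t)-1}$ almost surely, matching~(\ref{eq:kl-bound}). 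Once this adaptive KL bound is in place, Le Cam's two-point inequality, the per-batch inferior-sampling lower bound, and the conversion to regret via Lemma~\ref{inferior-to-regret} all carry through verbatim, completing the proof.
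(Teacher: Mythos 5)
Your proposal correctly identifies the high-level obstruction (the adversary must commit to an instance before the adaptive grid is realized) and the right target (reduce to a ``bad'' batch where the realized grid straddles a reference grid), but the mechanism you propose to close the gap --- a pool of $O(M)$ per-batch instance families plus an appeal to Yao's principle --- does not actually work, and this is precisely where the paper's proof does something you are missing. The pigeonhole step needs all $M$ bad events to be evaluated under a \emph{single} probability measure: the paper defines $A_m=\{t_{m-1}<T_{m-1}<T_m\le t_m\}$ for deterministic reference times $T_m$, observes that $\sum_m \mathbf{1}(A_m)\ge 1$ pathwise, and hence extracts $m^\star$ with $\mathbb{E}_{\omega\sim\unif(\Omega)}\mathbb{P}_{\pi,\omega}(A_{m^\star})\ge 1/M$ under one fixed family. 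With your pool, the event $A_{i}$ that occurs with probability $\ge 1/(MK)$ under the mixture is only guaranteed to have that probability under \emph{some} component instance $\mu_{j,k}$, and nothing forces $j=i$; an instance built to be hard for batch $j$ gives you no regret in batch $i\ne j$. The instances at different scales are far apart in total variation over $T_{i-1}$ samples, so you also cannot transfer $\mathbb{P}_{\mu_{j,k}}(A_i)$ to $\mathbb{P}_{\mu_{i,k'}}(A_i)$ by a change of measure. The paper's resolution is the construction you would need to supply: a \emph{single} family $\mathcal{C}$ whose members carry bumps at all $M$ scales simultaneously, placed in $M$ disjoint regions $\{C_i\}_{i\in I_m}$ of the covariate space (with the amplitude and count of bumps rescaled by factors of $M$ so the margin condition still holds, verified via Lemma~\ref{lemma:margin-check}). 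That is what lets every $A_m$ be priced against the same prior.

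Two further points. First, your range-restriction step (``only $K=O(1)$ scales per batch matter because grids outside a $\mathrm{poly}(M)$ window already violate the bound'') is circular: showing that a grid with $t_{i-1}$ too small forces large regret in batch $i$ requires an instance already planted at scale $t_{i-1}^{1/(2\beta+d)}$, which is the commitment problem you are trying to avoid. Second, your claim that $\KL\le 2z^{-(2\beta+d)}t_{\Gamma(t)-1}$ holds ``almost surely'' is not meaningful --- KL divergence is a functional of the two laws, not a pathwise quantity, and $t_{\Gamma(t)-1}$ is random under an adaptive grid. The paper sidesteps this by working with $\int_{A_m}\min\{\mathrm{d}\pminus^{T_{m-1}},\mathrm{d}\pplus^{T_{m-1}}\}$, i.e., restricting to the event $A_m$ (on which the policy's information at $T_m$ is frozen at $t_{m-1}<T_{m-1}$) and then bounding the total variation of the laws up to the \emph{deterministic} time $T_{m-1}$ via Lemma~\ref{lemma:single-bin-tv}, which with $z_m\asymp(36T_{m-1}M^2)^{1/(2\beta+d)}$ yields the $1/(2M)$ slack. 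You should restructure your argument around these two devices rather than a per-batch instance pool.
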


\noindent See Appendix~\ref{sec:adaptive-lower-proof} for the proof. 

Since our focus is on $M\apprle\log\log T$ (when $M\apprge\log\log T$,
by Corollary~\ref{coro:loglogt} there exists an algorithm whose regret attains the optimal
fully online regret), we can see Theorem~\ref{thm:lower-bound} and Theorem~\ref{thm:lower-bound-adaptive} differ at most
by poly-log factors in $T$.

Unlike the fixed grid case where we choose a specific family of hard
instances to target the regret in a certain batch, we cannot directly do
so when the grid is adaptively selected because the adversary does
not know $\{t_{i}\}_{i=1}^{M}$ in advance. Inspired by \cite{zijun2020batch}, we overcome this difficulty by using an appropriately defined
bad event that happens with sufficient probability to reduce the adaptive
case to the fixed grid case. A significant technical difference to MAB lies in the construction of a family of nonparametric bandit instances that are simultaneously hard for all batches. The full proof can be found in Appendix~\ref{sec:adaptive-lower-proof}.

\subsection{Implications on design of optimal $M$-batch policy \label{subsec:implication-on-upper-bound}}

As we have mentioned, the proof of the lower bound with fixed grid, i.e., Theorem~\ref{thm:lower-bound}
facilitates the design of optimal $M$-batch policy. 

\paragraph{Grid selection.}First, the lower bound of the whole horizon
is reduced to the worst-case regret over a specific batch; see~(\ref{eq:key-ineq}).
Consequently, we need to design the grid $\Gamma=(t_{0},t_{1},t_{2},\ldots,t_{M-1},t_{M})$
such that the total regret is evenly distributed across batches. 
More concretely, in view of the lower bound~\eqref{eq:lower-bound-final}, one needs to set $t_1 \asymp \frac{t_i}{t_{i-1}^\gamma} \asymp T^{\frac{1-\gamma}{1-\gamma^M}}$ for $2\leq i \leq M$.

\paragraph{Dynamic binning.} In addition, in the proof of the lower
bound, for each different batch $i$, we use different families of
hard reward instances, parameterized by the number of bins $z_{i}=\lceil t_{i-1}^{1/(2\beta+d)}\rceil$.
In other words, from the lower bound perspective, the granularity
(i.e., the bin width $1/z_{i}$) at which we investigate the mean
reward function depends crucially on the grid points $\{t_{i}\}$:
the larger the grid point $t_{i}$, the finer the granularity. This
key observation motivates us to consider the batched successive elimination
with dynamic binning algorithm to be introduced below. 

\section{Batched successive elimination with dynamic binning \label{sec:algo}}

\begin{algorithm}[t]
\caption{Batched successive elimination with dynamic binning (\myalg)}
\label{alg:adaptive-bin}
\begin{lyxcode}
\textbf{$\mathbf{Input}$}:~Batch~size~$M$,~grid~$\Gamma=\{t_{i}\}_{i=0}^{M}$,~split~factors~$\{g_{i}\}_{i=0}^{M-1}$.

$\mathcal{L}\leftarrow\mathcal{B}_{1}$

\textbf{$\mathbf{for}$~$C\in\mathcal{L}$~$\mathbf{do}$}
\begin{lyxcode}
$\mathcal{I}_{C}=\mathcal{I}$
\end{lyxcode}
\textbf{$\mathbf{for}$}~$i=1,...,M-1$~\textbf{$\mathbf{do}$}
\begin{lyxcode}
\textbf{$\mathbf{for}$}~$t=t_{i-1}+1,...,t_{i}$~\textbf{$\mathbf{do}$}
\begin{lyxcode}
$C\leftarrow\mathcal{L}(X_{t})$

Pull~an~arm~from~$\mathcal{I}_{C}$~in~a~round-robin~way.

\textbf{$\mathbf{if}$}~$t=t_{i}$~\textbf{$\mathbf{then}$}
\begin{lyxcode}
Update~$\mathcal{L}$~and~$\{\mathcal{I}_{C}\}_{C\in\mathcal{L}}$~by~Algorithm~\textrm{\ref{algo-subroutine}}~$(\mathcal{L},\{\mathcal{I}_{C}\}_{C\in\mathcal{L}},i,g_{i})$.
\end{lyxcode}
\end{lyxcode}
\end{lyxcode}
$\mathbf{for}$~$t=t_{M-1}+1,...,T$~$\mathbf{do}$
\begin{lyxcode}
$C\leftarrow\mathcal{L}(X_{t})$

Pull~any~arm~from~$\mathcal{I}_{C}$.
\end{lyxcode}
\end{lyxcode}
\end{algorithm}

\begin{algorithm}
\caption{Tree growing subroutine}
\label{algo-subroutine}
\begin{lyxcode}
$\mathbf{Input}$:~Active~nodes~$\mathcal{L}$,~active~arm~sets~$\{\mathcal{I}_{C}\}_{C\in\mathcal{L}}$,~batch~number~$i$,~split~factor~$g_{i}$.

$\mathcal{L}^{\prime}\leftarrow\{\}$

$\mathbf{for}$~\textbf{$C\in\mathcal{L}$}~$\mathbf{do}$
\begin{lyxcode}
$\mathbf{if}$~$|\mathcal{I}_{C}|=1$~$\mathbf{then}$
\begin{lyxcode}
$\mathcal{L^{\prime}}\leftarrow\mathcal{L^{\prime}}\cup\{C\}$

Proceed~to~next~$C$~in~the~iteration.
\end{lyxcode}
$\bar{Y}_{C,i}^{\max}\leftarrow\max_{k\in\mathcal{I}_{C}}\bar{Y}_{C,i}^{(k)}$

$\mathbf{for}$~$k\in\mathcal{I}_{C}$~$\mathbf{do}$
\begin{lyxcode}
$\mathbf{if}$~$\bar{Y}_{C,i}^{\max}-\bar{Y}_{C,i}^{(k)}>U(m_{C,i},T,C)$~$\mathbf{then}$~$\mathcal{I}_{C}\leftarrow\mathcal{I}_{C}-\{k\}$
\end{lyxcode}
$\mathbf{if}$~$|\mathcal{I}_{C}|>1$~$\mathbf{then}$
\begin{lyxcode}
$\mathcal{I}_{C^{\prime}}\leftarrow\mathcal{I}_{C}$~$\mathbf{for}$~$C^{\prime}\in\textrm{child}(C,g_{i})$

$\mathcal{L^{\prime}}\leftarrow\mathcal{L^{\prime}}\cup\textrm{child}(C,g_{i})$
\end{lyxcode}
$\mathbf{else}$
\begin{lyxcode}
$\mathcal{L^{\prime}}\leftarrow\mathcal{L^{\prime}}\cup\{C\}$
\end{lyxcode}
\end{lyxcode}
Return~$\mathcal{L^{\prime}}$
\end{lyxcode}
\end{algorithm}
In this section, we present the batched successive elimination with
dynamic binning policy (\myalg) that nearly
attains the minimax lower bound, up to log factors; see Algorithm~\ref{alg:adaptive-bin}. On a high level,
Algorithm \ref{alg:adaptive-bin} gradually partitions the covariate
space $\mathcal{X}$ into smaller hypercubes (i.e., bins) throughout
the batches based on a list of carefully chosen cube widths, and reduces
the nonparametric bandit in each cube to a bandit problem without
covariates. 

\begin{figure}[t]
\begin{center}
\begin{tikzpicture}[level distance=3cm, sibling distance=2cm, every node/.style={circle, draw, align=center, minimum size=1.4cm}]   
\node [fill=red!30, text=black, circle, draw] {$[0,1]$}     
	child {node [fill=red!30, text=black, circle, draw] {$[0,\frac{1}{4})$}       
		child {node [fill=green!30, text=black, circle, draw] {$[0,\frac{1}{12})$}}       
		child {node [fill=green!30, text=black, circle, draw] {$[\frac{1}{12},\frac{1}{6})$}} 
		child {node [fill=green!30, text=black, circle, draw] {$[\frac{1}{6},\frac{1}{4})$}}   
		}    
	child {node [fill=green!30, text=black, circle, draw] {$[\frac{1}{4},\frac{1}{2})$}}
	child {node [fill=green!30, text=black, circle, draw] {$[\frac{1}{2},\frac{3}{4})$}}
	child {node [fill=red!30, text=black, circle, draw] {$[\frac{3}{4},1]$}       
		child {node [fill=green!30, text=black, circle, draw] {$[\frac{3}{4},\frac{5}{6})$}}       
		child {node [fill=green!30, text=black, circle, draw] {$[\frac{5}{6},\frac{11}{12})$}} 
		child {node [fill=green!30, text=black, circle, draw] {$[\frac{11}{12},1]$}}   
		}; 
\end{tikzpicture}
\end{center}

\caption{An example of the tree growing process for $d=1, M=3, G=\{4,3,1\}$. The root node is at depth 0. For the first batch, the 4 nodes located at depth 1 of the tree were used. Both $[\frac{1}{4},\frac{1}{2})$ and $[\frac{1}{2},\frac{3}{4})$ only had one active arm remaining so they were not further split and remained in the set of active nodes (green). Meanwhile, $|\mathcal{I}_{[0,\frac{1}{4})}|=|\mathcal{I}_{[\frac{3}{4},1]}|=2$ so each of them was split into 3 smaller nodes, and both nodes were marked as inactive (red). For the second batch, all the green nodes were actively used but arm elimination was performed at the end of batch 2 only for nodes located at depth 2 (the green nodes at depth 1 already have 1 active arm remaining so there is no need to eliminate again).}
\label{tree-example}
\end{figure}
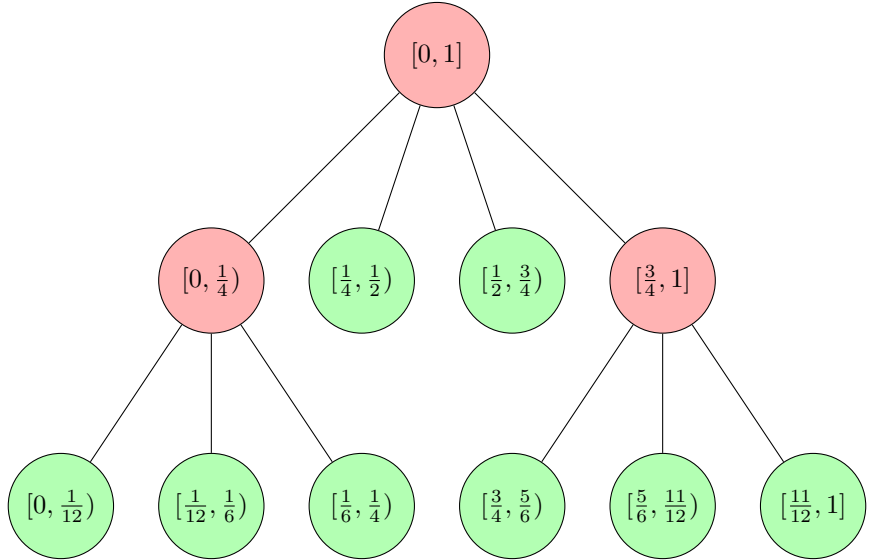

\paragraph{A tree-based interpretation. }The process is best illustrated
with the notion of a tree $\mathcal{T}$ of depth $M$; see Figure~\ref{tree-example}.
Each layer of of the tree $\mathcal{T}$ is a set of bins that form
a regular partition of $\mathcal{X}$ using hypercubes with equal
widths. And the common width of the bins $\mathcal{B}_{i}$ in layer
$i$ is dictated by a list $\{g_{i}\}_{i=0}^{M-1}$ of split factors.
More precisely, we let 
\begin{equation}
w_{i}\coloneqq(\prod_{l=0}^{i-1}g_{l})^{-1}\label{eq:width}
\end{equation}
be the width of the cubes in the $i$-th layer $\mathcal{B}_{i}$ for $i\ge1$, and $w_0=1$.
In other words, $\mathcal{B}_{i}$ contains all the cubes 

\[
C_{i,\bm{v}}=\{x\in\mathcal{X}:(v_{j}-1)w_{i}\le x_{j}<v_{j}w_{i},1\le j\le d\},
\]
where $\bm{v}=(v_{1},v_{2},\ldots,v_{d})\in[\frac{1}{w_{i}}]^{d}$.
As a result, there are in total $(\frac{1}{w_{i}})^{d}$ bins in $\mathcal{B}_{i}$. 

Algorithm~\ref{alg:adaptive-bin} proceeds in batches and maintains
two key objects: (1) a list $\mathcal{L}$ of active bins, and (2)
the corresponding active arms $\mathcal{I}_{C}$ for each $C\in\mathcal{L}$;
see Figure~\ref{tree-example} for an example. Specifically, prior
to the game (i.e., prior to the first batch), $\mathcal{L}$ is set
to be $\mathcal{B}_{1}$, all bins in layer 1, and $\mathcal{I}_{C}=\{1,-1\}$
for all $C\in\mathcal{L}$. Within this batch, the statistician tries
the arms in $\mathcal{I}_{C}$ equally likely for all bins in $\mathcal{L}$.
Then at the end of the batch, given the revealed rewards in this batch,
we update the active arms $\mathcal{I}_{C}$ for each $C\in\mathcal{L}$
via successive elimination. If no arm were eliminated from $\mathcal{I}_{C}$,
this suggests that the current bin is not fine enough for the statistician
to tell the difference between the two arms. As a result, she splits
the bin $C\in\mathcal{L}$ into its children $\textrm{child}(C)$
in $\mathcal{T}$. All the child nodes will be included in $\mathcal{L}$,
while the parent $C$ stops being active (i.e., $C$ is removed from
$\mathcal{L}$). The whole process repeats in a batch fashion. \footnote{For the final batch $M$, the split factor $g_{M-1}=1$ by default because there is no need to further partition the nodes for estimation.}

\paragraph{Grid $\Gamma$ and split factors $\{g_{i}\}_{i=0}^{M-1}$.}
As one can see, the split factor $g_{i}$ controls how many children
a node at layer $i$ can have and its appropriate choice is crucial
for obtaining small regret. Intuitively, $g_{i}$ should be selected
in a way such that a node $C_{i+1}$ with width $w_{i}$ can fully
leverage the number of samples allocated to it during the $(i+1)$-th
batch. With these goals in mind, we design the grid $\Gamma=\{t_{i}\}$
and split factors $\{g_{i}\}$ as follows. Recall that $\gamma=\frac{\beta(1+\alpha)}{2\beta+d}$.
We set 
\[
b=\Theta\left(T^{\frac{1-\gamma}{1-\gamma^{M}}}\right).
\]
The split factors are chosen according to 
\begin{align}
g_{0}=\lfloor b^{\frac{1}{2\beta+d}}\rfloor,\qquad\text{and}\qquad g_{i}=\lfloor g_{i-1}^{\gamma}\rfloor,i=1,...,M-2. \label{eq:split-factors}
\end{align}
In addition, the grid is chosen such that
\begin{align}
t_{i}-t_{i-1} & =\lfloor l_{i}w_{i}^{-(2\beta+d)}\log(Tw_{i}^{d})\rfloor,1\le i\le M-1,\label{eq:batch-bin-size}
\end{align}
where $l_{i}>0$ is a constant to be specified later. It is easy to
check that with these choices, we have 
\begin{equation*}
t_{1}\asymp T^{\frac{1-\gamma}{1-\gamma^{M}}},\qquad\text{and}\qquad t_{i}=\lfloor b(t_{i-1})^{\gamma}\rfloor,\quad\text{for }i=2,...,M.
\end{equation*}
In particular, we set $b$ properly to make $t_{M}=T$. Indeed, these
choices taken together meet the expectation laid out in Section~\ref{subsec:implication-on-upper-bound}:
we need to choose the grid and the split factors appropriately so
that (1) the total regret spreads out across different batches, and
(2) the granularity becomes finer as we move further to later batches. 

\paragraph{When to eliminate arms?} Now we zoom in on the elimination
process described in Algorithm~\ref{algo-subroutine}. The basic
idea follows from successive elimination in the bandit literature
\cite{even2006action,perchet2013multi,zijun2020batch}: the statistician
eliminates an arm from $\mathcal{I}_{C}$ if she expects the arm to
be suboptimal in the bin $C$ given the rewards collected in $C$.
Specifically, for any node $C\in\mathcal{T}$, define 
\[
U(\tau,T,C)\coloneqq4\sqrt{\frac{\log(2T|C|^{d})}{\tau}},
\]
where $|C|$ denotes the width of the bin. Let $m_{C,i}\coloneqq\sum_{t=t_{i-1}+1}^{t_{i}}\mathbf{1}\{X_{t}\in C\}$
be the number of times we observe contexts from $C$ in batch $i$.
We then define for $k\in\{1,-1\}$ that 
\[
\bar{Y}_{C,i}^{(k)}\coloneqq\frac{\sum_{t=t_{i-1}+1}^{t_{i}}Y_{t}\cdot\mathbf{1}\{X_{t}\in C,A_{t}=k\}}{\sum_{t=t_{i-1}+1}^{t_{i}}\mathbf{1}\{X_{t}\in C,A_{t}=k\}},
\]
which is the empirical mean reward of arm $k$ in node $C$ during
the $i$-th batch. It is easy to check that $\bar{Y}_{C,i}^{(k)}$
has expectation $\bar{f}_{C}^{(k)}$ given by 
\[
\bar{f}_{C}^{(k)}\coloneqq\mathbb{E}[f^{(k)}(X)\mid X\in C]=\frac{1}{\mathbb{P}_{X}(C)}\int_{C}f^{(k)}(x)\mathrm{d}\mathbb{P}_{X}(x).
\]
Similarly, we define the average optimal reward in bin $C$ to be
\[
\bar{f}_{C}^{\star}\coloneqq\frac{1}{\mathbb{P}_{X}(C)}\int_{C}f^{\star}(x)\mathrm{d}\mathbb{P}_{X}(x).
\]

The elimination threshold $U(m_{C,i},T,C)$ is chosen such that an
arm $k$ with $\bar{f}_{C}^{\star}-\bar{f}_{C}^{(k)}\gg|C|^{\beta}$
is eliminated with high probability at the end of batch $i$. Therefore,
when $|\mathcal{I}_{C}|>1$, the remaining arms are statistically
indistinguishable from each other, so $C$ is split into smaller
nodes to estimate those arms more accurately using samples from future
batches. On the other hand, when $|\mathcal{I}_{C}|=1$, the remaining
arm is optimal in $C$ with high probability---a consequence of the
smoothness condition, and it will be exploited in the later batches.

\paragraph{Connections and differences with ABSE in~\cite{perchet2013multi}.}
In appearance, \myalg (Algorithm~\ref{alg:adaptive-bin}) looks quite
similar to the Adaptively Binned Successive Elimination (ABSE) proposed
in~\cite{perchet2013multi}. However, we would like to emphasize
several fundamental differences. First, the motivations for the algorithms
are completely different. \cite{perchet2013multi} designs ABSE to
adapt to the unknown margin condition $\alpha$, while our focus is
to tackle the batch constraint. In fact, without the batch
constraints, if $\alpha$ is known, adaptive binning is not needed
to achieve the optimal regret \cite{perchet2013multi}. This is certainly
not the case in the batched setting. Fixing the number of bins used
across different batches is suboptimal because one can construct instances
that cause the regret incurred during a certain batch to explode.
We will expand on this phenomenon in Section~\ref{subsec:static-failure}.
Secondly, the algorithm in \cite{perchet2013multi} partitions a bin
into a \emph{fixed} number $2^{d}$ of smaller ones once the original
bin is unable to distinguish the remaining arms. In this way, the
algorithm can adapt to the difference in the local difficulty of the
problem. In comparison, one of our main contributions is to carefully
design the list of \emph{varying} split factors that allows the new
cubes to maximally utilize the number of samples allocated to it during
the next batch. 

\subsection{Regret guarantees}

Now we are ready to present the regret performance of \myalg
(Algorithm~\ref{alg:adaptive-bin}).

\begin{theorem}\label{thm:upper-bound}

Suppose that $\alpha\beta\le1$. Fix any constant $D_{1}>0$ and suppose
that $M\le D_{1}\log T$. Equipped with the grid and split factors
list that satisfy (\ref{eq:batch-bin-size}) and (\ref{eq:split-factors}),
the policy $\hat{\pi}$ given by Algorithm~\ref{alg:adaptive-bin}
obeys
\[
R_{T}(\hat{\pi})
\le\tilde{C}(\log T)^{2}\cdot T^{\frac{1-\gamma}{1-\gamma^{M}}},
\]
where $\tilde{C}>0$ is a constant independent of $T$ and $M$.
\end{theorem}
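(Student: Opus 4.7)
The plan is to decompose the regret batch by batch on a high-probability ``good event,'' bound each batch's contribution by $O(b\log T)$ where $b\asymp T^{(1-\gamma)/(1-\gamma^M)}$, and then sum. A Hoeffding bound together with a union bound over the (polynomially many) active (bin, batch, arm) triples will show that the good event $\mathcal{E}$ on which every empirical bin mean $\bar Y_{C,i}^{(k)}$ is within $U(m_{C,i},T,C)/4$ of its population counterpart $\bar f_C^{(k)}$, and every sample count $m_{C,i}$ concentrates around its expectation $(t_i-t_{i-1})\mathbb{P}_X(C)$, has $\mathbb{P}(\mathcal{E}^c) \le 1/T$, contributing only $O(1)$ to $R_T(\hat\pi)$.

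On $\mathcal{E}$, the successive elimination rule has two properties. The standard one is that the arm with the larger bin-average $\bar f_C^{(k)}$ is never eliminated, so when $|\mathcal{I}_C|$ drops to one the surviving arm is the bin-average-optimal arm. The more delicate property is that by choosing the constants $l_i$ in (\ref{eq:batch-bin-size}) small enough, one can guarantee on $\mathcal{E}$ that the elimination threshold $U(m_{C,i},T,C)/2$ exceeds the smoothness variation $2L(w_i\sqrt d)^\beta$ of $f^{(1)}-f^{(-1)}$ across any layer-$i$ bin of width $w_i$. Combined with the singleton criterion $|\bar Y_{C,i}^{(1)}-\bar Y_{C,i}^{(-1)}|>U_i$, which on $\mathcal{E}$ forces $|\bar f_C^{(1)}-\bar f_C^{(-1)}|>U_i/2$, this implies $f^{(1)}(x)-f^{(-1)}(x)$ does not change sign on $C$, so the surviving arm is \emph{pointwise} optimal throughout $C$ and the singleton contributes zero regret in every subsequent batch.

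With singletons cost-free, I only need to bound the regret from (a) all of batch~$1$ (where no elimination has yet occurred), and (b) for $i\ge 2$, the bins that still have two active arms at the start of batch $i$. For (a), the trivial per-round bound yields $R_{(1)} \le t_1 \lesssim b\log T$. For (b), such a bin is a newly-created layer-$i$ child of a layer-$(i-1)$ parent that could not distinguish its arms; by the elimination threshold the parent has bin-average gap $\lesssim w_{i-1}^\beta$, smoothness extends this to a pointwise gap of $O(w_{i-1}^\beta)$ throughout the parent and child, and Assumption~\ref{assumption:margin} further bounds the total $P_X$-mass of these bins by $O(w_{i-1}^{\alpha\beta})$. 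Combining with (\ref{eq:batch-bin-size}) and the identity $\beta(1+\alpha)/(2\beta+d)=\gamma$, a direct computation with the explicit $w_i = b^{-(1-\gamma^i)/((1-\gamma)(2\beta+d))}$ gives
\[
R_{(i)} \;\lesssim\; (t_i-t_{i-1})\,w_{i-1}^{\alpha\beta}\cdot w_{i-1}^\beta \;\asymp\; w_i^{-(2\beta+d)}\,w_{i-1}^{\beta(1+\alpha)}\,\log T \;=\; b\log T,
\]
in which the telescoping of the exponents $1-\gamma^i$ and $\gamma(1-\gamma^{i-1})$ is the key cancellation.

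Summing $R_{(1)} + \sum_{i=2}^M R_{(i)} \lesssim M b\log T$ and using $M\le D_1\log T$ gives the claimed $(\log T)^2\cdot T^{(1-\gamma)/(1-\gamma^M)}$ bound. The main obstacle is the singleton-is-non-crossing claim: without it, layer-$1$ singletons alone would incur $\Omega(T\,w_1^{\beta(1+\alpha)}) = \Omega(T^{1-\gamma(1-\gamma)/(1-\gamma^M)})$ regret, which is strictly above $T^{(1-\gamma)/(1-\gamma^M)}$ as soon as $M\ge 3$. This forces the constants hidden in $U(m,T,C)$ and in the choice of $l_i$ to be calibrated not merely for concentration but to beat the problem's smoothness constant $L$ as well, which is the one genuinely non-mechanical step in the argument; everything else reduces to careful arithmetic on the geometric sequences $\{w_i\}$, $\{g_i\}$, and $\{t_i\}$ pinned down by (\ref{eq:split-factors})--(\ref{eq:batch-bin-size}).
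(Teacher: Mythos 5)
Your overall architecture matches the paper's: a per-batch decomposition, the calibration of the elimination threshold against the smoothness constant so that a singleton bin is \emph{pointwise} optimal and contributes no further regret, the margin condition applied at scale $w_{i-1}^{\beta}$ to the still-ambiguous bins, and the telescoping $w_i^{-(2\beta+d)}w_{i-1}^{\beta(1+\alpha)}\asymp b$ that makes each batch cost $O(b\log T)$. All of that is essentially the paper's proof.

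The genuine gap is in your very first step: the claim that a Hoeffding bound plus a union bound over all (bin, batch, arm) triples yields $\mathbb{P}(\mathcal{E}^c)\le 1/T$ for the event that every $\bar Y_{C,i}^{(k)}$ is within $U(m_{C,i},T,C)/4$ of $\bar f_C^{(k)}$. With the algorithm's threshold $U(\tau,T,C)=4\sqrt{\log(2T|C|^d)/\tau}$, Hoeffding (after the further union over the random pull count $\tau\le m_{C,i}^{\star}$) gives a per-bin failure probability of order $m_{C,i}^{\star}/(T|C|^{d})$, and layer $i$ contains $|C|^{-d}=w_i^{-d}$ bins. Since $m_{C,i}^{\star}\asymp w_i^{-2\beta}\log(Tw_i^d)$ and $w_i^{-1}$ can be as large as $T^{1/(2\beta+d)}$, the union bound over a layer is of order $w_i^{-(2\beta+2d)}\log T/T$, which is polynomially \emph{large} in $T$ whenever $d$ is not small compared with $\beta$; the log factor inside $U$ is $\log(2T|C|^d)$, not $\log$ of ($T$ times the number of bins), so it cannot absorb the union over bins. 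Consequently you cannot pay for the bad event with the trivial bound $T\cdot\mathbb{P}(\mathcal{E}^c)=O(1)$. The paper circumvents this by never forming a global concentration event for the reward means: it keeps the failure event local to each bin ($\mathcal{A}_C^c$, conditioned on the ancestors' good events $\mathcal{G}_C$), bounds its probability by $4m_{C,i}^{\star}/(T|C|^{d})$, and multiplies it not by $T$ but by the margin-restricted regret that $C$ and its descendants can generate over the whole horizon, namely $c_1 w_{i-1}^{\beta}\,T\,\mathbb{P}\bigl(0<|f^{(1)}-f^{(-1)}|\le c_1 w_{i-1}^{\beta},\,X\in C\bigr)$; the factor $\mathbb{P}(X\in C)\le\bar c|C|^{d}$ hidden in this regret bound cancels the $|C|^{-d}$ in the probability, and summing over bins recovers the same $(t_i-t_{i-1})w_{i-1}^{\beta(1+\alpha)}$ order as the good-event term. (Only the sample-count concentration admits a global union bound at level $1/T$, which is the paper's Lemma~\ref{lemma:clean-event-1}.) To repair your argument you would either have to replicate this local probability-times-restricted-regret accounting, or change the algorithm's threshold to carry a larger logarithm--—but the theorem concerns the algorithm with $U$ as defined, so as written the step fails.
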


\noindent See Appendix~\ref{sec:upper-proof} for the proof. 

\medskip

While Theorem~\ref{thm:upper-bound} requires $M\lesssim\log T$,
we see from the corollary below that it is in fact sufficient to show
the optimality of Algorithm~\ref{alg:adaptive-bin}. 

\begin{corollary}\label{coro:loglogt}

As long as $M\geq D_{2}\log\log(T)$, where $D_{2}$ depends on $\gamma=\frac{\beta(1+\alpha)}{2\beta+d}$,
Algorithm~\ref{alg:adaptive-bin} achieves
\[
R_{T}(\hat{\pi})
\le\tilde{C}(\log T)^{2}\cdot T^{1-\gamma},
\]
where $\tilde{C}>0$ is a constant independent of $T$ and $M$.

\end{corollary}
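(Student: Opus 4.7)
The plan is to derive the corollary as a direct algebraic consequence of Theorem~\ref{thm:upper-bound}, by showing that once $M$ grows logarithmically in $\log T$, the exponent $\frac{1-\gamma}{1-\gamma^M}$ collapses to essentially $1-\gamma$. The starting point is the decomposition
\[
\frac{1-\gamma}{1-\gamma^M} \;=\; (1-\gamma) \;+\; (1-\gamma)\frac{\gamma^M}{1-\gamma^M},
\]
which yields
\[
T^{\frac{1-\gamma}{1-\gamma^M}} \;=\; T^{1-\gamma}\cdot\exp\!\left((1-\gamma)\frac{\gamma^M}{1-\gamma^M}\log T\right).
\]
It therefore suffices to bound the exponential factor by a constant depending only on $\gamma$.

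Next, I would choose $D_2 = 1/\log(1/\gamma)$ (or any slightly larger value). Since $\gamma\in(0,1)$, setting $M \ge D_2\log\log T$ gives $\gamma^M \le 1/\log T$, which in turn ensures $1-\gamma^M \ge 1/2$ for all sufficiently large $T$. Consequently,
\[
(1-\gamma)\frac{\gamma^M}{1-\gamma^M}\log T \;\le\; 2(1-\gamma)\gamma^M\log T \;\le\; 2(1-\gamma),
\]
so the exponential factor is at most $\exp(2(1-\gamma))$, which depends only on $\gamma$ (equivalently on $\alpha,\beta,d$). Substituting into the bound of Theorem~\ref{thm:upper-bound} and folding this constant into $\tilde{C}$ delivers the claimed rate $\tilde{C}(\log T)^2 T^{1-\gamma}$.

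The one bookkeeping caveat is that Theorem~\ref{thm:upper-bound} requires $M \le D_1\log T$. This is easily handled: if the supplied batch budget exceeds $D_1\log T$, one may simply run \myalg with only $\lfloor D_1\log T\rfloor$ batches, which is still $\ge D_2\log\log T$ for large $T$ and only throws away unused budget without increasing regret. I do not anticipate any substantive obstacle, as the corollary is essentially the observation that the geometric decay of $\gamma^M$ outpaces the logarithmic growth of $\log\log T$, so the batch-induced inflation in the exponent becomes negligible in the prescribed regime.
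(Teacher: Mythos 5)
Your proof is correct and is exactly the intended argument: the identity $\frac{1-\gamma}{1-\gamma^M}=(1-\gamma)+(1-\gamma)\frac{\gamma^M}{1-\gamma^M}$ together with the choice $D_2=1/\log(1/\gamma)$ forces $\gamma^M\log T\le 1$, so the inflation factor $T^{(1-\gamma)\gamma^M/(1-\gamma^M)}$ is bounded by a constant depending only on $\gamma$. The paper leaves this computation implicit, and your handling of the $M\le D_1\log T$ requirement (capping the number of batches actually used) is the right bookkeeping step.
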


Theorem~\ref{thm:upper-bound}, together with Corollary~\ref{coro:loglogt}
and Theorem~\ref{thm:lower-bound-adaptive} establish the fundamental limits
of batch learning for the nonparametric bandits with covariates, as
well as the optimality of \myalg, up to logarithmic factors. To see
this, when $M\lesssim\log\log(T)$, the upper bound in Theorem \ref{thm:upper-bound}
matches the lower bounds in Theorem~\ref{thm:lower-bound} and Theorem~\ref{thm:lower-bound-adaptive}, apart
from log factors. On the other end, when $M\gtrsim\log\log(T)$, Algorithm~\ref{alg:adaptive-bin},
while splitting the horizon into $M$ batches, achieves the optimal
regret (up to log factors) for the setting without the batch constraint \cite{perchet2013multi}.
It is evident that Algorithm~\ref{alg:adaptive-bin} is optimal in
this case. 

\subsection{Numerical experiments}

In this section, we provide some experiments on the empirical performance
of Algorithm~\ref{alg:adaptive-bin}. We set $T=50000,d=\beta=1,\alpha=0.2$.
We let $P_{X}$ be the uniform distribution on $[0,1]$. Denote $q_{j}=(j-1/2)/4$
and $C_{j}=[q_{j}-1/8,q_{j}+1/8]$ for $1\le j\le4$. For the mean
reward functions, we choose $f^{(1)},f^{(-1)}:[0,1]\rightarrow\mathbb{R}$
such that
\[
f^{(1)}(x)=\frac{1}{2}+\sum_{j=1}^{4}\omega_{j}\varphi_{j}(x),\qquad f^{(-1)}(x)=\frac{1}{2},
\]
where $\omega_{j}'s$ are sampled i.i.d.~from $\mathrm{Rad}(\frac{1}{2})$,
$\varphi_{j}(x)=\frac{1}{4}\phi(8(x-q_{j}))\mathbf{1}\{x\in C_{j}\}$
and $\phi(x)=(1-|x|)\mathbf{1}\{|x|\le1\}$. We let $Y^{(k)}\sim\mathrm{Bernoulli}(f^{(k)}(x))$.
To illustrate the performance of Algorithm~\ref{alg:adaptive-bin},
we compare it with the Binned Successive Elimination (BSE) policy
from \cite{perchet2013multi}, which is shown to be minimax optimal
in the fully online case. Figure~\ref{fig:exp} shows the regret of
Algorithm~\ref{alg:adaptive-bin} under different batch budegts.
One can see that it is sufficient to have $M=5$ batches to achieve
the fully online efficiency.

\begin{figure}         
\centering         
\includegraphics[scale=0.4]{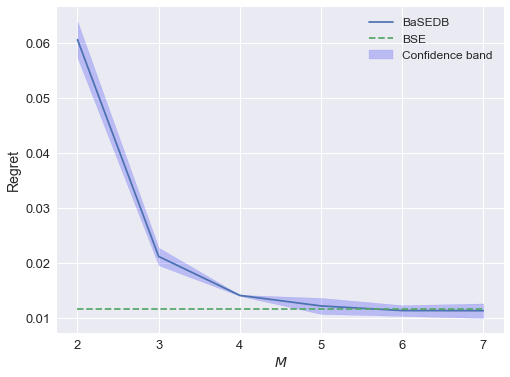}         
\caption{Regret vs. batch budget $M$.}         
\label{fig:exp}     
\end{figure}

\subsection{Failure of static binning\label{subsec:static-failure}}

We have seen the power of dynamic binning in solving batched nonparametric
bandits by establishing its rate-optimality in minimizing regret.
Now we turn to a complimentary but intriguing question: is it necessary
to use dynamic binning to achieve optimal regret under the batch constraint?
To formally address this question, we investigate the performance
of successive elimination with \emph{static} binning, i.e., Algorithm~\ref{alg:adaptive-bin}
with $g_{0}=g$, and $g_{1}=g_{2}=\cdots g_{M-2}=1$. Although static
binning works when $M$ is large (e.g., a single choice of $g$ attains
the optimal regret~\cite{rigollet2010nonparametric,perchet2013multi}
in the fully online setting), we show that it must fail when $M$
is small. 

To bring the failure mode of static binning into focus, we consider
the simplest scenario when $M=3$, and $\alpha=\beta=d=1$. Note
that the successive elimination with \emph{static} binning algorithm
is parameterized by the grid choice $\Gamma=\{t_{0}=0,t_{1},t_{2},t_{3}=T\}$
and the fixed number $g$ of bins. The following theorem formalizes
the failure of static binning in achieving optimal regret when $M=3$.

\begin{theorem}\label{thm:static-lower}

Consider $M=3$, and $\alpha=\beta=d=1$. For any choice of $1\leq t_{1}<t_{2}\leq T-1$,
and any choice of $g$, there exists a nonparametric bandit instance
in $\mathcal{F}(1,1)$ such that the resulting successive elimination
with \emph{static} binning algorithm $\hat{\pi}_{\mathrm{static}}$
satisfies 

\[
R_{T}(\hat{\pi}_{\mathrm{static}})
\geq\tilde{C}_{1}T^{\frac{9}{19}+\kappa},
\]
for some $\kappa, \tilde{C}_{1}>0$ that are independent
of $T$. Here $T^{\frac{9}{19}}$ is the optimal regret achieved by
\myalg---an successive elimination algorithm with dynamic binning.
\end{theorem}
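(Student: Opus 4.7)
The plan is to construct, for every choice of the static-binning parameters $(g,t_1,t_2)$, an adversarial instance drawn from the bump family $\mathcal{C}_z$ in~\eqref{eq:instance_family} (specialized to $\alpha=\beta=d=1$, for which $s=1$, leaving a single sign $\omega$ and the scale $z$ as free parameters). The scale $z$ will be tuned to the algorithm's parameters, exploiting the fact that static binning must commit to a single granularity $1/g$ across all three batches. Two regimes drive the analysis: (i) $z>g$, where the bump of width $1/z$ sits inside a single algorithm bin, producing bin-averaged gap $\bar{f}_C^{(1)}-\bar{f}_C^{(-1)}\asymp g/z^2$; and (ii) $z\le g$, where each algorithm bin inside the bump sees gap $\asymp 1/z$.

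Since the elimination threshold $U(m_{C,i},T,C)\asymp\sqrt{\log T/m_{C,i}}$ is matched against $m_{C,i}\asymp(t_i-t_{i-1})/g$, the algorithm eliminates at the end of batch $i\in\{1,2\}$ iff $t_i-t_{i-1}\gtrsim z^4\log T/g$ in regime~(i), or $t_i-t_{i-1}\gtrsim gz^2\log T$ in regime~(ii). When no elimination occurs in either batch, the algorithm plays both arms uniformly inside the bump, giving regret $\asymp T/z^2$; when elimination first occurs at batch $i^\star$, the pre-elimination regret dominates and scales as $\asymp t_{i^\star}/z^2$. Combining the two regimes with the three elimination outcomes $i^\star\in\{1,2,\text{none}\}$ yields six attack curves in $z$, each maximized by saturating the corresponding elimination inequality, which reduces the adversary's response to an explicit function of $(g,t_1,t_2)$.

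Writing $g=T^\rho$, $t_1=T^{\tau_1}$, $t_2=T^{\tau_2}$, the adversary's value is the pointwise maximum of the six reduced curves, and the algorithm's minimax regret is obtained by minimizing that maximum over $(\rho,\tau_1,\tau_2)$. Three attacks are binding at the optimum: (a) the pure-bandit attack ($z=\Theta(1)$) forcing regret $\gtrsim t_1$; (b) the coarse-bump attack with $z^2\asymp t_1/(g\log T)$, eliminated only at the end of batch~2, forcing regret $\gtrsim t_2\,g\log T/t_1$; and (c) the fine-bump attack with $z^4\asymp g(t_2-t_1)/\log T$, never eliminated, forcing regret $\gtrsim T\sqrt{\log T/(g(t_2-t_1))}$. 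Balancing the three exponents yields the saddle $\rho=1/6$, $\tau_1=1/2$, $\tau_2=5/6$ with minimax value $\Theta(T^{1/2})$, so $R_T(\hat{\pi}_{\mathrm{static}})\ge\tilde{C}_1 T^{1/2}=\tilde{C}_1 T^{9/19+1/38}$, which gives $\kappa=1/38>0$. The main obstacle will be verifying the combinatorial minimax: one must check that no degenerate choice, for instance $t_1\ll g\log T$ (so batch~1 is essentially wasted) or $g\gg T^{5/19}$ (so the fine-bump regime collapses into the coarse one), escapes this $T^{1/2}$ floor. This requires systematically comparing the six attack curves across each subregion of the $(\rho,\tau_1,\tau_2)$-simplex and handling the transition $z\asymp g$ between the two regimes with care.
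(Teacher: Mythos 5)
Your core mechanics coincide with the paper's: the same single\nobreakdash-bump instance of width $1/z$ and height $\asymp z^{-1}$, the same dichotomy between $z>g$ (bin\nobreakdash-averaged gap $\asymp g/z^{2}$) and $z\le g$ (per\nobreakdash-bin gap $\asymp 1/z$ on the central bins), and the same mechanism for showing elimination fails with constant probability when the gap is below $\asymp\sqrt{\log T\cdot g/(t_i-t_{i-1})}$ (the paper packages this as Lemma~\ref{lemma:elimination-failure}, a Hoeffding bound showing $\mathbb{P}(\bar{Y}_{B,i}^{(1)}-\bar{Y}_{B,i}^{(-1)}>U)\le t_i/T$). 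Where you diverge is in handling the quantifier over $(g,t_1,t_2)$: you set up a genuine three\nobreakdash-parameter minimax over six attack curves, whereas the paper first invokes the already\nobreakdash-proved general lower bound~(\ref{eq:lower-bound-final}) from Theorem~\ref{thm:lower-bound} to argue that any grid with $t_1$ or $t_2$ deviating polynomially from $T^{9/19}$, $T^{15/19}$ already incurs regret $T^{9/19+\kappa'}$, and then only sweeps the single remaining parameter $g$ through three cases ($g\gg T^{3/19}$, $g\ll T^{3/19}$, $g\asymp T^{3/19}$). That shortcut is worth adopting: it collapses your simplex analysis to a one\nobreakdash-dimensional case split and makes all the ``degenerate choice'' worries you flag (tiny $t_1$, huge $g$) disappear for free. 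What your route buys in exchange is a sharper quantitative picture --- you identify that the best \emph{static} grid is $t_1\asymp T^{1/2}$, $t_2\asymp T^{5/6}$, $g\asymp T^{1/6}$ (not the dynamic-optimal $T^{9/19}$, $T^{15/19}$) with minimax value $\Theta(T^{1/2})$, which I checked is consistent across the three feasibility regions $g^{3}\lessgtr t_1,t_2$; the paper's worst case (its Case~3) lands on the same $T^{1/2}=T^{9/19+1/38}$.

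As written, though, your proof is incomplete at exactly the step you flag: you assert the saddle and the value but never carry out the verification that every point of the $(\rho,\tau_1,\tau_2)$\nobreakdash-simplex is covered by some attack at level $T^{1/2-o(1)}$, including the regime transitions at $z\asymp g$ and the feasibility constraints $z\ge1$, $z\le g$ or $z>g$ attached to each curve (e.g., your attack (b) requires $t_1\lesssim g^{3}\log T$ and must be replaced by its regime\nobreakdash-(i) analogue $t_2\sqrt{\log T/(gt_1)}$ when $t_1\gg g^{3}$). Until that case sweep is done --- or replaced by the paper's reduction via~(\ref{eq:lower-bound-final}) --- the argument does not yet establish the theorem. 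One further small point: in the regime $z\le g$ you should restrict to the bins in the middle half of the bump's support when claiming gap $\asymp1/z$, since the edge bins see a much smaller averaged gap; the paper does this by summing only over $H/4\le l\le 3H/4$.
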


\noindent While the formal proof is deferred to Appendix~\ref{sec:Proof-of-Theorem-failure},
we would like to immediately point out the intuition underlying the
failure of static binning. 

\paragraph{Necessary choice of grid $\Gamma$.} It is evident from
the proof of the minimax lower bound (Theorem~\ref{thm:lower-bound})
that one needs to set $t_{1}\asymp T^{9/19}$, and $t_{2}\asymp T^{15/19}$.
Otherwise, the inequality~(\ref{eq:lower-bound-final}) guarantees
the worst-case regret of $\hat{\pi}_{\mathrm{static}}$ exceeds the
optimal one $T^{\frac{9}{19}}$. Consequently, we can focus on the
algorithm with $t_{1}\asymp T^{9/19}$, $t_{2}\asymp T^{15/19}$,
and only consider the design choice $g$. 

\paragraph{Why fixed $g$ fails.} As a baseline for comparison,
recall that in the optimal algorithm with dynamic binning, we set
$g_{0}\asymp T^{3/19}$, and $g_{0}g_{1}\asymp T^{5/19}$ so that
the worst case regret in three batches are all on the order of $T^{\frac{9}{19}}$.
In view of this, we split the choice of $g$ into three cases. 
\begin{itemize}
\item Suppose that $g\gg T^{3/19}$. In this case, we can construct an instance
such that the reward difference only appears on an interval with length
$1/z\gg1/g$; see Figure~\ref{fig:N_ge_g}. In other words, the static
binning is finer than that in the reward instance. As a result, the
number of pulls in the smaller bin (used by the algorithm) in the
first batch is not sufficient to tell the two arms apart, that is
with constant probability, arm elimination will not happen after the
first batch. This necessarily yields the blowup of the regret in the
second batch. 
\item Suppose that $g\ll T^{3/19}$. In this case, we can construct an instance
such that the reward difference only appears on an interval with length
$1/z\ll1/g$; see Figure~\ref{fig:N_ll_g}. In other words, the static
binning is coarser than that in the reward instance. Since the aggregated
reward difference on the larger bin is so small, the number of pulls
in the larger bin (used by the algorithm) in the first batch is still
not sufficient to result in successful arm elimination. Again, the
regret on the second batch blows up. 
\item Suppose that $g\asymp T^{3/19}$. Since this choices matches $g_{0}$
used in the optimal dynamic binning algorithm, there is no reward
instance that can blow up the regret in the first two batches. Nevertheless,
since $g\ll g_{0}g_{1}\asymp T^{5/19}$, one can construct the instance
similar to the previous case (i.e., Figure~\ref{fig:N_ll_g}) such
that the regret on the third batch blows up. 
\end{itemize}
\begin{figure}     
\centering    

\begin{tikzpicture}[scale=1.2] 
\draw[thick, -latex] (0,0) -- (12,0) 
node[anchor=north west] {$x$};    
       
\foreach \x in {1, 2, ..., 11} 
{         \draw[blue!50, thick] (\x,0.1) -- (\x,-0.1);       }   
          
\draw[orange, thick] (0,0) -- (2,2.5);       
\draw[orange, thick] (2,2.5) -- (4,0);        
   
\draw        [decorate,decoration={brace,amplitude=5pt,raise=1pt}] (1,0) -- (2,0)       node [pos=0.5,anchor=south,yshift=5pt] {$1/g$};     
      
\draw [decorate,decoration={brace,amplitude=10pt,mirror,raise=2pt}] (0,0) -- (4,0)       node [pos=0.5,anchor=north,yshift=-15pt] {$1/z$};    
     
\draw[dashed, blue] (1,0) -- (1,1.25);
\draw[dashed, blue] (3,0) -- (3,1.25) node[midway, anchor=west, xshift=-0.05cm, yshift=-0.2cm] {$\delta/2$};

\end{tikzpicture}     
\caption{Instance with $g>z$. Each bin $B$ produced by $\hat{\pi}_{\mathrm{static}}$ has width $1/g$.}  
\label{fig:N_ge_g} 
\end{figure}\begin{figure}     
\centering     
\begin{tikzpicture}[scale=1.2] 

\draw[thick, -latex] (0,0) -- (12,0) node[anchor=north west] {$x$};            

\foreach \x in {2, 4, 6, 8, 10} {         \draw[blue!50, thick] (\x,0.1) -- (\x,-0.1);       }              

\draw[orange, thick] (0,0) -- (1,1.5);       
\draw[orange, thick] (1,1.5) -- (2,0);            

\draw        [decorate,decoration={brace,amplitude=5pt,mirror,raise=1pt}] (0,0) -- (2,0)       node [pos=0.5,anchor=south,yshift=-21pt] {$1/z$};            

\draw [decorate,decoration={brace,amplitude=10pt,mirror,raise=14pt}] (0,0) -- (6,0)       node [pos=0.5,anchor=north,yshift=-22pt] {$1/g$};
      
\draw[dashed, blue] (0.5,0) -- (0.5,0.75);     
\draw[dashed, blue] (1.5,0) -- (1.5,0.75) node[midway, anchor=west, xshift=-0.7cm, yshift=0cm] {$\delta/2$};          
\end{tikzpicture}    
\caption{Instance with $g<z$. Each bin $B$ produced by $\hat{\pi}_{\mathrm{static}}$ has width $1/g$.}     
\label{fig:N_ll_g} 
\end{figure}

\section{Adaptivity to margin parameter $\alpha$}
In this section, we provide some discussions on the possibility of adapting to the margin parameter $\alpha$ if it is unknown. Recall in Section~\ref{sec:algo}, the grid choice of Algorithm~\ref{alg:adaptive-bin} requires knowledge of $\alpha$. One may ask if such knowledge is essential in obtaining small regret. Unfortunately, the following theorem demonstrates that the price of not knowing $\alpha$ is at least a polynomial increase in regret. Denote $\gamma(\alpha^\star)=\beta(\alpha^\star+1)/(2\beta+d)=\gamma^\star$.
\begin{theorem}\label{thm:alpha-lower}

Consider $\beta=d=1$. For any algorithm that does not know the true margin parameter $\alpha^\star$, there exists a choice of $\alpha^\star$  such that
\[\sup_{\mathcal{F}(\alpha^\star,1)}
R_{T}(\pi)
\ge\tilde{D}_3
T^{\frac{1-\gamma^\star}{1-(\gamma^{\star})^M} \;+\; \kappa_1}
\cdot(\frac{1}{M})^{\tilde{D}_{4}},\]
for some $\tilde{D}_{3}, \tilde{D}_{4}, \kappa_1>0$ that are independent
of $T$.
\end{theorem}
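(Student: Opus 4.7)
The plan is a two-hypothesis argument. I fix two margin values $\alpha_1 < \alpha_2$ both satisfying $\alpha_i\beta\leq 1$, giving $\gamma_1<\gamma_2$ with $\gamma_i=\beta(1+\alpha_i)/(2\beta+d)$, and aim to show that any algorithm $\pi$ which is oblivious to $\alpha^\star$ must have worst-case regret on at least one of $\mathcal{F}(\alpha_1,1)$ or $\mathcal{F}(\alpha_2,1)$ exceeding the corresponding minimax optimum by a polynomial factor $T^{\kappa_1}$. The adversary then picks $\alpha^\star\in\{\alpha_1,\alpha_2\}$ to be whichever class is worse for $\pi$.

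The first step is to extract from the proof of Theorem~\ref{thm:lower-bound} the sharper intermediate estimate that for any $M$-batch policy $\pi$ with prespecified grid $\Gamma=\{t_j\}$,
\[
\sup_{f\in\mathcal{F}(\alpha,1)} R_T(\pi;f) \;\gtrsim\; \max_{1\leq j\leq M}\frac{t_j}{(t_{j-1}\vee 1)^{\gamma(\alpha)}}.
\]
This is exactly the intermediate bound appearing in~(\ref{eq:lower-bound-final}) before passing to the balanced geometric mean. For adaptive grids, the bad-event construction underlying Theorem~\ref{thm:lower-bound-adaptive} preserves this estimate up to a multiplicative $(1/M)^{\tilde{D}_4}$ loss, with $\{t_j\}$ read as a deterministic envelope containing the random grid on a sufficiently likely event.

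The second step is a grid-pinning argument. Write $t_j=T^{a_j}$ and $\Sigma_i\coloneqq\sum_{l=0}^{M-1}\gamma_i^l$, so that the minimax optimum on $\mathcal{F}(\alpha_i,1)$ equals $T^{1/\Sigma_i}$. Suppose the regret of $\pi$ on $\mathcal{F}(\alpha_1,1)$ is at most $T^{1/\Sigma_1+\kappa}$. The Step~1 bound then imposes $a_j-\gamma_1 a_{j-1}\leq 1/\Sigma_1+\kappa$ for every $j$, together with $a_M=1$. Unrolling backwards from $a_M=1$ gives $a_j\geq (\sum_{l=0}^{j-1}\gamma_1^l)/\Sigma_1 - O(\kappa)/\gamma_1^{M-j}$, so each $a_j$ is squeezed around its $\gamma_1$-optimal value with an additive slack of order $\kappa\,\gamma_1^{-(M-j)}$; in particular $a_1\geq 1/\Sigma_1-O(\kappa)/\gamma_1^{M-1}$. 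Feeding this into the Step~1 bound for $\alpha_2$ yields
\[
R_T(\pi;\alpha_2)\;\gtrsim\;(1/M)^{\tilde{D}_4}\cdot t_1\;\gtrsim\;(1/M)^{\tilde{D}_4}\cdot T^{1/\Sigma_1-O(\kappa)/\gamma_1^{M-1}},
\]
which, since $1/\Sigma_1>1/\Sigma_2$, exceeds $(1/M)^{\tilde{D}_4}\cdot T^{1/\Sigma_2+\kappa_1}$ as long as $\kappa$ is small enough that the slack does not erase the gap $1/\Sigma_1-1/\Sigma_2$. Balancing the two cases (regret on $\alpha_1$ already exceeds $T^{1/\Sigma_1+\kappa}$, or regret on $\alpha_2$ exceeds $T^{1/\Sigma_2+\kappa_1}$) produces $\kappa_1=\Theta(\gamma_1^{M-1}(\gamma_2-\gamma_1))$, strictly positive for any fixed $M$.

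The main obstacle is making the adaptive-grid step rigorous. In the prespecified-grid case, each $t_j$ is deterministic and the pinning step is a purely algebraic consequence of Step~1. For adaptive grids, $t_j$ is a random variable that can a priori respond differently under the two instance classes. To handle this I would mirror the construction in the appendix proof of Theorem~\ref{thm:lower-bound-adaptive}: build families of hard instances coupled across the two margin classes and identify a bad event on which the algorithm's grid is trapped in a deterministic window with probability at least $\Omega((1/M)^{\tilde{D}_4})$, then apply the fixed-grid pinning argument restricted to this event. The $(1/M)^{\tilde{D}_4}$ factor in Theorem~\ref{thm:alpha-lower} is inherited from this reduction, while $T^{\kappa_1}$ is the pure cost of not knowing $\alpha^\star$, meaningful precisely in the regime $M\lesssim\log\log T$ emphasized throughout the paper.
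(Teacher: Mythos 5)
Your overall strategy is the same as the paper's: exploit the fact that the first batch size $t_1$ must be fixed before any data is seen, together with the observation that the $\alpha$-optimal value of $t_1$ scales as $T^{(1-\gamma(\alpha))/(1-\gamma(\alpha)^M)}$ and hence moves polynomially in $\alpha$, so no single choice of $t_1$ can serve two margin values simultaneously. The paper runs this as a direct case split on the deterministic $t_1$ (large $t_1$: take $\alpha^\star=1$ and charge the first batch via a constant-gap instance; small $t_1$: take $\alpha^\star$ small and charge the later batches), whereas you run the contrapositive (small regret on $\mathcal{F}(\alpha_1,1)$ forces $t_1$ large, which blows up the regret on $\mathcal{F}(\alpha_2,1)$). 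These are logically the same dichotomy, and your fixed-grid algebra, including the exponent $\kappa_1=\Theta\bigl(\gamma_1^{M-1}(\gamma_2-\gamma_1)\bigr)$, is consistent with the paper's final exponent gain $c_2(\gamma^\star)^{M-1}\tfrac{1-\gamma^\star}{1-(\gamma^\star)^{M-1}}$.

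The gap is in your Step 1 and the pinning chain. For adaptive grids, the paper never establishes (and your argument genuinely needs) the intermediate bound $\sup_f R_T(\pi;f)\gtrsim\max_j t_j/(t_{j-1}\vee1)^{\gamma}$ with the random $t_2,\dots,t_{M-1}$ appearing inside; Theorem~\ref{thm:lower-bound-adaptive} only delivers the balanced rate via a single bad event $A_{m^\star}$ with $\mathbb{P}(A_{m^\star})\geq 1/M$, not simultaneous constraints $a_j-\gamma_1 a_{j-1}\leq 1/\Sigma_1+\kappa$ for every $j$. Since your unrolling from $a_M=1$ down to $a_1$ passes through all of these random grid points, the fixed-grid algebra does not transfer, and the "deterministic envelope" you invoke is precisely the unproved step. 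The repair is the one the paper actually uses, and it is simpler than what you sketch: because only $t_1$ is needed for the $\alpha_2$ branch and $t_1$ is deterministic, one cases on $t_1$ directly and, in the small-$t_1$ branch, proves the single-class statement "$t_1$ too small $\Rightarrow$ large regret on $\mathcal{F}(\alpha^\star,1)$" by anchoring the reference times $T_2,\dots,T_M$ of the bad-event construction at $t_1$ (so that every ratio $T_i/T_{i-1}^{\gamma^\star}$ exceeds the optimum) and rerunning the Appendix~\ref{sec:adaptive-lower-proof} argument over the events $A_i$, $2\le i\le M$. In particular, no coupling of hard instances "across the two margin classes" is required; that part of your sketch addresses a non-issue while leaving the real one (pinning through random intermediate grid points) unresolved.
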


\noindent See Appendix~\ref{sec:alpha-lower-proof} for the proof. 

\medskip
Theorem~\ref{thm:alpha-lower} is mostly interesting when the number $M$ of batches is small. In particular, when $M$ is a constant, Theorem~\ref{thm:alpha-lower} demonstrates that any algorithm that does not know $\alpha^\star$ will incur a regret that is polynomially larger than the optimal regret attainable by Algorithm~\ref{alg:adaptive-bin} with the knowledge of $\alpha^\star$. 
This result on separation shows that batch learning for nonparametric bandits is indeed much harder than the fully online case to some extent, where adaptivity to $\alpha^\star$ could be achieved for free \cite{perchet2013multi}.

The intuition behind the proof of Theorem~\ref{thm:alpha-lower} is that since the algorithm does not know $\alpha^\star$, it has little hope to pick the first batch size $t_1$ optimally. If $t_1$ is too large, then the adversary can choose a large $\alpha^\star$, which corresponds to the family of reward functions with larger gaps, so that the algorithm's regret during the first batch explodes. If $t_1$ is too small, then the adversary can choose a small $\alpha^\star$, which corresponds to the family of reward functions with smaller gaps.
In this case, the algorithm's knowledge gathered during the first batch is not enough to distinguish the arms and its regret will explode in later batches. 

\section{Discussions}

In this paper, we characterize the fundamental limits of batch learning
in nonparametric contextual bandits. In particular, our optimal batch
learning algorithm (i.e., Algorithm~\ref{alg:adaptive-bin}) is able
to match the optimal regret in the fully online setting with only
$O(\log\log T)$ policy updates. Our work open a few
interesting avenues to explore in the future.

\paragraph{Extensions to multiple arms.} With slight modification,
our algorithm works for nonparametric contextual bandits with more
than two arms. However, it remains unclear what the fundamental limits
of batch learning are in this multi-armed case (i.e., when $K$ is large). 

\paragraph{Improving the log factor.}Comparing the upper and lower
bounds, it is evident that Algorithm~\ref{alg:adaptive-bin} is near-optimal
up to log factors. It is certainly interesting to improve this log
factor, either by strengthening the lower bound, or making the upper
bound more efficient. 

\paragraph{Adapting to the margin parameter.}While we have shown that adaptivity to the margin parameter is not possible when the batch constraint is stringent, i.e., when $M$ is constant, it leaves open the question of designing optimal adaptive algorithm when $M$ is large, say $M \asymp \log \log T$. In fact, when $M=T$, i.e., in the fully online setting, \cite{perchet2013multi} provides an adaptively binned successive elimination algorithm that is capable of adapting to the margin parameter optimally. 

\paragraph{Gap-dependent regret bounds.} The separation between the arms for nonparametric bandits is controlled by the margin parameter $\alpha$. The current paper focuses on the family of non-degenerate nonparametric bandit instances. When $\alpha\rightarrow\infty$, the two arms will have constant gap over the entire covariate
space, meaning the instance is essentially reduced to the multi-armed bandits without covariates. The BaSE policy with geometric grid from \cite{zijun2020batch} can then be applied to obtain a gap-dependent regret bound. Whether one can develop a procedure that bridges these two regimes under the batched setting is an interesting future direction.

\subsection*{Acknowledgements} CM is partially supported by the National Science Foundation via grant DMS-2311127. 

\bibliographystyle{plain}
\bibliography{Batch}

\clearpage
\appendix
\renewcommand{\thesection}{\Alph{section}}

\section{Proof of Theorem~\ref{thm:lower-bound-adaptive}}\label{sec:adaptive-lower-proof}

Define $b\asymp T^{(1-\gamma)/(1-\gamma^{M})}$. For each $1 \leq m \leq M$, we set $T_{m}=\lfloor b^{(1-\gamma^{m})/(1-\gamma)}\rfloor$,  
$z_{m}=\lceil(36T_{m-1}M^{2})^{1/(2\beta+d)}\rceil$. Define the event 
\[A_{m}=\{t_{m-1}<T_{m-1}<T_{m}\le t_{m}\}.\]
Intuitively, $A_{m}$ models the event when the algorithm's
selected grid points $t_{m-1}$ and $t_{m}$ are suboptimal. 
When $A_m$ happens, the goal is to design a problem instance such that using observations up $t_{m-1}$
cannot distinguish the optimal arm and therefore the policy
must incur a large regret between $t_{m-1}$ and $t_{m}$.

\subsection{Construction of the hard instances}

Consider a regular partition of $[0,1]^{d}$ into $M^{d}$ bins of
equal width. Denote the bins by $C_{i}$ for $i=1,\dots,M^{d}$. For
any $m\in[M],$ define $I_{m}=\{i\in[M^{d}]:(m-1)M^{d-1}+1\le i\le mM^{d-1}\}$.
Given $i\in I_{m}$, we further consider a regular partition of $C_{i}$
into $z_{m}^{d}$ bins, which are denoted by $\binm$ for $j=1,\dots,z_{m}^{d}$.
Denote by $\centerm$ the center of $\binm$.

Define $s_{m}=\lceil(M^{-\alpha_{}\beta})z_{m}^{d-\alpha_{}\beta}\rceil$
and $s=M^{d-1}\sum_{m=1}^{M}s_{m}$. Denote by $\Omega_{m}=\{\pm1\}^{s_{m}}$.
Define a set of binary sequences
\[
\Omega=\{\omega\in\{\pm1\}^{s}:1\le m\le M,i\in I_{m},\omega_{i}\in\Omega_{m}\},
\]
 where for $i\in I_m$, $\omega_i$ stands for the $i$-th block in $\omega\in\{\pm1\}^{s}$ of size $s_m$. For any $\omega\in\Omega$, define a function $f_{\omega}:[0,1]^{d}\mapsto\mathbb{R}$:
\[
f_{\omega}(x)=\frac{1}{2}+\sum_{m=1}^{M}\sum_{i\in I_{m}}\sum_{j=1}^{s_{m}}\omega_{i,j}^{m}\xi_{i,j}^{m}(x),
\]
where $\xi_{i,j}^{m}(x)=D_{\phi}(Mz_{m})^{-\beta}\phi(2Mz_m(x-\centerm))\mathbf{1}\{x\in \binm\}$. It is straightforward to check $f_\omega$ satisfies the smoothness condition.

Define the family of reward instances
\[
\mathcal{C}\coloneqq\left\{ f_{1}(x)=f_{\omega}(x),f_{-1}(x)=\frac{1}{2}\mid\omega\in\Omega\right\} .
\]
We now verify the margin condition. One has
\begin{align*}
    P_{X}\left(0<\left|f_{\omega}(X)-\frac{1}{2}\right|\leq D_\phi\delta\right)
    &= \sum_{m=1}^{M}\sum_{i\in I_{m}}\sum_{j=1}^{s_{m}}P_{X}(0<\left|f_{\omega}(X)-\frac{1}{2}\right|\leq D_\phi\delta,X\in\binm)\\
    &= \sum_{m=1}^{M}\sum_{i\in I_{m}}\sum_{j=1}^{s_{m}}P_{X}(0<\phi(2Mz_m(x-\centerm))\leq \delta(Mz_{m})^{\beta},X\in\binm)\\
    &\le\sum_{m=1}^{M}\sum_{i\in I_{m}}M^{-d}(1+d)\delta^\alpha=(1+d)\delta^\alpha,
\end{align*}
where the penultimate step applies Lemma~\ref{lemma:margin-check}. Hence, the margin assumption is satisfied, and $\mathcal{C}\subseteq\mathcal{F}(\alpha,\beta)$.

\subsection{Lower bounding the regret during the $m$-th batch}

Since the worst-case inferior sampling rate is lower bounded by the average over the family $\Omega$, 
\begin{align}
\sup_{(f,\frac{1}{2})\in\instfam(\alpha,\beta)}&S_{T_m}(\pi,f)\nonumber \\
&\ge\mathbb{E}_{\omega\sim\omgdist}\eoverpi\left[\sum_{t=1}^{T_m}\mathbf{1}\{\pi_t(X_t)\neq\pi^\star(X_t),f_\omega(X_t)\neq\frac{1}{2}\}\right]\nonumber \\
 & \overset{\mathrm{(i)}}{\ge}\sum_{t=T_{m-1}+1}^{T_{m}}\sum_{i\in I_{m}}\sum_{j=1}^{s_{m}}\mathbb{E}_{\omega\sim\omgdist}\eoverpi^{t}\left[\mathbf{1}\{X_{t}\in\binm,\pi_{t}(X_{t})\neq\omega_{i,j}^{m}\}\right]\nonumber \\
 & =(Mz_{m})^{-d}\sum_{t=T_{m-1}+1}^{T_{m}}\sum_{i\in I_{m}}\sum_{j=1}^{s_{m}}\frac{1}{2^{s}}\sum_{\wloo\in\omgloo}\underbrace{\sum_{l\in\{\pm1\}}\mathbb{E}_{\pi,\omega_{i,j}^{m}=l}^{t}P_{X}(\pi_{t}(X_{t})\neq l\mid X_{t}\in\binm)}_{U_{m,i,j}^{t}}.\label{eq:regret-to-test}
\end{align}
Here, step (i) uses the fact that regret is only incurred on $\binm$'s and the optimal action is specified by $\omega_{i,j}^m$. Besides, we use $\omega_{-(i,j)}$ to represent the vector after leaving out the $j$-th entry in the $i$-th block of $\omega$.
By Le Cam's method, one has
\begin{align*}
U_{m,i,j}^{t} & \ge1-\|\pminus^{t}-\pplus^{t}\|_{\mathrm{TV}}\\
 & \ge1-\|\pminus^{T_{m}}-\pplus^{T_{m}}\|_{\mathrm{TV}}\\
 & =\int\min\left\{ \mathrm{d}\pminus^{T_{m}},\mathrm{d}\pplus^{T_{m}}\right\} \\
 & \ge\int_{A_{m}}\min\left\{ \mathrm{d}\pminus^{T_{m}},\mathrm{d}\pplus^{T_{m}}\right\} ,
\end{align*}
where the second inequality  is due to $t\le T_m$. Since the available
observations for $\pi$ at $T_{m}$ are the same as those at $T_{m-1}$ under $A_{i}$, we continue to lower bound
\begin{align*}
U_{m,i,j}^{t}  
 &\ge\int_{A_{m}}\min\left\{ \mathrm{d}\pminus^{T_{m-1}},\mathrm{d}\pplus^{T_{m-1}}\right\} \\
 &=\frac{1}{2}\int_{A_{m}}\left(\mathrm{d}\pminus^{T_{m-1}}+\mathrm{d}\pplus^{T_{m-1}}-|\mathrm{d}\pminus^{T_{m-1}}-\mathrm{d}\pplus^{T_{m-1}}|\right)\\
 &\ge\frac{1}{2}\left(\pminus^{T_{m-1}}(A_{m})+\pplus^{T_{m-1}}(A_{m})\right)-\|\pminus^{T_{m-1}}-\pplus^{T_{m-1}}\|_{\mathrm{TV}}\\
 &\ge\frac{1}{2}\left(\pminus(A_{m})+\pplus(A_{m})\right)-\frac{1}{2M},
\end{align*}
where the last step applies Lemma~\ref{lemma:single-bin-tv}. Plugging the above back to~(\ref{eq:regret-to-test}), we obtain

\begin{align*}
\sup_{f\in\instfam(\alpha,\beta)}& S_{T_m}(\pi,f) \nonumber\\
& \ge (Mz_{m})^{-d}\sum_{t=T_{m-1}+1}^{T_{m}}\sum_{i\in I_{m}}\sum_{j=1}^{s_{m}}\frac{1}{2^{s+1}}\sum_{\wloo\in\omgloo}\left(\pminus(A_{m})+\pplus(A_{m})-\frac{1}{M}\right)\nonumber \\
 & =(Mz_{m})^{-d}\sum_{t=T_{m-1}+1}^{T_{m}}\sum_{i\in I_{m}}\sum_{j=1}^{s_{m}}\frac{1}{2}\left(\mathbb{E}_{\omega\sim\omgdist}\mathbb{P}_{\pi,\omega}(A_{m})-\frac{1}{2M}\right)\nonumber\\
 & =(Mz_{m})^{-d}(T_{m}-T_{m-1})M^{d-1}s_{m}\left(\mathbb{E}_{\omega\sim\omgdist}\mathbb{P}_{\pi,\omega}(A_{m})-\frac{1}{2M}\right)\nonumber\\
 & \asymp M^{-1-\alpha\beta}T_{m}z_{m}^{-\alpha\beta}\left(\mathbb{E}_{\omega\sim\omgdist}\mathbb{P}_{\pi,\omega}(A_{m})-\frac{1}{2M}\right)\nonumber.
\end{align*}
By Lemma~\ref{inferior-to-regret}, we obtain
\begin{align}\label{eq:reg-to-m}
       \sup_{f\in\instfam(\alpha,\beta)} R_{T_m}(\pi,f)
     &\apprge T_{m}^{-\frac{1}{\alpha}}\left[\sup_{(f,\frac{1}{2})\in\mathcal{F}(\alpha,\beta)}S_{T_{m}}(\pi;f)\right]^{\frac{1+\alpha}{\alpha}}\nonumber\\
     &\apprge M^{-\frac{(\alpha\beta+1)(1+\alpha)}{\alpha}}T_{m}z_{m}^{-\beta(1+\alpha)}\left(\mathbb{E}_{\omega\sim\omgdist}\mathbb{P}_{\pi,\omega}(A_{m})-\frac{1}{2M}\right)^{\frac{1+\alpha}{\alpha}}.
\end{align}
Since $\sum_{k=1}^M\mathbb{E}_{\omega\sim\omgdist}\mathbb{P}_{\pi,\omega}(A_{k})\ge1$, there exists some $m^\star\in[M]$ such that $\mathbb{E}_{\omega\sim\omgdist}\mathbb{P}_{\pi,\omega}(A_{m^\star})\ge1/M$. Because relation~\eqref{eq:reg-to-m}
 holds for any $m\in[M]$, we reach
 \begin{align*}
     \sup_{f\in\instfam(\alpha,\beta)} R_{T}(\pi,f)
     &\apprge M^{-\frac{(\alpha\beta+1)(1+\alpha)}{\alpha}}T_{m^\star}z_{m^\star}^{-\beta(1+\alpha)}\left(\mathbb{E}_{\omega\sim\omgdist}\mathbb{P}_{\pi,\omega}(A_{m^\star})-\frac{1}{2M}\right)^{\frac{1+\alpha}{\alpha}}\\
     &\ge M^{-\frac{(\alpha\beta+1)(1+\alpha)}{\alpha}}T_{m^\star}z_{m^\star}^{-\beta(1+\alpha)}\left(\frac{1}{M}-\frac{1}{2M}\right)^{\frac{1+\alpha}{\alpha}}\\
     &\asymp M^{-\frac{(\alpha\beta+2)(1+\alpha)}{\alpha}}T_{m^\star}z_{m^\star}^{-\beta(1+\alpha)}
     \apprge M^{-\frac{(\alpha\beta+2)(1+\alpha)}{\alpha}}\frac{T_{m^\star}}{T^\gamma_{m^\star-1}}M^{-2\gamma}\apprge (\frac{1}{M})^{\tilde{D}_2}\cdot T^{\frac{1-\gamma}{1-\gamma^M}}.
 \end{align*}

\subsection{Proof of helper lemmas}
\begin{lemma}\label{lemma:kl-neg-pos}Fix $z>0$ and suppose $f_{\omega}\in\mathcal{C}_{z}$.
For any $n\in[T]$ and any policy $\pi$, one has
\[
\mathrm{KL}(\mathbb{P}_{\pi,f_{\omega_{[-j]}^{-1}}}^{n},\mathbb{P}_{\pi,f_{\omega_{[-j]}^{1}}}^{n})\le2nz^{-(2\beta+d)}.
\]

\end{lemma}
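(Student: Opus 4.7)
My plan is to apply the chain rule for KL divergence across the $n$ observations and show that only the reward terms contribute, then bound each reward term using a Bernoulli KL estimate. Concretely, write the observation at step $l$ as $O_l = (X_l, Y_l^{(A_l)})$ where I treat the full online observation set as an upper bound on any batched variant (so I only need to handle the fully-online case to conclude the stated inequality). By the chain rule,
\[
\mathrm{KL}(\mathbb{P}_{\pi, f_{\omega_{[-j]}^{-1}}}^{n}, \mathbb{P}_{\pi, f_{\omega_{[-j]}^{1}}}^{n}) = \sum_{l=1}^{n} \mathbb{E}_{\mathbb{P}_{\pi, f_{\omega_{[-j]}^{-1}}}^{n}}\!\left[\mathrm{KL}\!\left(\mathbb{P}_{-1}(O_l \mid O_{1:l-1}),\, \mathbb{P}_{+1}(O_l \mid O_{1:l-1})\right)\right].
\]
Since $X_l \sim P_X$ independently of the reward function, and $A_l$ is a deterministic function of $(O_{1:l-1}, X_l)$ (same policy $\pi$ under both measures), these components contribute zero to the conditional KL. The only nonzero contribution comes from $Y_l^{(A_l)}$, whose law differs between the two instances only when $A_l = 1$ and $X_l \in C_j$, since $f_{\omega_{[-j]}^{-1}}$ and $f_{\omega_{[-j]}^{1}}$ agree off $C_j$ and agree on arm $-1$ everywhere.

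In the differing case, the rewards are $\mathrm{Ber}(1/2 - \varphi_j(X_l))$ versus $\mathrm{Ber}(1/2 + \varphi_j(X_l))$, where $\varphi_j(x) \le D_\phi z^{-\beta}$ with $D_\phi \le 1/4$, so both parameters lie in $[1/4, 3/4]$. Using the standard inequality $\mathrm{KL}(\mathrm{Ber}(p), \mathrm{Ber}(q)) \le (p-q)^2 / [q(1-q)]$ with $q(1-q) \ge 3/16$, one obtains
\[
\mathrm{KL}(\mathrm{Ber}(1/2 - \varphi_j(X_l)), \mathrm{Ber}(1/2 + \varphi_j(X_l))) \le \tfrac{64}{3}\,\varphi_j(X_l)^2 \le \tfrac{64}{3}\, D_\phi^2\, z^{-2\beta}.
\]

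Combining the two estimates, and using $P_X(X_l \in C_j) = z^{-d}$ since $P_X$ is uniform on $[0,1]^d$ and $C_j$ has volume $z^{-d}$, I get
\[
\mathrm{KL}(\mathbb{P}_{-1}^{n}, \mathbb{P}_{+1}^{n}) \le \sum_{l=1}^{n} \mathbb{E}[\mathbf{1}\{X_l \in C_j\}] \cdot \tfrac{64}{3}\, D_\phi^2\, z^{-2\beta} \le \tfrac{64}{3}\, D_\phi^2\, n\, z^{-(2\beta+d)}.
\]
Finally, using $D_\phi^2 \le 1/16$ makes the leading constant $\tfrac{64}{3}\cdot\tfrac{1}{16} = \tfrac{4}{3} \le 2$, yielding the advertised bound. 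The only step that requires a bit of care is the chain-rule reduction: I need to argue cleanly that the marginal of $X_l$ and the (deterministic given history) choice of $A_l$ cancel out so that only the reward contributes—this is where one must be careful about what the observation $O_l$ is in each setting—but after that the computation is a direct Bernoulli KL plus volume bound.
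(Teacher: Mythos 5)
Your proof is correct and follows essentially the same route as the paper's: a chain-rule decomposition of the KL over the trajectory, observing that only the reward component contributes (and only when arm $1$ is pulled in $C_j$), a quadratic bound on the Bernoulli KL using that both means lie in $[1/4,3/4]$, and the volume bound $P_X(C_j)=z^{-d}$ after dropping the action indicator. The paper compresses all of this into "the standard decomposition of KL divergence and Bernoulli reward structure" with constant $8(p-q)^2$ in place of your $\tfrac{16}{3}(p-q)^2$; your data-processing reduction from the batched information set to the fully online trajectory is also sound and matches how the lemma is used.
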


\begin{proof}We can compute
\begin{align*}
\mathrm{KL}(\mathbb{P}_{\pi,f_{\omega_{[-j]}^{-1}}}^{n},\mathbb{P}_{\pi,f_{\omega_{[-j]}^{1}}}^{n}) & \overset{\mathrm{(i)}}{\le}8\mathbb{E}_{\pi,f_{\omega_{[-j]}^{-1}}}[\sum_{t=1}^{n}(f_{\omega_{[-j]}^{-1}}(X_{t})-f_{\omega_{[-j]}^{1}}(X_{t}))^{2}\mathbf{1}\{\pi_{t}(X_{t})=1\}]\\
 & \overset{\mathrm{(ii)}}{\le}32D_{\phi}^{2}z^{-2\beta}\mathbb{E}_{\pi,f_{\omega_{[-j]}^{-1}}}[\sum_{t=1}^{n}\mathbf{1}\{\pi_{t}(X_{t})=1,X_{t}\in C_{j}\}]\\
 & \overset{\mathrm{(iii)}}{=}32D_{\phi}^{2}z^{-(2\beta+d)}\sum_{t=1}^{n}\mathbb{P}_{\pi,f_{\omega_{[-j]}^{-1}}}^{t}(\pi_{t}(X_{t})=1\mid X_{t}\in C_{j})\\
 & \overset{\mathrm{(iv)}}{\le}32D_{\phi}^{2}z^{-(2\beta+d)}n\le2nz^{-(2\beta+d)}.
\end{align*}
Here, step (i) uses the standard decomposition of KL divergence and
Bernoulli reward structure; step (ii) is due to the definition of
$f_{\omega}$; step (iii) uses $\mathbb{P}(X_{t}\in C_{j})=1/z^{d}$,
and step (iv) arises from $\mathbb{P}_{\pi,f_{\omega_{[-j]}^{-1}}}^{t}(\pi_{t}(X_{t})=1\mid X_{t}\in C_{j})\le1$
for any $1\le t\le n$. 
\end{proof}

\begin{lemma}\label{lemma:margin-check}
    Fix $m\in[M]$. For any $i\in I_m$, one has
    \[\sum_{j=1}^{s_{m}}P_{X}(0<\phi(2Mz_m(x-\centerm))\leq \delta(Mz_{m})^{\beta},X\in\binm)
    \le M^{-d}(1+d)\delta^\alpha.\]
\end{lemma}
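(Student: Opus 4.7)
My plan is to reduce each summand to a level-set volume computation via a change of variables and then aggregate using the defining bound on $s_m$.

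I would first substitute $y = 2Mz_m(x-\centerm)$. This sends $\binm$ diffeomorphically onto $[-1,1]^d$ with Jacobian $(2Mz_m)^d$ and simplifies the argument of $\phi$ to $\phi(y) = (1-\|y\|_\infty)^\beta$ on $\|y\|_\infty \le 1$. The condition $0 < \phi(y) \le \delta(Mz_m)^\beta$ is equivalent to the annular region $\{1-r \le \|y\|_\infty < 1\}$ with $r := \min(\delta^{1/\beta}Mz_m, 1) \in [0,1]$, whose Lebesgue measure equals $2^d[1-(1-r)^d] \le 2^d d r$ by the elementary inequality $1-(1-r)^d \le dr$ for $r \in [0,1]$. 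Dividing by the Jacobian and using that $P_X$ is uniform on $[0,1]^d$ yields the per-bin estimate
\[
P_X\bigl(0<\phi(2Mz_m(x-\centerm))\le\delta(Mz_m)^\beta,\,X\in\binm\bigr) \;\le\; (Mz_m)^{-d}\,d\min(\delta^{1/\beta}Mz_m,1).
\]

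To convert this into a bound involving $\delta^\alpha$, I would invoke the assumption $\alpha\beta\le 1$ via the one-line fact $\min(a,1)\le a^{\alpha\beta}$ valid for every $a \ge 0$ and every $\alpha\beta \in (0,1]$ (immediate by cases: if $a \le 1$, then $a \le a^{\alpha\beta}$ by monotonicity in the exponent; if $a \ge 1$, then $1 \le a^{\alpha\beta}$). Taking $a = \delta^{1/\beta}Mz_m$ bounds each summand by $d\,\delta^\alpha(Mz_m)^{\alpha\beta-d}$.

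Finally, I would sum over $j=1,\dots,s_m$ and use $s_m \le M^{-\alpha\beta}z_m^{d-\alpha\beta}+1$ to obtain
\[
\sum_{j=1}^{s_m} P_j \;\le\; d\,\delta^\alpha\bigl[M^{-d} \,+\, (Mz_m)^{-(d-\alpha\beta)}\bigr],
\]
whose leading term is already of the target form $M^{-d}\delta^\alpha$. The main care will lie in absorbing the residual $(Mz_m)^{-(d-\alpha\beta)}$ into the constant $(1+d)$: one must reconcile the regime $s_m\ge 2$ (ceiling inactive, where $z_m^{d-\alpha\beta} \ge M^{\alpha\beta}$ makes the residual dominated by $M^{-d}$) with the regime $s_m=1$ (ceiling active, $z_m$ small, where one instead returns to the sharper per-bin estimate and uses the trivial bound $P_1 \le (Mz_m)^{-d}$ together with the monotonicity in $\delta$). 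Modulo this constant-tracking step, the argument is a direct geometric computation — change of variables plus the annular volume estimate — and I do not anticipate a conceptual obstacle beyond this routine level-set analysis.
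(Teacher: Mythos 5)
Your main computation is the same as the paper's: both arguments rescale $\binm$ to a unit cube, identify the event with the annular level set $\{1-r\le\|y\|_\infty<1\}$, bound its volume via $1-(1-r)^d\le dr$, and invoke $\alpha\beta\le1$ to trade $\delta^{1/\beta}Mz_m$ for $\delta^{\alpha}(Mz_m)^{\alpha\beta}$. Your observation that $\min(a,1)\le a^{\alpha\beta}$ merges the paper's two-indicator case split (on $\delta(Mz_m)^{\beta}\gtrless1$) into a single line, which is tidier than the printed proof; up to that point everything is fine.

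The genuine issue is the step you defer as ``routine constant-tracking.'' Absorbing the residual $d\,\delta^{\alpha}(Mz_m)^{-(d-\alpha\beta)}$ into $M^{-d}(1+d)\delta^{\alpha}$ requires $(Mz_m)^{d-\alpha\beta}\ge M^{d}$, i.e.\ $z_m^{d-\alpha\beta}\ge M^{\alpha\beta}$ --- which is exactly the condition for the ceiling in $s_m=\lceil M^{-\alpha\beta}z_m^{d-\alpha\beta}\rceil$ to be inactive, so the residual is harmless only in the regime where you did not need the $+1$ in the first place. In the regime $s_m=1$ your fallback does not close the gap: the trivial bound $(Mz_m)^{-d}$ is useless for small $\delta$, while the sharper per-bin estimate gives roughly $d\,\delta^{1/\beta}(Mz_m)^{1-d}$, which for $d=1$ and $\alpha\beta=1$ equals $\delta^{\alpha}$ exactly --- strictly larger than the claimed $M^{-1}(1+d)\delta^{\alpha}$ as soon as $M>1+d$. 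In that boundary case ($d=\alpha\beta=1$, where $s_m=1$ for every $m$) the stated inequality actually fails, so no amount of constant-tracking rescues the argument as written. To be fair, the paper's own last display has the same blind spot: it implicitly uses $s_m\le M^{-\alpha\beta}z_m^{d-\alpha\beta}$, which is the wrong direction of the ceiling. You have correctly isolated the one delicate point of the lemma, but the resolution you sketch does not work; a complete proof must either restrict to parameters for which the ceiling is inactive or modify the construction (e.g.\ the number of bumps placed in each $C_i$) in the boundary regime.
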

\begin{proof}
    By direct calculation,
    \begin{align*}
        \sum_{j=1}^{s_{m}}
        &P_{X}(0<\phi(2Mz_m(x-\centerm))\leq \delta(Mz_{m})^{\beta},X\in\binm)\\
        &=s_m(Mz_m)^{-d}\int_{[0,1]^d}\mathbf{1}\{\phi(x)\le\delta(Mz_m)^\beta\}\mathsf{d}x\\
        &\le s_m(Mz_m)^{-d}\mathbf{1}\{\delta(Mz_m)^\beta>1\}+s_m(Mz_m)^{-d}d\delta^{1/\beta}(Mz_m)\mathbf{1}\{\delta(Mz_m)^\beta\le1\}\\
        &\le M^{-d}\delta^\alpha+dM^{-d}\delta^\alpha=M^{-d}(1+d)\delta^\alpha,
    \end{align*}
    where the first inequality is due to $\int_{[0,1]^d}\mathbf{1}\{\phi(x)\le\delta(Mz_m)^\beta\}\mathsf{d}x=1-(1-\delta^{1/\beta}(Mz_m))^d\le d\delta^{1/\beta}(Mz_m)$ when $\delta(Mz_m)^\beta\le1$; the second inequality uses the definition of the indicators and the assumption $\alpha\beta\le1$.
\end{proof}

\begin{lemma}\label{lemma:single-bin-tv}
   Fix any $n\in[T]$ and any policy $\pi$. For any $m\in[M],i\in I_m, j\in [s_m]$, 
   \[\|\pminus^{n}-\pplus^{n}\|_\mathrm{TV}\le \sqrt{n(Mz_m)^{-(2\beta+d)}}.\]
\end{lemma}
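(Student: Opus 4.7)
}

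The statement is a standard total-variation bound obtained via Pinsker's inequality applied to a carefully bounded Kullback--Leibler divergence. The plan is to first reduce the TV bound to a KL bound, then mimic the KL computation already carried out in Lemma~\ref{lemma:kl-neg-pos}, being careful that now the perturbation lives on a cube of side $(Mz_m)^{-1}$ rather than $z^{-1}$.

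First, I would invoke Pinsker's inequality:
\[
\|\pminus^{n}-\pplus^{n}\|_{\TV}\;\le\;\sqrt{\tfrac{1}{2}\,\KL(\pminus^{n},\pplus^{n})}.
\]
Next, I would observe that the two joint laws $\pminus^{n}$ and $\pplus^{n}$ over $\{X_{l}\}_{l=1}^{n}\cup\{Y_{l}^{A_{l}}\}_{l=1}^{\Gamma(n)-1}$ are generated by Bernoulli rewards whose means differ only through the single coordinate $\omega_{i,j}^{m}$; everything else in the construction of $f_\omega$ is identical under the two laws. Consequently, after expanding by the chain rule for KL and bounding the per-step Bernoulli KL by a multiple of the squared gap (using that the means lie in a bounded subinterval of $(0,1)$), one arrives at the same inequality as in Lemma~\ref{lemma:kl-neg-pos}, namely
\[
\KL(\pminus^{n},\pplus^{n})\;\le\;8\,\mathbb{E}_{\pi,\omega_{i,j}^m=-1}\!\left[\sum_{t=1}^{n}\bigl(f_{\omega_{i,j}^m=-1}(X_{t})-f_{\omega_{i,j}^m=1}(X_{t})\bigr)^{2}\mathbf{1}\{\pi_{t}(X_{t})=1\}\right].
\]

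Then I would plug in the explicit form of the perturbation: the difference of reward functions equals $2\xi_{i,j}^{m}$, which is supported on the sub-cube $\binm$ and bounded in magnitude by $2D_{\phi}(Mz_{m})^{-\beta}$. Using $\mathbb{P}_X(X_t\in\binm)=(Mz_m)^{-d}$ and bounding the indicator $\mathbf{1}\{\pi_t(X_t)=1\}$ by $1$, this yields
\[
\KL(\pminus^{n},\pplus^{n})\;\le\;32\,D_{\phi}^{2}\,n\,(Mz_{m})^{-(2\beta+d)}.
\]
Combining with Pinsker and using $D_{\phi}\le 1/4$, so that $16D_{\phi}^{2}\le 1$, gives exactly the claimed bound
\[
\|\pminus^{n}-\pplus^{n}\|_{\TV}\;\le\;4D_{\phi}\sqrt{n(Mz_{m})^{-(2\beta+d)}}\;\le\;\sqrt{n(Mz_{m})^{-(2\beta+d)}}.
\]

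There is no serious obstacle here: the argument is a direct copy of Lemma~\ref{lemma:kl-neg-pos} followed by Pinsker. The only points that require a little care are (i) verifying that the two laws agree on the $X$-marginals and on all rewards outside $\binm$ so that the KL decomposition really isolates the single cube, and (ii) tracking constants (the factor $2$ in $f^{+}-f^{-}=2\xi_{i,j}^{m}$ and the constraint $D_\phi\le 1/4$) so that the final constant collapses to $1$ rather than a larger absolute constant.
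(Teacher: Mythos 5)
Your proposal is correct and follows essentially the same route as the paper: the paper likewise bounds $\KL(\pminus^{n},\pplus^{n})$ by $8\,\mathbb{E}[\sum_{t}(f_{\omega_{i,j}^m=-1}(X_t)-f_{\omega_{i,j}^m=1}(X_t))^2\mathbf{1}\{\pi_t(X_t)=1\}]\le 32D_{\phi}^{2}(Mz_m)^{-(2\beta+d)}n\le 2n(Mz_m)^{-(2\beta+d)}$ and then applies Pinsker's inequality. Your constant bookkeeping ($4D_\phi\le 1$ after Pinsker, versus the paper's $32D_\phi^2\le 2$ before Pinsker) is an immaterial reordering of the same computation.
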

\begin{proof}It suffices to bound their KL-divergence. We can compute
\begin{align*}
\mathrm{KL}(\pminus^{n},\pplus^{n}) & \overset{\mathrm{(i)}}{\le}8\mathbb{E}_{\pi,\omega_{i,j}^m=-1}[\sum_{t=1}^{n}(f_{\omega_{i,j}^m=-1}(X_{t})-f_{\omega_{i,j}^m=1}(X_{t}))^{2}\mathbf{1}\{\pi_{t}(X_{t})=1\}]\\
 & \overset{\mathrm{(ii)}}{\le}32D_{\phi}^{2}(Mz_m)^{-2\beta}\mathbb{E}_{\pi,\omega_{i,j}^m=-1}[\sum_{t=1}^{n}\mathbf{1}\{\pi_{t}(X_{t})=1,X_{t}\in \binm\}]\\
 & \overset{\mathrm{(iii)}}{=}32D_{\phi}^{2}(Mz_m)^{-(2\beta+d)}\sum_{t=1}^{n}\mathbb{P}_{\pi,\omega_{i,j}^m=-1}^{t}(\pi_{t}(X_{t})=1\mid X_{t}\in \binm)\\
 & \overset{\mathrm{(iv)}}{\le}32D_{\phi}^{2}(Mz_m)^{-(2\beta+d)}n\le2n(Mz_m)^{-(2\beta+d)}.
\end{align*}
Here, step (i) uses the standard decomposition of KL divergence and
Bernoulli reward structure; step (ii) is due to the definition of
$f_{\omega}$; step (iii) uses $\mathbb{P}(X_{t}\in \binm)=1/(Mz_m)^{d}$,
and step (iv) arises from $\mathbb{P}_{\pi,\omega_{i,j}^m=-1}^{t}(\pi_{t}(X_{t})=1\mid X_{t}\in \binm)\le1$
for any $1\le t\le n$. 

By Pinsker's inequality, 
\[\|\pminus^{n}-\pplus^{n}\|_\mathrm{TV}\le\sqrt{\frac{1}{2}\mathrm{KL}(\pminus^{n},\pplus^{n})}\le\sqrt{n(Mz_m)^{-(2\beta+d)}}.\]
\end{proof}

\section{Proof of Theorem~\ref{thm:upper-bound}}\label{sec:upper-proof}

Our proof of Theorem~\ref{thm:upper-bound} is inspired by the framework developed
in \cite{perchet2013multi}. Our setting presents additional technical
difficulty due to the batch constraint. 

We begin with introducing some useful notations. Recall the tree growing
process described in section~\ref{sec:algo}, where we have defined
a tree $\mathcal{T}$ of depth $M$. The root (depth 0) of the tree
is the whole space $\mathcal{X}$. In depth $1$, $\mathcal{X}$ has
$g_{0}^{d}$ children, each of which is a bin of width $1/g_{0}$.
For each bin in depth 1, it has $g_{1}^{d}$ children, each of which
is a bin of width $1/(g_{0}g_{1})$. These children form the depth
2 nodes of the tree $\mathcal{T}$. We form the tree recursively until
depth $M$. 

For a bin $C\in\mathcal{T}$, we define its parent by $\mathsf{p}(C)=\{C^{\prime}\in\mathcal{T}:C\in\mathsf{child}(C^{\prime})\}$.
Moreover, we let $\mathsf{p}^{1}(C)=\mathsf{p}(C)$ and define $\mathsf{p}^{k}(C)=\mathsf{p}(\mathsf{p}^{k-1}(C))$
for $k\geq2$ recursively. In all, we denote by $\mathcal{P}(C)=\{C^{\prime}\in\mathcal{T}:C^{\prime}=\mathsf{p}^{k}(C)\textrm{ for some }k\ge1\}$
all the ancestors of the bin $C$. 

We also define $\mathcal{L}_{t}$ to be the set of active bins at
time $t$, with the dummy case $\mathcal{L}_{0}=\{\mathcal{X}\}$.
Clearly, for $1\leq t\leq t_{1}$, one has $\mathcal{L}_{1}=\mathcal{B}_{1}$,
where $\mathcal{B}_{1}$ are all the bins in the first layer. 

\subsection{Two clean events}

The regret analysis relies on two clean events. First, fix a batch
$i\ge1$, and recall $\mathcal{L}_{t_{i-1}+1}$ is the set of active
bins at time $t_{i-1}+1$. We denote the random number of pulls for
a bin $C\in\mathcal{L}_{t_{i-1}+1}$ within batch $i$ to be 
\[
m_{C,i}\coloneqq\sum_{t=t_{i-1}+1}^{t_{i}}\mathbf{1}\{X_{t}\in C\}.
\]
Clearly, it has expectation 
\[
m_{C,i}^{\star}=\mathbb{E}[m_{C,i}]=(t_{i}-t_{i-1})\mathbb{P}_{X}(X\in C).
\]
The first clean event claims that $m_{C,i}$ concentrates well around
its expectation $m_{C,i}^{\star}$ uniformly over all $C\in\mathcal{T}$.
We denote this event by $E$. 

\begin{lemma} \label{lemma:clean-event-1} 

Suppose that $M\le D_{1}\log(T)$ for some constant $D_{1}>0$. With
probability at least $1-1/T$, for all $1\leq i\leq M$, and $C\in\mathcal{L}_{t_{i-1}+1}$,
we have 
\[
\frac{1}{2}m_{C,i}^{\star}\le m_{C,i}\le\frac{3}{2}m_{C,i}^{\star}.
\]
\end{lemma}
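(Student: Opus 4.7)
\textbf{Proof plan for Lemma~\ref{lemma:clean-event-1}.} The approach is a standard concentration-plus-union-bound argument, with care taken to ensure that $m_{C,i}^{\star}$ is uniformly large enough (at least of order $\log T$) over all bins that could ever appear in $\mathcal{L}_{t_{i-1}+1}$.

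First, I would fix any bin $C$ in the tree $\mathcal{T}$ (not necessarily active) and any batch index $i$. Because the contexts $X_{t}$ are i.i.d.\ from $P_{X}$ and independent of everything the algorithm does (the $X_{t}$'s are exogenous and the events $\{X_{t}\in C\}$ do not depend on past actions or rewards), the variable $m_{C,i}=\sum_{t=t_{i-1}+1}^{t_{i}}\mathbf{1}\{X_{t}\in C\}$ is distributed as $\mathrm{Binomial}\bigl(t_{i}-t_{i-1},\,\mathbb{P}_{X}(C)\bigr)$ with mean $m_{C,i}^{\star}$. The multiplicative Chernoff bound then yields
\[
\mathbb{P}\bigl(|m_{C,i}-m_{C,i}^{\star}|>\tfrac{1}{2}m_{C,i}^{\star}\bigr)\le 2\exp\bigl(-m_{C,i}^{\star}/12\bigr).
\]

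Second, I would establish a uniform lower bound on $m_{C,i}^{\star}$ for every $C$ that can possibly belong to $\mathcal{L}_{t_{i-1}+1}$. Since bins are only split (never coarsened) across batches, any such $C$ lies at some depth $k\le i-1$ of $\mathcal{T}$ and has width $w_{k}\ge w_{i}$. Using the density lower bound $\underline{c}$ on $P_{X}$ together with the batch-length formula~(\ref{eq:batch-bin-size}),
\[
m_{C,i}^{\star}\ge \underline{c}(t_{i}-t_{i-1})w_{k}^{d}\gtrsim \underline{c}\,l_{i}\,w_{k}^{d}w_{i}^{-(2\beta+d)}\log(Tw_{i}^{d})\ge \underline{c}\,l_{i}\log T,
\]
where the last inequality uses $w_{k}\ge w_{i}$, so that $w_{k}^{d}w_{i}^{-(2\beta+d)}\ge w_{i}^{-2\beta}\ge 1$. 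Thus $m_{C,i}^{\star}\gtrsim l_{i}\log T$ uniformly over all potentially active bins and all batches.

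Third, I would union bound over every bin in $\mathcal{T}$ and every batch $i\in[M]$. The tree has at most $\sum_{k=0}^{M-1}(\prod_{l=0}^{k-1}g_{l})^{d}\le M\cdot w_{M}^{-d}$ nodes, and our choice of split factors in~(\ref{eq:split-factors}) together with $b=\Theta(T^{(1-\gamma)/(1-\gamma^{M})})$ gives $w_{M}^{-d}\le T^{d/(2\beta+d)}$, so the total number of $(C,i)$ pairs is polynomial in $T$. Since $M\le D_{1}\log T$ by hypothesis, $\log(M\cdot w_{M}^{-d})=O(\log T)$, so choosing the constants $l_{i}$ in~(\ref{eq:batch-bin-size}) large enough (depending only on $D_{1},\underline{c},\beta,d$) makes each deviation probability at most $T^{-C'}$ for any prescribed $C'$, and the union bound yields total failure probability at most $1/T$.

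The main technical point is the uniform lower bound on $m_{C,i}^{\star}$ in the second step: it relies crucially on the delicate alignment between batch length $t_{i}-t_{i-1}$ and bin width $w_{i}$ enforced by~(\ref{eq:batch-bin-size}), which is precisely the design principle identified in Section~\ref{subsec:implication-on-upper-bound}. Once this alignment is in hand, the rest of the argument is a routine Chernoff-plus-union-bound calculation, and no difficulty arises from the adaptive (data-dependent) nature of $\mathcal{L}_{t_{i-1}+1}$ because we union bound over the fixed, deterministic tree $\mathcal{T}$.
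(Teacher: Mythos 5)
Your proposal follows essentially the same route as the paper's proof: treat $m_{C,i}$ as a Binomial count (valid because the $X_t$ are exogenous), apply a Chernoff bound for each fixed node of the deterministic tree $\mathcal{T}$, establish a uniform lower bound on $m_{C,i}^{\star}$ from the calibration~(\ref{eq:batch-bin-size}), and union bound over the at most $M\cdot(\prod_{l=0}^{M-1}g_l)^d\le M\,T^{d/(2\beta+d)}$ nodes using $M\le D_1\log T$. The structure and all the key ingredients match.

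One soft spot worth fixing: in your second step you discard the factor $w_i^{-2\beta}$ via $w_k^d w_i^{-(2\beta+d)}\ge w_i^{-2\beta}\ge 1$, leaving only $m_{C,i}^{\star}\gtrsim l_i\log T$, and you then compensate by ``choosing $l_i$ large enough'' so that $2\exp(-m_{C,i}^{\star}/12)\le T^{-C'}$ beats the union bound. But $l_i$ is not a free parameter --- it is pinned down in the proof of Lemma~\ref{lemma:clean-event-2} by the requirement $U(2m_{C,i}^{\star},T,C)=2c_0|C|^{\beta}$, and could well be a small constant. The paper instead keeps the polynomial factor, $m_{C,i}^{\star}\gtrsim |C|^{-2\beta}\ge g_0^{2\beta}\asymp T^{\frac{1-\gamma}{1-\gamma^M}\cdot\frac{2\beta}{2\beta+d}}$, so that $m_{C,i}^{\star}\gg\log T$ automatically and the per-node failure probability can be taken to be $1/T^2$ without tuning anything. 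Your argument is repaired by simply not throwing away $w_i^{-2\beta}$; with that change the two proofs coincide.
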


\noindent See Section~\ref{subsec:proof-lemma-clean1} for the proof. 

\medskip

Since $M\le D_{1}\log(T)$ by assumption, we can apply Lemma~\ref{lemma:clean-event-1}
to obtain 
\[
\mathbb{E}[R_{T}(\hat{\pi})\mathbf{1}(E^{c})]\le T\mathbb{P}(E^{c})=1.
\]
Therefore, in the remaining proof, we condition on $E$ and focus
on bounding $\mathbb{E}[R_{T}(\hat{\pi})\mathbf{1}(E)]$. 

The second clean event is on the elimination process. Since we use
successive elimination in each bin, it is natural to expect that the
optimal arm in each bin is not eliminated during the process. To mathematically
specify this event, we need a few notations. 

For each bin $C\in\mathcal{L}_{i}$, let $\mathcal{I}_{C}^{\prime}$
be the set of remaining arms at the end of batch $i$, i.e., after
Algorithm~\ref{algo-subroutine} is invoked. Define
\begin{align*}
\bar{\mathcal{I}}_{C} & =\left\{ k\in\{1,-1\}:\sup_{x\in C}f^{\star}(x)-f^{(k)}(x)\le c_{1}|C|^{\beta}\right\} ,
\end{align*}
\[
\underline{\mathcal{I}}_{C}=\left\{ k\in\{1,-1\}:\sup_{x\in C}f^{\star}(x)-f^{(k)}(x)\le c_{0}|C|^{\beta}\right\} ,
\]
where $c_{0}=2Ld^{\beta/2}+1$ and $c_{1}=8c_{0}$. Clearly, we have
\[
\underline{\mathcal{I}}_{C}\subseteq\bar{\mathcal{I}}_{C}.
\]
Define a good event $\mathcal{A}_{C}=\{\underline{\mathcal{I}}_{C}\subseteq\mathcal{I}_{C}^{\prime}\subseteq\bar{\mathcal{I}}_{C}\}$,
which is the event that the remaining arms in $C$ have gaps of correct
order. In addition, define $\mathcal{G}_{C}=\cap_{C^{\prime}\in\mathcal{P}(C)}\mathcal{A}_{C^{\prime}}$.
Recall $\mathcal{B}_{i}$ is the set of bins $C$ with $|C|=(\prod_{l=0}^{i-1}g_{l})^{-1}=w_{i}$
for $i\ge1$.

\begin{lemma}\label{lemma:clean-event-2}For any $1\le i\le M-1$
and $C\in\mathcal{B}_{i}$, we have 
\[
\mathbb{P}(E\cap\mathcal{G}_{C}\cap\mathcal{A}_{C}^{c})\leq\frac{4m_{C,i}^{\star}}{T|C|^{d}}.
\]
\end{lemma}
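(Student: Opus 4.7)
The key structural observation is that, whenever $\mathcal{G}_C$ holds, $|\mathcal{I}_C|=2$ at the start of batch $i$: for $i\ge 2$, the node $C$ is present in $\mathcal{L}_{t_{i-1}+1}$ only because its parent $p(C)\in\mathcal{B}_{i-1}$ was split at the end of batch $i-1$, which by Algorithm~\ref{algo-subroutine} requires $|\mathcal{I}'_{p(C)}|>1$; for $i=1$ the initialization sets $\mathcal{I}_C=\{1,-1\}$. Round-robin therefore pulls each of the two arms $\lfloor m_{C,i}/2\rfloor$ or $\lceil m_{C,i}/2\rceil$ times in $C$ during batch $i$, and the failure $\mathcal{A}_C^c$ reduces to two cases: (a) some $k\in\underline{\mathcal{I}}_C$ satisfies $\bar Y_{C,i}^{\max}-\bar Y_{C,i}^{(k)}>U(m_{C,i},T,C)$ and is wrongly eliminated, or (b) some $k\in\{1,-1\}\setminus\bar{\mathcal{I}}_C$ satisfies $\bar Y_{C,i}^{\max}-\bar Y_{C,i}^{(k)}\le U(m_{C,i},T,C)$ and wrongly survives.

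The heart of the argument is to introduce the concentration event
\[
\mathcal{H}_C\coloneqq\Bigl\{\forall k\in\{1,-1\}:\,|\bar Y_{C,i}^{(k)}-\bar f_C^{(k)}|\le \tfrac14 U(m_{C,i},T,C)\Bigr\}
\]
and bound $\mathbb{P}(E\cap\mathcal{H}_C^c)$. Conditional on the contexts and actions in batch $i$, Hoeffding's inequality, combined with the smoothness-based control $|\bar f_C^{(k)}-(n_{C,i}^{(k)})^{-1}\sum_t f^{(k)}(X_t)\mathbf{1}\{X_t\in C,A_t=k\}|\le Ld^{\beta/2}|C|^\beta$ absorbed into the $1/4$ slack, gives a per-arm failure probability of $2\exp(-\Omega(\log(2T|C|^d)))$ for each fixed value of $m_{C,i}$. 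Since on $E$ we have $m_{C,i}\in[\tfrac12 m_{C,i}^\star,\tfrac32 m_{C,i}^\star]$ by Lemma~\ref{lemma:clean-event-1}, a union bound over the two arms and the at most $m_{C,i}^\star$ values of $m_{C,i}$ delivers $\mathbb{P}(E\cap\mathcal{H}_C^c)\le 4m_{C,i}^\star/(T|C|^d)$.

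It remains to show $E\cap\mathcal{G}_C\cap\mathcal{H}_C\subseteq\mathcal{A}_C$. The batch-size prescription~(\ref{eq:batch-bin-size}) forces $U(m_{C,i},T,C)\asymp|C|^\beta$ on $E$, with the hidden constant tunable through $l_i$. For case (a), writing $k'$ for the other arm,
\[
\bar Y_{C,i}^{(k')}-\bar Y_{C,i}^{(k)}\le(\bar f_C^{(k')}-\bar f_C^{(k)})+\tfrac12 U\le (\bar f_C^\star-\bar f_C^{(k)})+\tfrac12 U\le c_0|C|^\beta+\tfrac12 U\le U,
\]
where the penultimate step uses $\bar f_C^\star-\bar f_C^{(k)}=\mathbb{P}_X(C)^{-1}\int_C(f^\star-f^{(k)})\,\mathrm{d}\mathbb{P}_X\le c_0|C|^\beta$ for $k\in\underline{\mathcal{I}}_C$, and the last step requires $U\ge 2c_0|C|^\beta$. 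For case (b), picking $x_0\in C$ with $f^\star(x_0)-f^{(k)}(x_0)>c_1|C|^\beta$ and letting $k'=\arg\max_j f^{(j)}(x_0)\in\mathcal{I}_C$, smoothness yields $\bar f_C^{(k')}-\bar f_C^{(k)}>(c_1-2Ld^{\beta/2})|C|^\beta$, hence $\bar Y_{C,i}^{(k')}-\bar Y_{C,i}^{(k)}>(c_1-2Ld^{\beta/2})|C|^\beta-\tfrac12 U>U$, once $U<\tfrac23(c_1-2Ld^{\beta/2})|C|^\beta$.

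The main obstacle is the delicate coordination of constants: $l_i$ in~(\ref{eq:batch-bin-size}) must be chosen so that $U(m_{C,i},T,C)/|C|^\beta$ lies in the window $[2c_0,\tfrac23(c_1-2Ld^{\beta/2}))$. The specific choices $c_0=2Ld^{\beta/2}+1$ and $c_1=8c_0$ ensure this window has positive width, so that the smoothness error $\Theta(|C|^\beta)$ never swamps the empirical margin separating arms in $\underline{\mathcal{I}}_C$ from arms in $\{1,-1\}\setminus\bar{\mathcal{I}}_C$.
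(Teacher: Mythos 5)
Your proposal is correct and follows essentially the same route as the paper: it splits $\mathcal{A}_C^c$ into wrongful elimination of an arm in $\underline{\mathcal{I}}_C$ versus wrongful survival of an arm outside $\bar{\mathcal{I}}_C$, reduces both to the failure of the $\tfrac14 U(m_{C,i},T,C)$-concentration of $\bar Y_{C,i}^{(k)}$ around $\bar f_C^{(k)}$, and controls that failure by Hoeffding plus a union bound over the two arms and the at most $m_{C,i}^{\star}$ possible pull counts, using the same calibration $U(2m_{C,i}^{\star},T,C)=2c_0|C|^{\beta}$ and $c_1=8c_0$. The only cosmetic difference is that you phrase the deterministic step as an inclusion $E\cap\mathcal{G}_C\cap\mathcal{H}_C\subseteq\mathcal{A}_C$ rather than bounding the two bad events separately, which is the contrapositive of the paper's argument.
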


\noindent In words, Lemma~\ref{lemma:clean-event-2} guarantees
that $\mathcal{A}_{C}$ happens with high probability if $E$ holds
and $\mathcal{A}_{C'}$ holds for all the ancestors $C\text{\textquoteright}$
of $C$. See Section~\ref{subsec:proof-lemma-clean2} for the proof. 

\subsection{Regret decomposition}

In this section, we decompose the regret into three terms. First,
for a bin $C$, we define 
\[
r_{T}^{\textrm{live}}(C)\coloneqq\sum_{t=1}^{T}\left(f^{\star}(X_{t})-f^{(\pi_{t}(X_{t}))}(X_{t})\right)\mathbf{1}(X_{t}\in C)\mathbf{1}(C\in\mathcal{L}_{t}).
\]
In addition, define $\mathcal{J}_{t}\coloneqq\cup_{s\le t}\mathcal{L}_{s}$
to be the set of bins that have been live up until time $t$. Correspondingly
we define 
\[
r_{T}^{\textrm{born}}(C)\coloneqq\sum_{t=1}^{T}\left(f^{\star}(X_{t})-f^{(\pi_{t}(X_{t}))}(X_{t})\right)\mathbf{1}(X_{t}\in C)\mathbf{1}(C\in\mathcal{J}_{t}).
\]
It is clear from the definition that for any $C\in\mathcal{T}$, one
has 
\begin{align*}
r_{T}^{\textrm{born}}(C) & =r_{T}^{\textrm{live}}(C)+\sum_{C^{\prime}\in\mathsf{child}(C)}r_{T}^{\textrm{born}}(C^{\prime})\\
 & =r_{T}^{\textrm{born}}(C)\mathbf{1}(\mathcal{A}_{C}^{c})+r_{T}^{\textrm{live}}(C)\mathbf{1}(\mathcal{A}_{C})+\sum_{C^{\prime}\in\mathsf{child}(C)}r_{T}^{\textrm{born}}(C^{\prime})\mathbf{1}(\mathcal{A}_{C}).
\end{align*}
Applying this relation recursively leads to the following regret decomposition:
\begin{align*}
R_{T}(\pi) & =\mathbb{E}[r_{T}^{\textrm{born}}(\mathcal{X})]\\
 & =\underbrace{\mathbb{E}[r_{T}^{\textrm{live}}(\mathcal{X})]}_{=0}+\sum_{C^{\prime}\in\mathsf{child}(\mathcal{X})}\mathbb{E}[r_{T}^{\textrm{born}}(C^{\prime})]\\
 & =\sum_{1\le i<M}\left(\underbrace{\sum_{C\in\mathcal{B}_{i}}\mathbb{E}[r_{T}^{\textrm{born}}(C)\mathbf{1}(\mathcal{G}_{C}\cap\mathcal{A}_{C}^{c})]}_{\eqqcolon U_{i}}+\underbrace{\sum_{C\in\mathcal{B}_{i}}
 \mathbb{E}[r_{T}^{\textrm{live}}(C)\mathbf{1}(\mathcal{G}_{C}\cap\mathcal{A}_{C})]}_{\eqqcolon V_{i}}\right)\\
 & \quad+\sum_{C\in\mathcal{B}_{M}}
 \mathbb{E}[r_{T}^{\textrm{live}}(C)\mathbf{1}(\mathcal{G}_{C})],
\end{align*}
where the second equality arises from the fact that $r_{T}^{\textrm{live}}(\mathcal{X})=0$.
Indeed, $\mathcal{X}\notin\mathcal{L}_{t}$ for any $1\leq t\leq T$. 

\subsection{Controlling three terms}

In what follows, we control $V_{i},U_{i}$ and the last batch separately. 

\subsubsection{Controlling $V_{i}$}

Fix some $1\leq i\leq M-1$, and some bin $C\in\mathcal{B}_{i}$.
On the event $\mathcal{G}_{C}$ we have $\mathcal{I}_{\mathsf{p}(C)}^{'}\subseteq\bar{\mathcal{I}}_{\mathsf{p}(C)}$,
that is, for any $k\in\mathcal{I}_{\mathsf{p}(C)}^{'}$, 
\[
\sup_{x\in\mathsf{p}(C)}f^{\star}(x)-f^{(k)}(x)\le c_{1}|\mathsf{p}(C)|^{\beta}.
\]
This implies that for any $x\in C$, and $k\in\mathcal{I}_{\mathsf{p}(C)}^{'}$,
\begin{equation}
\left(f^{\star}(x)-f^{(k)}(x)\right)\bm{1}\{\mathcal{G}_{C}\}\leq c_{1}|\mathsf{p}(C)|^{\beta}\mathbf{1}(0<\left|f^{(1)}(x)-f^{(-1)}(x)\right|\le c_{1}|\mathsf{p}(C)|^{\beta}).\label{eq:remaining-arm-gap}
\end{equation}
As a result, we obtain
\begin{align*}
& \mathbb{E}[r_{T}^{\textrm{live}}(C)\mathbf{1}(\mathcal{G}_{C}\cap\mathcal{A}_{C})]  =\mathbb{E}\left[\sum_{t=1}^{T}\left(f^{\star}(X_{t})-f^{(\pi_{t}(X_{t}))}(X_{t})\right)\mathbf{1}(X_{t}\in C)\mathbf{1}(C\in\mathcal{L}_{t})\mathbf{1}(\mathcal{G}_{C}\cap\mathcal{A}_{C})\right]\\
 &\quad  \overset{\mathrm{(i)}}{\le}\mathbb{E}\left[\sum_{t=1}^{T}c_{1}|\mathsf{p}(C)|^{\beta}\mathbf{1}(0<\left|f^{(1)}(X_{t})-f^{(-1)}(X_{t})\right|\le c_{1}|\mathsf{p}(C)|^{\beta})\mathbf{1}(X_{t}\in C,C\in\mathcal{L}_{t})\mathbf{1}(\mathcal{G}_{C}\cap\mathcal{A}_{C})\right]\\
 &\quad \overset{\mathrm{(ii)}}{\le}c_{1}|\mathsf{p}(C)|^{\beta}\mathbb{E}\left[\sum_{t=t_{i-1}+1}^{t_{i}}\mathbf{1}(0<\left|f^{(1)}(X_{t})-f^{(-1)}(X_{t})\right|\le c_{1}|\mathsf{p}(C)|^{\beta},X_{t}\in C)\mathbf{1}(\mathcal{G}_{C}\cap\mathcal{A}_{C})\right]\\
 &\quad \overset{\mathrm{(iii)}}{\le}c_{1}|\mathsf{p}(C)|^{\beta}\sum_{t=t_{i-1}+1}^{t_{i}}\mathbb{P}(0<\left|f^{(1)}(X_{t})-f^{(-1)}(X_{t})\right|\le c_{1}|\mathsf{p}(C)|^{\beta},X_{t}\in C)\\
 &\quad =c_{1}|\mathsf{p}(C)|^{\beta}(t_{i}-t_{i-1})\mathbb{P}(0<\left|f^{(1)}(X)-f^{(-1)}(X)\right|\le c_{1}|\mathsf{p}(C)|^{\beta},X\in C).
\end{align*}
Here, step (i) uses relation (\ref{eq:remaining-arm-gap}), and the
fact that $\pi_{t}(X_{t})\in\mathcal{I}_{\mathsf{p}(C)}^{'}$ when
$X_{t}\in C$. For step (ii), if $C$ is split, then it is no longer
live, so the live regret incurred on the remaining batches is zero.
On the other hand, if $C$ is not split, then $|\mathcal{I}_{C}^{\prime}|=1$.
Without loss of generality, assume that arm $-1$ is eliminated. Conditioned
on $\mathcal{A}_{C}$, this means $-1\notin\underline{\mathcal{I}}_{C}$
and there exists $x_{0}\in C$ such that $f^{(1)}(x_{0})-f^{(-1)}(x_{0})>c_{0}|C|^{\beta}$.
By the smoothness condition, having a gap at least $c_{0}|C|^{\beta}$
on a single point in $C$ implies $f^{(1)}(x)-f^{(-1)}(x)>|C|^{\beta}$
for all $x\in C$. Therefore, arm 1 which is the remaining one is
the optimal arm for all $x\in C$ and would not incur any regret further.
The third inequality holds since $\mathbf{1}(\mathcal{G}_{C}\cap\mathcal{A}_{C})\le1$.

Taking the sum over all bins in $\mathcal{B}_{i}$ and using the fact
that $|\mathsf{p}(C)|=w_{i-1}$, we obtain 

\begin{align}
\sum_{C\in\mathcal{B}_{i}}\mathbb{E}[r_{T}^{\textrm{live}}(C)\mathbf{1}(\mathcal{G}_{C}\cap\mathcal{A}_{C})] & \le\sum_{C\in\mathcal{B}_{i}}c_{1}w_{i-1}^{\beta}(t_{i}-t_{i-1})\mathbb{P}(0<\left|f^{(1)}(X)-f^{(-1)}(X)\right|\le c_{1}|\mathsf{p}(C)|^{\beta},X\in C)\nonumber \\
 & =c_{1}w_{i-1}^{\beta}(t_{i}-t_{i-1})\sum_{C\in\mathcal{B}_{i}}\mathbb{P}(0<\left|f^{(1)}(X)-f^{(-1)}(X)\right|\le c_{1}w_{i-1}^{\beta},X\in C).\label{eq:vi-calculation}
\end{align}
Note that
\begin{align}
\sum_{C\in\mathcal{B}_{i}}\mathbb{P}(0<\left|f^{(1)}(X)-f^{(-1)}(X)\right|\le c_{1}w_{i-1}^{\beta},X\in C) & =\mathbb{P}(0<\left|f^{(1)}(X)-f^{(-1)}(X)\right|\le c_{1}w_{i-1}^{\beta})\nonumber \\
 & \le D_{0}\cdot\left[c_{1}w_{i-1}^{\beta}\right]^{\alpha},\label{eq:margin-condition}
\end{align}
where the last inequality follows from the margin condition. Combining
relations (\ref{eq:margin-condition}) and (\ref{eq:vi-calculation}),
we reach
\begin{align*}
\sum_{C\in\mathcal{B}_{i}}\mathbb{E}[r_{T}^{\textrm{live}}(C)\mathbf{1}(\mathcal{G}_{C}\cap\mathcal{A}_{C})] & \le(t_{i}-t_{i-1})\cdot[c_{1}w_{i-1}^{\beta}]^{1+\alpha}\cdot D_{0}.
\end{align*}

\subsubsection{Controlling $U_{i}$}

Fix some $1\leq i\leq M-1$, and some bin $C\in\mathcal{B}_{i}$.
Again, using the definition of $\mathcal{G}_{C}$, we obtain 

\begin{align*}
\mathbb{E}[r_{T}^{\textrm{born}}(C)\mathbf{1}(\mathcal{G}_{C}\cap\mathcal{A}_{C}^{c})] & =\mathbb{E}\left[\sum_{t=1}^{T}\left(f^{\star}(X_{t})-f^{(\pi_{t}(X_{t}))}(X_{t})\right)\mathbf{1}(X_{t}\in C)\mathbf{1}(C\in\mathcal{J}_{t})\mathbf{1}(\mathcal{G}_{C}\cap\mathcal{A}_{C}^{c})\right]\\
 & \le\mathbb{E}\left[\sum_{t=1}^{T}c_{1}|\mathsf{p}(C)|^{\beta}\mathbf{1}(0<\left|f^{(1)}(X_{t})-f^{(-1)}(X_{t})\right|\le c_{1}|\mathsf{p}(C)|^{\beta})\mathbf{1}(X_{t}\in C,C\in\mathcal{J}_{t})\mathbf{1}(\mathcal{G}_{C}\cap\mathcal{A}_{C}^{c})\right]\\
 & \le c_{1}|\mathsf{p}(C)|^{\beta}T\mathbb{P}(0<\left|f^{(1)}(X)-f^{(-1)}(X)\right|\le c_{1}|\mathsf{p}(C)|^{\beta},X\in C)\mathbb{P}(\mathcal{G}_{C}\cap\mathcal{A}_{C}^{c}).
\end{align*}
Apply Lemma~\ref{lemma:clean-event-2} to see that 
\begin{align*}
\mathbb{E}[r_{T}^{\textrm{born}}(C)\mathbf{1}(\mathcal{G}_{C}\cap\mathcal{A}_{C}^{c})] & \le c_{1}|\mathsf{p}(C)|^{\beta}T\mathbb{P}(0<\left|f^{(1)}(X)-f^{(-1)}(X)\right|\le c_{1}|\mathsf{p}(C)|^{\beta},X\in C)\frac{4m_{C,i}^{\star}}{T|C|^{d}}\\
 & =c_{1}w_{i-1}^{\beta}\mathbb{P}(0<\left|f^{(1)}(X)-f^{(-1)}(X)\right|\le c_{1}w_{i-1}^{\beta},X\in C)\frac{4(t_{i}-t_{i-1})\mathbb{P}_{X}(X\in C)}{|C|^{d}}\\
 & \le4\bar{c}c_{1}w_{i-1}^{\beta}\mathbb{P}(0<\left|f^{(1)}(X)-f^{(-1)}(X)\right|\le c_{1}w_{i-1}^{\beta},X\in C)(t_{i}-t_{i-1}),
\end{align*}
where we use the fact that $\mathbb{P}_{X}(X\in C)\le\bar{c}|C|^{d}$
in the second inequality. Summing over all bins in $\mathcal{B}_{i}$,
we obtain
\begin{align*}
\sum_{C\in\mathcal{B}_{i}}\mathbb{E}[r_{T}^{\textrm{born}}(C)\mathbf{1}(\mathcal{G}_{C}\cap\mathcal{A}_{C}^{c})] & \le4\bar{c}c_{1}w_{i-1}^{\beta}(t_{i}-t_{i-1})\sum_{C\in\mathcal{B}_{i}}\mathbb{P}(0<\left|f^{(1)}(X)-f^{(-1)}(X)\right|\le c_{1}w_{i-1}^{\beta},X\in C)\\
 & \le4\bar{c}c_{1}w_{i-1}^{\beta}(t_{i}-t_{i-1})D_{0}\cdot\left[c_{1}w_{i-1}^{\beta}\right]^{\alpha}\\
 & =4D_{0}\bar{c}(t_{i}-t_{i-1})[c_{1}w_{i-1}^{\beta}]^{1+\alpha},
\end{align*}
where the second inequality reuses the bound in (\ref{eq:margin-condition}).

\subsubsection{Last Batch}

For $C\in\mathcal{B}_{M}$, one can similarly obtain 
\begin{align*}
\mathbb{E}[r_{T}^{\textrm{live}}(C)\mathbf{1}(\mathcal{G}_{C})] & \le c_{1}|\mathsf{p}(C)|^{\beta}(T-t_{M-1})\mathbb{P}(0<\left|f^{(1)}(X)-f^{(-1)}(X)\right|\le c_{1}|\mathsf{p}(C)|^{\beta},X\in C).
\end{align*}
Consequently, summing over $C\in\mathcal{B}_{M}$ yields
\begin{align*}
\sum_{C\in\mathcal{B}_{M}}\mathbb{E}[r_{T}^{\textrm{live}}(C)\mathbf{1}(\mathcal{G}_{C})] & \le\sum_{C\in\mathcal{B}_{M}}c_{1}|\mathsf{p}(C)|^{\beta}(T-t_{M-1})\mathbb{P}(0<\left|f^{(1)}(X)-f^{(-1)}(X)\right|\le c_{1}|\mathsf{p}(C)|^{\beta},X\in C)\\
 & \le c_{1}w_{M-1}^{\beta}(T-t_{M-1})D_{0}\cdot\left[c_{1}w_{M-1}^{\beta}\right]^{\alpha}\\
 & =D_{0}(T-t_{M-1})[c_{1}w_{M-1}^{\beta}]^{1+\alpha}.
\end{align*}

\subsection{Putting things together}

In sum, the total regret is bounded by 

\begin{align*}
R_{T}(\pi) & \le c\left(t_{1}+\sum_{i=2}^{M-1}(t_{i}-t_{i-1})\cdot w_{i-1}^{\beta+\alpha\beta}+(T-t_{M-1})w_{M-1}^{\beta+\alpha\beta}\right),
\end{align*}
where $c$ is a constant that depends on $(\alpha,\beta,D,L)$. Recall
that $w_{i}=(\prod_{l=0}^{i-1}g_{l})^{-1}$, and the choices for the
batch size and the split factors~(\ref{eq:batch-bin-size})-(\ref{eq:split-factors}).
We then obtain 
\begin{align*}
t_{1} & \lesssim T^{\frac{1-\gamma}{1-\gamma^{M}}}\log T,\\
(t_{i}-t_{i-1})\cdot w_{i-1}^{\beta+\alpha\beta} & \lesssim T^{\frac{1-\gamma}{1-\gamma^{M}}}\log T,\qquad\text{for }2\leq i\leq M-1,\\
(T-t_{M-1})w_{M-1}^{\beta+\alpha\beta} & \leq Tw_{M-1}^{\beta+\alpha\beta}\lesssim T^{\frac{1-\gamma}{1-\gamma^{M}}}\log T.
\end{align*}
The proof is finished by combining the above three bounds. 

\subsection{Proofs for the clean events}

We are left with proving that the two clean events happen with high
probability. 

\subsubsection{Proof of Lemma~\ref{lemma:clean-event-1}\label{subsec:proof-lemma-clean1}}

Fix the batch index $i$, and a node $C$ in layer-$i$ of the tree
$\mathcal{T}$. By relation (\ref{eq:batch-bin-size}), we have 
\begin{align*}
m_{C,i}^{\star} & =(t_{i}-t_{i-1})\mathbb{P}_{X}(X\in C)\\
 & \asymp|C|^{-(2\beta+d)}\log(T|C|^{d})\mathbb{P}_{X}(X\in C)\\
 & \apprge|C|^{-2\beta}\overset{\mathrm{}}{\ge}g_{0}^{2\beta}\asymp(T^{\frac{1-\gamma}{1-\gamma^{M}}\cdot\frac{2\beta}{2\beta+d}}),
\end{align*}
where the last step uses the fact that $\mathbb{P}_{X}(X\in C)\ge\underline{c}|C|^{d}$.
Therefore, $m_{C,i}^{\star}\ge\frac{3}{4}\log(2T^{2})$ for all $i$
and $C$, as long as $T$ is sufficiently large. This allows us to invoke
Chernoff's bound to obtain that with probability at most $1/T^{2}$
\[
\left|\sum\nolimits_{t=t_{i-1}+1}^{t_{i}}\mathbf{1}\{X_{t}\in C\}-m_{C,i}^{\star}\right|\ge\sqrt{3\log(2T^{2})m_{C,i}^{\star}}.
\]
Denote $E^{c}=\{\exists1\le i\le M,C\in\mathcal{L}_{t_{i-1}+1}\text{ such that }\mid\sum_{t=t_{i-1}+1}^{t_{i}}\mathbf{1}\{X_{t}\in C\}-m_{C,i}^{\star}\mid\ge\sqrt{3\log(2T^{2})m_{C,i}^{\star}}\}$.
Applying union bound to reach 
\begin{align*}
\mathbb{P}(E^{c}) & \le\sum_{C\in\mathcal{T}}\frac{1}{T^{2}}\overset{\mathrm{(i)}}{\le}\frac{1}{T^{2}}\left(\sum_{i=1}^{M}(\prod_{l=0}^{i-1}g_{l})^{d}\right)\overset{\mathrm{(ii)}}{\le}\frac{1}{T^{2}}\cdot M\cdot(\prod_{l=0}^{M-1}g_{l})^{d},
\end{align*}
where step (i) sums over all possible nodes of $\mathcal{T}$ across
batches, and step (ii) is due to $(\prod_{l=0}^{i-1}g_{l})^{d}\le(\prod_{l=0}^{M-1}g_{l})^{d}$
for any $1\le i\le M$. Since $g_{M-1}=1$, we further obtain
\begin{align*}
\mathbb{P}(E^{c}) & \le\frac{1}{T^{2}}\cdot M\cdot(\prod_{l=0}^{M-2}g_{l})^{d}\overset{\mathrm{(iii)}}{\le}\frac{1}{T^{2}}\cdot M\cdot t_{M-1}^{\frac{d}{2\beta+d}}\overset{\mathrm{(iv)}}{\le}D_{1}\frac{1}{T^{2}}\cdot\log T\cdot T^{\frac{d}{2\beta+d}}\le\frac{1}{T},
\end{align*}
where step (iii) invokes relation (\ref{eq:batch-bin-size}), and
step (iv) uses the assumption $M\le D_{1}\log T$. This completes
the proof. 

\subsubsection{Proof of Lemma \ref{lemma:clean-event-2}\label{subsec:proof-lemma-clean2}}

To simplify notation, for any event $F$, we define $\mathbb{P}^{\mathcal{G}_{C}}(F)=\mathbb{P}(E\cap\mathcal{G}_{C}\cap F)$.

Let $\mathcal{D}_{C}^{1}$ be the event that an arm $k\in\underline{\mathcal{I}}_{C}$
is eliminated at the end of batch $i$, and $\mathcal{D}_{C}^{2}$
be the event that an arm $k\notin\bar{\mathcal{I}}_{C}$ is not eliminated
at the end of batch $i$. Consequently, we have
\[
\mathbb{P}^{\mathcal{G}_{C}}(\mathcal{A}_{C}^{c})=\mathbb{P}^{\mathcal{G}_{C}}(\mathcal{D}_{C}^{1})+\mathbb{P}^{\mathcal{G}_{C}}((\mathcal{D}_{C}^{1})^{c}\cap\mathcal{D}_{C}^{2}).
\]
Recall $U(\tau,T,C)=4\sqrt{\frac{\log(2T|C|^{d})}{\tau}}$ . By relation
(\ref{eq:batch-bin-size}), we can write 
\begin{align*}
m_{C,i}^{\star} & =(t_{i}-t_{i-1})\mathbb{P}_{X}(X\in C)\\
 & =l_{i}|C|^{-(2\beta+d)}\log(T|C|^{d})\mathbb{P}_{X}(X\in C),
\end{align*}
where $l_{i}>0$ is a constant chosen such that $U(2m_{C,i}^{\star},T,C)=2c_{0}|C|^{\beta}$.
Under $E$, we have $U(m_{C,i},T,C)\le4c_{0}|C|^{\beta}$ because
$m_{C,i}\ge\frac{1}{2}m_{C,i}^{\star}$.
\begin{enumerate}
\item Upper bounding $\mathbb{P}^{\mathcal{G}_{C}}(\mathcal{D}_{C}^{1})$:
when $\mathcal{D}_{C}^{1}$ occurs, an arm $k\in\underline{\mathcal{I}}_{C}$
is eliminated by some $k^{\prime}\in\mathcal{I}_{\mathsf{p}(C)}^{\prime}$
at the end of batch $i$. This means $\bar{Y}_{C,i}^{(k^{\prime})}-\bar{Y}_{C,i}^{(k)}>U(m_{C,i},T,C)$.
Meanwhile,
\begin{align*}
\bar{f}_{C}^{(k^{\prime})}-\bar{f}_{C}^{(k)} & \le\bar{f}_{C}^{\star}-\bar{f}_{C}^{(k)}\overset{\mathrm{(i)}}{\le}c_{0}|C|^{\beta}\le\frac{1}{2}U(2m_{C,i}^{\star},T,C),
\end{align*}
where step (i) uses the definition of $\underline{\mathcal{I}}_{C}$.
Consequently, ${|\bar{Y}_{C,i}^{(k^{\prime})}-\bar{f}_{C}^{(k^{\prime})}|\le U(m_{C,i},T,C)/4}$
and $|\bar{Y}_{C,i}^{(k)}-\bar{f}_{C}^{(k)}|\le U(m_{C,i},T,C)/4$
cannot hold simultaneously. Otherwise, this would contradict with
$\bar{Y}_{C,i}^{(k^{\prime})}-\bar{Y}_{C,i}^{(k)}>U(m_{C,i},T,C)$
because $m_{C,i}\le2m_{C,i}^{\star}$ under $E$. Therefore,
\begin{align*}
\mathbb{P}^{\mathcal{G}_{C}}(\mathcal{D}_{C}^{1}) & \le\mathbb{P}\left\{ \exists k\in\mathcal{I}_{\mathsf{p}(C)}^{\prime},m_{C,i}\le2m_{C,i}^{\star}:|\bar{Y}_{C,i}^{(k)}-\bar{f}_{C}^{(k)}|\ge\frac{1}{4}U(m_{C,i},T,C)\right\} .
\end{align*}
\item Upper bounding $\mathbb{P}^{\mathcal{G}_{C}}((\mathcal{D}_{C}^{1})^{c}\cap\mathcal{D}_{C}^{2})$:
when $(\mathcal{D}_{C}^{1})^{c}\cap\mathcal{D}_{C}^{2}$ happens,
no arm in $\underline{\mathcal{I}}_{C}$ is eliminated while some
$k\notin\bar{\mathcal{I}}_{C}$ remains in the active arm set. By
definition, there exists $x^{(k)}$ such that ${f^{\star}(x^{(k)})-f^{(k)}(x^{(k)})>8c_{0}|C|^{\beta}}.$
Let $\eta(k)$ be any arm that satisfies $f^{\star}(x^{(k)})=f^{(\eta(k))}(x^{(k)})$,
and one can easily verify $\eta(k)\in\underline{\mathcal{I}}_{C}$.
Since $k$ is not eliminated, we have $\bar{Y}_{C,i}^{(\eta(k))}-\bar{Y}_{C,i}^{(k)}\le U(m_{C,i},T,C)$.
On the other hand, 
\begin{align}
\bar{f}_{C}^{(\eta(k))} & \overset{(\mathrm{iii})}{\ge}f^{(\eta(k))}(x^{(k)})-c_{0}|C|^{\beta}\nonumber \\
 & \ge f^{(k)}(x^{(k)})+8c_{0}|C|^{\beta}-c_{0}|C|^{\beta}\nonumber \\
 & =f^{(k)}(x^{(k)})+7c_{0}|C|^{\beta}\nonumber \\
 & \overset{(\mathrm{iv})}{\ge}\bar{f}_{C}^{(k)}+6c_{0}|C|^{\beta}\ge\bar{f}_{C}^{(k)}+\frac{3}{2}U(m_{C,i},T,C),\label{eq:arm-population-diff}
\end{align}
where steps (iii) and (iv) use Lemma~\ref{lemma:smoothness-avg}.
Inequality (\ref{eq:arm-population-diff}) together with the fact
that $\bar{Y}_{C,i}^{(\eta(k))}-\bar{Y}_{C,i}^{(k)}\le U(m_{C,i},T,C)$
imply $|\bar{Y}_{C,i}^{(k_{0})}-\bar{f}_{C}^{(k_{0})}|\ge U(m_{C,i},T,C)/4$
for either $k_{0}=k$ or $k_{0}=\eta(k)$. Consequently,
\begin{align*}
\mathbb{P}^{\mathcal{G}_{C}}((\mathcal{D}_{C}^{1})^{c}\cap\mathcal{D}_{C}^{2}) & \le\mathbb{P}\left\{ \exists k\in\mathcal{I}_{\mathsf{p}(C)}^{\prime},m_{C,i}\le2m_{C,i}^{\star}:|\bar{Y}_{C,i}^{(k)}-\bar{f}_{C}^{(k)}|\ge\frac{1}{4}U(m_{C,i},T,C)\right\} .
\end{align*}
\end{enumerate}
Combining the two parts we obtain
\begin{align*}
\mathbb{P}^{\mathcal{G}_{C}}(\mathcal{A}_{C}^{c}) & =\mathbb{P}^{\mathcal{G}_{C}}(\mathcal{D}_{C}^{1})+\mathbb{P}^{\mathcal{G}_{C}}((\mathcal{D}_{C}^{1})^{c}\cap\mathcal{D}_{C}^{2})\\
 & \le2\cdot\mathbb{P}\left\{ \exists k\in\mathcal{I}_{\mathsf{p}(C)}^{\prime},m_{C,i}\le2m_{C,i}^{\star}:|\bar{Y}_{C,i}^{(k)}-\bar{f}_{C}^{(k)}|\ge\frac{1}{4}U(m_{C,i},T,C)\right\} \\
 & \le\frac{4m_{C,i}^{\star}}{T|C|^{d}},
\end{align*}
where the last inequality applies Lemma~\ref{lemma:reward-concentration}.

\subsection{Auxiliary lemmas}

\begin{lemma}\label{lemma:reward-concentration}For any $1\le i\le M-1$
and $C\in\mathcal{B}_{i}$, one has
\[
\mathbb{P}\left\{ \exists k\in\mathcal{I}_{\mathsf{p}(C)}^{\prime},m_{C,i}\le2m_{C,i}^{\star}:|\bar{Y}_{C,i}^{(k)}-\bar{f}_{C}^{(k)}|\ge\frac{1}{4}U(m_{C,i},T,C)\right\} \le\frac{2m_{C,i}^{\star}}{T|C|^{d}}.
\]

\end{lemma}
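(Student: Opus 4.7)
}

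The plan is to bound the ``bad'' probability for a single arm $k$ and a single realization $m$ of $m_{C,i}$ via Hoeffding's inequality, and then union bound over the at-most-two arms in $\mathcal{I}_{\mathsf{p}(C)}^{\prime}$ together with the at most $2m_{C,i}^{\star}$ admissible values of $m_{C,i}$. The key preliminary observation is that, conditional on $m_{C,i}=m$, the round-robin scheduling inside bin $C$ during batch $i$ produces $n_{C,i}^{(k)}\in\{\lfloor m/2\rfloor,\lceil m/2\rceil\}$ samples of arm $k$ whose rewards are i.i.d.\ $[0,1]$-valued with mean $\bar{f}_{C}^{(k)}$. Since $|\mathcal{I}_{\mathsf{p}(C)}^{\prime}|=2$ (otherwise $\mathsf{p}(C)$ would not have been split), the round-robin rule selects arm $k$ deterministically as a function of the rank of the arrival within $C$, so the rewards collected for arm $k$ are a deterministic sub-selection of an i.i.d.\ sequence, which is itself i.i.d.

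I would then carry out the following steps. First, establish the i.i.d.\ property above: conditional on $m_{C,i}=m$, the sequence of in-bin contexts $X_{s_{1}},\ldots,X_{s_{m}}$ is i.i.d.\ from $P_{X}|_{C}$, and for any $j$ with $A_{s_{j}}=k$ the reward $Y_{s_{j}}$ has conditional mean $f^{(k)}(X_{s_{j}})$, giving unconditional mean $\bar{f}_{C}^{(k)}$ and values in $[0,1]$. Second, apply Hoeffding: for each fixed $m$ and $k$,
\[
\mathbb{P}\!\left(|\bar{Y}_{C,i}^{(k)}-\bar{f}_{C}^{(k)}|\geq \tfrac{1}{4}U(m,T,C)\;\Big|\;m_{C,i}=m\right)\le 2\exp\!\bigl(-2n_{C,i}^{(k)}\cdot \tfrac{1}{16}U(m,T,C)^{2}\bigr).
\]
Since $n_{C,i}^{(k)}\ge m/2$ and $\tfrac{1}{16}U(m,T,C)^{2}=\log(2T|C|^{d})/m$, the exponent is at most $-\log(2T|C|^{d})$, so the conditional probability is at most $1/(T|C|^{d})$. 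Third, take a union bound over the two possible $k$ and over $m\in\{1,\ldots,2m_{C,i}^{\star}\}$, which yields
\[
\mathbb{P}\!\left\{\exists k\in\mathcal{I}_{\mathsf{p}(C)}^{\prime},\,m_{C,i}\le 2m_{C,i}^{\star}:\,|\bar{Y}_{C,i}^{(k)}-\bar{f}_{C}^{(k)}|\ge \tfrac{1}{4}U(m_{C,i},T,C)\right\}\le \frac{2m_{C,i}^{\star}}{T|C|^{d}},
\]
as claimed (up to an absorbable constant, which can be tightened by weighting by $\mathbb{P}(m_{C,i}=m)$ instead).

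The main obstacle, in my view, is not the Hoeffding calculation but the careful verification of the i.i.d.\ property in the first step, since the set of times at which arm $k$ is actually pulled in $C$ is a random, data-dependent subset of the batch. The argument hinges on the fact that (i) the round-robin rule is measurable with respect to the order of in-bin arrivals alone, not the rewards, and (ii) the in-bin arrivals are themselves an i.i.d.\ sample from $P_{X}|_{C}$ by the standard thinning property of i.i.d.\ sequences restricted to a measurable event. Once these two facts are stated cleanly, conditioning on $m_{C,i}=m$ reduces the problem to averaging i.i.d.\ bounded random variables and the remaining steps are a routine Hoeffding-plus-union-bound computation.
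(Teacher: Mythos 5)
Your proposal matches the paper's proof in essence: both reduce the claim to Hoeffding's inequality at a fixed sample size, applied to the i.i.d.\ rewards produced by the non-adaptive round-robin schedule within the bin, followed by a union bound over the two candidate arms and over all admissible sample sizes. The only difference is bookkeeping --- the paper indexes the union bound by the per-arm pull count $\tau\le m_{C,i}^{\star}$ rather than the total in-bin count $m\le 2m_{C,i}^{\star}$, which is how it lands exactly on the stated constant $2m_{C,i}^{\star}/(T|C|^{d})$ where your version gives $4m_{C,i}^{\star}/(T|C|^{d})$, as you note.
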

\begin{proof}
Recall in Algorithm~\ref{alg:adaptive-bin} we pull each arm in a
round-robin fashion within a bin during batch $i$. Fix $\tau>0$.
Let $\bar{Y}_{\tau}^{(k)}=\sum_{j=1}^{\tau}Y_{j}^{(k)}/\tau$ where
$Y_{j}^{(k)}$'s are i.i.d.~random variables with $Y_{j}^{(k)}\in[0,1]$
and $\mathbb{E}[Y_{j}^{(k)}]=\bar{f}_{C}^{(k)}$. By Hoeffding's inequality,
with probability $1/(T|C|^{d})$, we have
\[
|\bar{Y}_{\tau}^{(k)}-\bar{f}_{C}^{(k)}|\ge\sqrt{\frac{\log(2T|C|^{d})}{2\tau}}.
\]
Applying union bound to get
\[
\mathbb{P}\left\{ \exists k\in\mathcal{I}_{\mathsf{p}(C)},0\le\tau\le m_{C,i}^{\star}:|\bar{Y}_{\tau}^{(k)}-\bar{f}_{C}^{(k)}|\ge\sqrt{\frac{\log(2T|C|^{d})}{2\tau}}\right\} \le\frac{2m_{C,i}^{\star}}{T|C|^{d}},
\]
which completes the proof.
\end{proof}
\begin{lemma}\label{lemma:smoothness-avg}Fix $k\in\{1,-1\}$ and
$C\in\mathcal{T}$, for any $x\in C$, one has
\[
|\bar{f}_{C}^{(k)}-f^{(k)}(x)|\le c_{0}|C|^{\beta},
\]

where $c_{0}=2Ld^{\beta/2}+1$.

\end{lemma}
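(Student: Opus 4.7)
The plan is to prove the bound by a direct integration argument that leverages Assumption~\ref{assumption:smoothness}. First I would recall that $C$ is a hypercube of side length $|C|$, so for any two points $x,x'\in C$ one has $\|x-x'\|_2\le\sqrt{d}\,|C|$. Combined with the $(\beta,L)$-smoothness of $f^{(k)}$, this yields the pointwise bound
\[
|f^{(k)}(x)-f^{(k)}(x')|\;\le\;L\|x-x'\|_2^{\beta}\;\le\;L d^{\beta/2}|C|^{\beta},
\qquad\forall x,x'\in C.
\]

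Next I would write $\bar{f}_C^{(k)}$ as a conditional expectation and move $f^{(k)}(x)$ inside the integral. Since $x\in C$ is fixed and $x'$ ranges over $C$, the triangle inequality gives
\[
|\bar{f}_C^{(k)}-f^{(k)}(x)|
=\left|\frac{1}{\mathbb{P}_X(C)}\int_C\bigl(f^{(k)}(x')-f^{(k)}(x)\bigr)\,\mathrm{d}\mathbb{P}_X(x')\right|
\le\frac{1}{\mathbb{P}_X(C)}\int_C|f^{(k)}(x')-f^{(k)}(x)|\,\mathrm{d}\mathbb{P}_X(x').
\]
Plugging in the smoothness bound from the previous step produces the estimate $|\bar{f}_C^{(k)}-f^{(k)}(x)|\le Ld^{\beta/2}|C|^{\beta}$. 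Since $Ld^{\beta/2}\le 2Ld^{\beta/2}+1=c_0$, the desired bound follows immediately. There is no real obstacle here; the only thing to be mindful of is that the inequality uses only smoothness and the bounded diameter of a cube, and that the constant $c_0$ stated in the lemma is chosen generously (likely to match constants appearing in the elimination threshold analysis of Lemma~\ref{lemma:clean-event-2}) so the slack between $Ld^{\beta/2}$ and $c_0$ causes no issue.
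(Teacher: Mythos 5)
Your proof is correct and follows essentially the same route as the paper's: write $\bar{f}_C^{(k)}-f^{(k)}(x)$ as an average of $f^{(k)}(x')-f^{(k)}(x)$ over $C$, apply the triangle inequality and the $(\beta,L)$-smoothness together with the diameter bound $\|x-x'\|_2\le\sqrt{d}\,|C|$, and absorb $Ld^{\beta/2}$ into the generously chosen constant $c_0$. No issues.
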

\begin{proof}
For notation simplicity, we write $f$ for $f^{(k)}$ in the following
proof. By definition,
\begin{align*}
|\bar{f}_{C}-f(x)| & =|\frac{1}{\mathbb{P}(C)}\int_{C}(f(y)-f(x))\mathsf{d}\mathbb{P}(y)|\\
 & \le\frac{1}{\mathbb{P}(C)}\int_{C}|f(y)-f(x)|\mathsf{d}\mathbb{P}(y)\\
 & \le\frac{1}{\mathbb{P}(C)}\int_{C}L\|x-y\|_{2}^{\beta}\mathsf{d}\mathbb{P}(y),
\end{align*}
where the first inequality uses the triangle inequality, and the second
inequality is due to the smoothness condition. Since $x\in C$, we
further have
\begin{align*}
|\bar{f}_{C}-f(x)| & \le\frac{1}{\mathbb{P}(C)}\int_{C}L\|x-y\|_{2}^{\beta}\mathsf{d}\mathbb{P}(y)\\
 & \le\frac{1}{\mathbb{P}(C)}\int_{C}Ld^{\beta/2}|C|^{\beta}\mathsf{d}\mathbb{P}(y)\\
 & \le c_{0}|C|^{\beta}.
\end{align*}
This completes the proof. 
\end{proof}

\section{Proof of Theorem~\ref{thm:static-lower} \label{sec:Proof-of-Theorem-failure}}

As we argued after the statement of Theorem~\ref{thm:static-lower},
one needs to set $t_{1}\asymp T^{9/19}$, and $t_{2}\asymp T^{15/19}$.
Therefore, throughout the proof, we assume this is true and only focus
on the number $g$ of bins. 

To construct a hard instance, we partition $[0,1]$ into $z$ bins
with equal width. Denote the bins by $C_{j}$ for $j=1,...,z$, and
let $q_{j}$ be the center of $C_{j}$. Define a function $\phi:[0,1]\mapsto\mathbb{R}$
as $\phi(x)=(1-|x|)\mathbf{1}\{|x|\le1\}.$ Correspondingly define
a function $\varphi_{j}:[0,1]\mapsto\mathbb{R}$ as $\varphi_{j}(x)=D_{\phi}z^{-1}\phi(2z(x-q_{j}))\mathbf{1}\{x\in C_{j}\},$
where $D_{\phi}=\min(2^{-1}L,1/4)$. Define a function $f:[0,1]\mapsto\mathbb{R}$:
\[
f(x)=\frac{1}{2}+\varphi_{1}(x).
\]
The problem instance of interest is $v=(f^{(1)}(x)=f(x),f^{(-1)}(x)=\frac{1}{2})$.
It is easy to verify $v\in\mathcal{F}(1,1).$ Throughout the proof,
we condition on the event $E$ specified by Lemma~\ref{lemma:clean-event-1},
which says the number of samples allocated to a bin concentrates well
around its expectation. We will show even under this good event, there
exists a choice of $z$ that makes successive elimination fail to
remove the suboptimal arms at the end of a batch with constant probability. 

\subsection{A helper lemma}

We begin with presenting a helper lemma that will be used extensively
in the later part of the proof. The claim is intuitive: if the sample
size is small, it is not sufficient to tell apart two Bernoulli distributions
with similar means. Then, in our context, arm elimination will not
occur. 

\begin{lemma}\label{lemma:elimination-failure}Assume $m_{B,i}\le2m_{B,i}^{\star}$.
For any $B\subseteq[0,1]$ and $i\in\{1,2\}$. If $\bar{f}_{B}^{(1)}-\bar{f}_{B}^{(-1)}\le\delta\le1/\sqrt{m_{B,i}^{\star}}$
for some $\delta>0$ , then
\[
\mathbb{P}\left(\bar{Y}_{B,i}^{(1)}-\bar{Y}_{B,i}^{(-1)}>U(m_{B,i},T,B)\right)\le\frac{t_{i}}{T}.
\]

\end{lemma}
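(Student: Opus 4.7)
The plan is to reduce the event $\{\bar Y_{B,i}^{(1)} - \bar Y_{B,i}^{(-1)} > U(m_{B,i},T,B)\}$ to two per-arm deviation events and then apply Hoeffding's inequality. Since Algorithm~\ref{alg:adaptive-bin} uses round-robin arm selection inside each bin, conditional on $m_{B,i}$ each arm $k \in \{1,-1\}$ receives at least $\lfloor m_{B,i}/2\rfloor$ pulls in $B$ during batch $i$, so $\bar Y_{B,i}^{(k)}$ is the empirical average of at least $m_{B,i}/2$ i.i.d.\ Bernoulli samples with mean $\bar f_B^{(k)}$.

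First I would observe that if $|\bar Y_{B,i}^{(k)} - \bar f_B^{(k)}| \le (U-\delta)/2$ held for both arms simultaneously, then using the hypothesis $\bar f_B^{(1)} - \bar f_B^{(-1)} \le \delta$ one would get $\bar Y_{B,i}^{(1)} - \bar Y_{B,i}^{(-1)} \le \delta + 2\cdot (U-\delta)/2 = U$, contradicting the event of interest. A union bound plus Hoeffding's inequality (with per-arm sample size at least $m_{B,i}/2$) therefore gives
\[
\mathbb{P}\!\bigl(\bar Y_{B,i}^{(1)} - \bar Y_{B,i}^{(-1)} > U\bigr) \;\le\; 4\exp\!\Bigl(-\tfrac{m_{B,i}(U-\delta)^2}{4}\Bigr).
\]

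Next I would use the hypotheses $\delta \le 1/\sqrt{m_{B,i}^\star}$ and $m_{B,i} \le 2 m_{B,i}^\star$ to show that $U$ dominates $\delta$. Together these give $\delta \le \sqrt{2}/\sqrt{m_{B,i}}$, while $U = 4\sqrt{\log(2T|B|)/m_{B,i}}$, so once $T$ is large enough that $\log(2T|B|) \ge 2$ we have $U \ge 2\delta$, and therefore $(U-\delta)^2 \ge U^2/4 = 4\log(2T|B|)/m_{B,i}$. Substituting makes the Hoeffding exponent at least $\log(2T|B|)$, and the previous display collapses to at most $2/(T|B|)$.

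The main remaining obstacle is bookkeeping: converting $2/(T|B|)$ into the stated $t_i/T$. Using the upper density bound $P_X(B) \le \bar c |B|$ together with $m_{B,i}^\star = (t_i - t_{i-1})P_X(B)$, one has $1/(T|B|) \le \bar c t_i/(T m_{B,i}^\star)$, so as long as the bin is active enough that $m_{B,i}^\star$ is bounded below by a constant (which is the only non-vacuous regime, since otherwise $m_{B,i} \le 2 m_{B,i}^\star$ itself forces the bin to carry $O(1)$ samples and the probability is trivially small), the bound becomes $O(t_i/T)$. Any residual absolute constant can be absorbed by slightly enlarging the multiplicative factor in the definition of $U(\tau,T,C)$, yielding exactly $t_i/T$ for $T$ large enough.
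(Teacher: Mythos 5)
Your argument is essentially sound and reaches the stated bound by a genuinely different route from the paper. The paper fixes a deterministic pull count $\tau$, applies Hoeffding directly to the difference $\bar{Y}_{\tau}^{(1)}-\bar{Y}_{\tau}^{(-1)}$ after using $\delta\le1/\sqrt{\tau}$ to show that $U$ absorbs the bias, and then takes a union bound over all $0<\tau\le m_{B,i}^{\star}$; each value of $\tau$ contributes $g/T$, and the union bound over at most $m_{B,i}^{\star}$ values yields $m_{B,i}^{\star}g/T\le t_{i}/T$. You instead condition on the realized count $m_{B,i}$, decompose into two per-arm deviation events of size $(U-\delta)/2$, and get a single bound $2/(T|B|)=2g/T$ that is uniform over the conditioning --- in the main regime this is actually sharper than $t_i/T$, at the price of an extra bookkeeping step to convert $2g/T$ into $t_{i}/T$. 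That conversion is exactly where your write-up is loosest: $2g/T\le t_{i}/T$ requires $t_{i}\ge 2g$, i.e.\ $m_{B,i}^{\star}\gtrsim 2$, and you wave at the complementary regime with ``the probability is trivially small'' and a suggestion to enlarge the constant in $U(\tau,T,C)$. The latter is not a legitimate move --- $U$ is fixed by the algorithm whose behavior the lemma describes, so you cannot retune it to make your bound close.

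Fortunately the complementary case is genuinely vacuous, just not for the reason you give: if $t_{i}<2g$ then $m_{B,i}^{\star}<2$, hence $m_{B,i}\le 2m_{B,i}^{\star}<4$, and $U(m_{B,i},T,B)=4\sqrt{\log(2T|B|)/m_{B,i}}\ge 2\sqrt{\log 2}>1$, while $\bar{Y}_{B,i}^{(1)}-\bar{Y}_{B,i}^{(-1)}\le 1$ always since rewards lie in $[0,1]$; the event therefore has probability zero. With that observation substituted for the ``absorb the constant'' sentence, your proof is complete. One further small point: your condition ``$T$ large enough that $\log(2T|B|)\ge 2$'' is not innocuous, since $|B|=1/g$ and $g$ may grow with $T$; but $U\ge 2\delta$ only needs $\log(2T|B|)\ge 1/2$, which holds whenever $g\le T$, so nothing is lost.
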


\begin{proof}Fix $0<\tau\le m_{B,i}^{\star}$. Let $\bar{Y}_{\tau}^{(k)}=\sum_{l=1}^{\tau}Y_{l}^{(k)}/\tau$
where $Y_{l}^{(k)}$'s are i.i.d.~random variables with $Y_{l}^{(k)}\in[0,1]$
and $\mathbb{E}[Y_{l}^{(k)}]=\bar{f}_{B}^{(k)}$ for $k\in\{1,-1\}$.
Recall $U(\tau,T,B)=4\sqrt{\frac{\log(2T|B|)}{\tau}}$\footnote{We
remark the constant 4 is not essential for the proof to work. For
any $c>0$ , $c\log(2T|B|)=\log((2T|B|)^{c})$ so the final success
probability is still tiny as long as $T$ is sufficiently large.}.
Then, 
\begin{align*}
\mathbb{P}\left(\bar{Y}_{\tau}^{(1)}-\bar{Y}_{\tau}^{(-1)}>U(2\tau,T,B)\right) & \overset{\mathrm{(i)}}{\le}\mathbb{P}\left(\bar{Y}_{\tau}^{(1)}-\bar{Y}_{\tau}^{(-1)}>\delta+\sqrt{\frac{\log(2T/g)}{2\tau}}\right)\\
 & \overset{\mathrm{(ii)}}{\le}\mathbb{P}\left(\bar{Y}_{\tau}^{(1)}-\bar{Y}_{\tau}^{(-1)}>\bar{f}_{B}^{(1)}-\bar{f}_{B}^{(-1)}+\sqrt{\frac{\log(2T/g)}{2\tau}}\right)\\
 & \overset{\mathrm{(iii)}}{\le}\frac{g}{T},
\end{align*}
where step (i) is because $\delta\le1/\sqrt{m_{B,i}^{\star}}\le1/\sqrt{\tau}$,
step (ii) is due to $\bar{f}_{B}^{(1)}-\bar{f}_{B}^{(-1)}\le\delta$,
and step (iii) uses Hoeffding's inequality. Applying union bound to
get
\[
\mathbb{P}\left(\exists0<\tau\le m_{B,i}^{\star}:\bar{Y}_{\tau}^{(1)}-\bar{Y}_{\tau}^{(-1)}>U(2\tau,T,B)\right)\le\frac{m_{B,i}^{\star}g}{T}\le\frac{t_{i}}{T}.
\]
This finishes the proof. \end{proof}

\subsection{Three failure cases for $g$}

Fix some small constant $\varepsilon>0$ to be specified later. From
now on, we use $\hat{\pi}$ to denote $\hat{\pi}_{\mathrm{static}}$
for simplicity. We split the proof into three cases: (1) $g\geq T^{3/19+\varepsilon}$;
(2) $g\leq T^{3/19-\varepsilon}$; (3) and $g\in(T^{3/19-\varepsilon},T^{3/19+\varepsilon})$. 

\paragraph{Case 1: $g\geq T^{3/19+\varepsilon}$.}

Set $z=T^{3/19-\varepsilon/2}$. Assume without loss of generality
that $g=H\cdot z$ for some $H\geq4$; see Figure~\ref{fig:N_ge_g}
for an illustration of the instance. Suppose $C_{1}=\cup_{l=1}^{H}B_{l}$,
where $B_{l}$'s are the bins produced by $\hat{\pi}$ that lie in
$C_{1}$. It is clear that 
\begin{align}
R_{T}(\hat{\pi}) & \overset{\mathrm{(i)}}{\ge}\mathbb{E}\left[\sum_{t=t_{1}+1}^{t_{2}}\left(f^{\star}(X_{t})-f^{\hat{\pi}_{t}(X_{t})}(X_{t})\right)\right]\nonumber \\
 & \overset{\mathrm{(ii)}}{=}\mathbb{E}\left[\sum_{t=t_{1}+1}^{t_{2}}\left(f^{\star}(X_{t})-f^{\hat{\pi}_{t}(X_{t})}(X_{t})\right)\mathbf{1}\{X_{t}\in C_{1}\}\right]\nonumber \\
 & \overset{\mathrm{(iii)}}{\geq}\sum_{t=t_{1}+1}^{t_{2}}\sum_{l=H/4}^{3H/4}\mathbb{E}\left[\left(f^{\star}(X_{t})-f^{\hat{\pi}_{t}(X_{t})}(X_{t})\right)\mathbf{1}\{X_{t}\in B_{l}\}\right],\label{eq:reg-case-1}
\end{align}
where step (i) is because the total regret is greater than the regret
incurred during the second batch, step (ii) uses the fact that under
the instance $v$, the mean rewards of the two arms differ only in
$C_{1}$, and step (iii) arises since $C_{1}=\cup_{l=1}^{H}B_{l}$.
Now we turn to lower bounding $\mathbb{E}\left[\left(f^{\star}(X_{t})-f^{\hat{\pi}_{t}(X_{t})}(X_{t})\right)\mathbf{1}\{X_{t}\in B_{l}\}\right]$
for each $H/4\leq l\leq3H/4$. 

Consider any such $B_{l}$. We drop the subscripts and write $B$
instead for simplicity. By the design of $v$, we have $\bar{f}_{B}^{(1)}-\bar{f}_{B}^{(-1)}\le D_{\phi}z^{-1}=\delta$,
which obeys $D_{\phi}z^{-1}\le1/\sqrt{m_{B,1}^{\star}}$---a consequence
of the choice of $z$. Additionally, we have $m_{B,1}\le2m_{B,1}^{\star}$
under $E$. Therefore, we can invoke Lemma~\ref{lemma:elimination-failure}
to obtain
\[
\mathbb{P}\left(\bar{Y}_{B,1}^{(1)}-\bar{Y}_{B,1}^{(-1)}>U(m_{B,1},T,B)\right)\le\frac{t_{1}}{T}\leq\frac{1}{2}.
\]
In words, with probability exceeding $1/2$, no elimination will happen
for the bin $B$. As a result, we obtain 
\begin{align*}
R_{T}(\hat{\pi}) & \ge\sum_{t=t_{1}+1}^{t_{2}}\sum_{l=H/4}^{3H/4}\mathbb{E}\left[\left(f^{\star}(X_{t})-f^{\hat{\pi}_{t}(X_{t})}(X_{t})\right)\mathbf{1}\{X_{t}\in B_{l}\}\right]\\
 & \gtrsim H\cdot\frac{t_{2}}{g}\cdot z^{-1}\asymp\frac{t_{2}}{z^{2}}=T^{\frac{9}{19}+\epsilon},
\end{align*}
where we have used the choice of $z$. So Theorem~\ref{thm:static-lower}
holds with $\kappa=\epsilon$.

\paragraph{Case 2: $g\leq T^{3/19-\varepsilon}$.}

Set $z=T^{3/19-\varepsilon/8}$. We have $g<z$ and there exists $H>1$
such that $z=H\cdot g$; see Figure~\ref{fig:N_ll_g} for an illustration
of the instance. Let $B$ be the bin produced by $\hat{\pi}$ such
that $C_{1}\subset B$. By the design of $v$, we have 
\[
\bar{f}_{B}^{(1)}-\bar{f}_{B}^{(-1)}\le\frac{1}{H}(1/2+D_{\phi}z^{-1})+(1-\frac{1}{H})\frac{1}{2}-\frac{1}{2}=\frac{D_{\phi}z^{-1}}{H}.
\]
Let $\delta=\frac{D_{\phi}z^{-1}}{H}$, we have $\delta\le1/\sqrt{m_{B,1}^{\star}}$
due to our choice of $z$. Additionally, we have $m_{B,1}\le2m_{B,1}^{\star}$
under $E$. Therefore, we can invoke Lemma~\ref{lemma:elimination-failure}
to obtain
\[
\mathbb{P}\left(\bar{Y}_{B,1}^{(1)}-\bar{Y}_{B,1}^{(-1)}>U(m_{B,1},T,B)\right)\le\frac{t_{1}}{T}\le\frac{1}{2}.
\]
Thus, with probability exceeding $1/2$, the suboptimal arm is not
eliminated in $B$. Similar to the previous case, we obtain

\begin{align*}
R_{T}(\hat{\pi}) & \overset{\mathrm{}}{\ge}\mathbb{E}\left[\sum_{t=t_{1}+1}^{t_{2}}\left(f^{\star}(X_{t})-f^{\hat{\pi}_{t}(X_{t})}(X_{t})\right)\right]\\
 & \overset{\mathrm{}}{=}\mathbb{E}\left[\sum_{t=t_{1}+1}^{t_{2}}\left(f^{\star}(X_{t})-f^{\hat{\pi}_{t}(X_{t})}(X_{t})\right)\mathbf{1}\{X_{t}\in C_{1}\}\right]\\
 & \apprge\frac{t_{2}}{z^{2}}\overset{\mathrm{}}{=}T^{\frac{9}{19}+\frac{\epsilon}{4}}.
\end{align*}
So Theorem~\ref{thm:static-lower} holds with $\kappa=\epsilon/4$.

\paragraph{Case 3: $g\in(T^{3/19-\varepsilon},T^{3/19+\varepsilon})$.}

Set $z\asymp T^{1/4}$. We then have $g<z$, as long as $\varepsilon\leq1/19$.
And there exists $H>1$ such that $z=H\cdot g$; see Figure~\ref{fig:N_ll_g}
for an illustration of the instance. Let $B$ be the bin produced
by $\hat{\pi}$ such that $C_{1}\subset B$. By the design of $v$,
we have 
\[
\bar{f}_{B}^{(1)}-\bar{f}_{B}^{(-1)}\le\frac{1}{H}(1/2+D_{\phi}z^{-1})+(1-\frac{1}{H})\frac{1}{2}-\frac{1}{2}=\frac{D_{\phi}z^{-1}}{H}.
\]
Let $\delta=\frac{D_{\phi}z^{-1}}{H}$, we have $\delta\le1/\sqrt{m_{B,1}^{\star}}$
due to our choice of $z$. Additionally, we have $m_{B,1}\le2m_{B,1}^{\star}$
under $E$. Therefore, we can invoke Lemma~\ref{lemma:elimination-failure}
to obtain
\[
\mathbb{P}\left(\bar{Y}_{B,1}^{(1)}-\bar{Y}_{B,1}^{(-1)}>U(m_{B,1},T,B)\right)\le\frac{t_{1}}{T}\le\frac{1}{4}.
\]
This means with probability at least $3/4$, arm elimination does
not occur in $B$ after the first batch. Moreover, since $\delta\le1/\sqrt{m_{B,2}^{\star}}$
by the choice of $z$, and $m_{B,2}\le2m_{B,2}^{\star}$ under $E$,
we can apply Lemma~\ref{lemma:elimination-failure} again to get

\[
\mathbb{P}\left(\bar{Y}_{B,2}^{(1)}-\bar{Y}_{B,2}^{(-1)}>U(m_{B,2},T,B)\right)\le\frac{t_{2}}{T}\le\frac{1}{4}.
\]
In all, with probability at least $1/2$, arm elimination does not
occur in $B$ after the second batch. Similar to before, we reach
the conclusion that 
\begin{align*}
R_{T}(\hat{\pi}) & \overset{\mathrm{}}{\ge}\mathbb{E}\left[\sum_{t=t_{2}+1}^{T}\left(f^{\star}(X_{t})-f^{\hat{\pi}_{t}(X_{t})}(X_{t})\right)\right]\\
 & \overset{\mathrm{}}{=}\mathbb{E}\left[\sum_{t=t_{2}+1}^{T}(f^{\star}(X_{t})-f^{\hat{\pi}_{t}(X_{t})}(X_{t}))\mathbf{1}\{X_{t}\in C_{1}\}\right]\\
 & \overset{\mathrm{}}{\gtrsim}\frac{T}{z^{2}}\overset{\mathrm{}}{=}T^{\frac{1}{2}}.
\end{align*}
We see that Theorem~\ref{thm:static-lower} holds with $\kappa=1/38$.

\section{Proof of Theorem~\ref{thm:alpha-lower}}\label{sec:alpha-lower-proof}
Recall $\gamma(\alpha^\star)=\beta(\alpha^\star+1)/(2\beta+d)=(\alpha^\star+1)/3=\gamma^\star$. Denote by $h_M(\gamma^\star)=(1-\gamma^\star)/(1-(\gamma^\star)^M)$. Fix some small constant $\epsilon>0$. We first deal with $M=2$ and then extend the analysis to general $M$. Throughout the proof, we use the fact that the algorithm needs to provide its first batch size $t_1$ prior to the game, and design $\alpha^\star$ such that any choice of $t_1$ would fail.

\subsection{When $M=2$}
Under $M=2$,  the theoretical optimal rate is $T^{(1-\gamma(\alpha^\star))/(1-\gamma(\alpha^\star)^2)}=T^{3/(\alpha^\star+4)}$. 
\paragraph{Case of  $t_1=T^{3/5+\epsilon}$.} Take $\alpha^\star=1$. By relation~(\ref{eq:lower-bound-final}), we have
\[\sup_{f\in\mathcal{F}(\alpha^\star,\beta)}
R_{T}(\pi)
\gtrsim 
\max\left\{t_1,\frac{T}{t_1^{\frac{\alpha^\star+1}{3}}}\right\}
\ge t_1=T^{\frac{3}{5}+\epsilon}.\]

\paragraph{Case of  $t_1=T^{3/4-\epsilon}$.} Take $\alpha^\star>0$ such that $T^{3/(\alpha^\star+4)}>t_1$. By relation~(\ref{eq:lower-bound-final}), we have
\[\sup_{f\in\mathcal{F}(\alpha^\star,\beta)}
R_{T}(\pi)
\gtrsim \max\left\{t_1,\frac{T}{t_1^{\frac{\alpha^\star+1}{3}}}\right\}
\ge \frac{T}{t_1^{\frac{\alpha^\star+1}{3}}}
=T^{\frac{3}{\alpha^\star+4}
+(\alpha^\star+1)(\frac{\epsilon}{3}+\frac{1}{\alpha^\star+4}-\frac{1}{4})},\]
where $\frac{\epsilon}{3}+\frac{1}{\alpha^\star+4}-\frac{1}{4}>0$ due to the choice of $\alpha^\star$.

\subsection{General $M$}
Since $(1-\gamma(\alpha))/(1-\gamma^M(\alpha))$ is decreasing as $\alpha$ increases, one has 
\[\frac{1-\gamma(1)}{1-\gamma^M(1)}
<\frac{1-\gamma(0)}{1-\gamma^M(0)}.\]
We can divide the proof into the below two cases.
\paragraph{Case of  $t_1=T^{(1-\gamma(1))/(1-\gamma^M(1))+\epsilon}$.} Take $\alpha^\star=1$. During the first batch, the learner can do no better than pull an arm uniformly at random. We can use the instance $(f_1(x)=1,f_2(x)=0)$ so that
\[\sup_{f\in\mathcal{F}(\alpha^\star,\beta)}
R_{T}(\pi)
\gtrsim t_1=T^{\frac{1-\gamma^\star}{1-(\gamma^{\star})^M} +\epsilon}.\]

\paragraph{Case of  $t_1=T^{(1-\gamma(0))/(1-\gamma^M(0))-\epsilon}$.} Take $\alpha^\star>0$ such that  $T_1=T^{(1-\gamma^\star)/(1-(\gamma^\star)^{M})}>t_1$. Define 
\begin{equation*}
    T_2=T^{\frac{1-\gamma^\star}{1-(\gamma^\star)^{M-1}}}\cdot
    t_1^{\frac{\gamma^\star-(\gamma^\star)^{M-1}}{1-(\gamma^\star)^{M-1}}},
\end{equation*}
and
\begin{equation*}
    T_i=t_1^{\frac{\gamma^\star-(\gamma^\star)^{i-1}}{\gamma^\star-1}}\cdot
    T_2^{\frac{(\gamma^\star)^{i-1}-1}{\gamma^\star-1}}
\end{equation*}
when $3\le i\le M$. It can be verified that $T_M=T$. Define the events
\[A_{i}=\{t_{i-1}<T_{i-1}<T_{i}\le t_{i}\}.\]
for $2\le i\le M$. Moreover, take $z_{2}=\lceil(36t_{1}(M-1)^{2})^{1/(2\beta+d)}\rceil$ and $z_{i}=\lceil(36T_{i-1}(M-1)^{2})^{1/(2\beta+d)}\rceil$ for $3\le i\le M$. By relation~\eqref{eq:reg-to-m}
, one has
\begin{align*}
    \sup_{(f,\frac{1}{2})\in\mathcal{F}(\alpha^\star,\beta)}R_{T}(\pi;f) &\apprge M^{-\frac{(\alpha\beta+1)(1+\alpha)}{\alpha}}T_{i}z_{i}^{-\beta(1+\alpha)}\left(\mathbb{E}_{\omega\sim\omgdist}\mathbb{P}_{\pi,\omega}(A_{m})-\frac{1}{2(M-1)}\right)^{\frac{1+\alpha}{\alpha}},
\end{align*}
for any $2\le i\le M$. Since there exists $i^\star\in\{2,...,M\}$ such that $\mathbb{E}_{\omega\sim\omgdist}\mathbb{P}_{\pi,\omega}(A_{i^\star})\ge1/(M-1)$, we reach
\begin{align*}
    \sup_{(f,\frac{1}{2})\in\mathcal{F}(\alpha^\star,\beta)}R_{T}(\pi;f) &\apprge M^{-\frac{(\alpha\beta+1)(1+\alpha)}{\alpha}}T_{i^\star}z_{i^\star}^{-\beta(1+\alpha)}\left(\mathbb{E}_{\omega\sim\omgdist}\mathbb{P}_{\pi,\omega}(A_{i^\star})-\frac{1}{2(M-1)}\right)^{\frac{1+\alpha}{\alpha}}\\
    &\apprge M^{-\frac{(\alpha\beta+2)(1+\alpha)}{\alpha}}T_{i^\star}z_{i^\star}^{-\beta(1+\alpha)}\\
    &\asymp T^{\frac{1-\gamma^\star}{1-(\gamma^{\star})^M} \;+\; c_2(\gamma^{\star})^{M-1}\cdot\frac{1-\gamma^\star}{1-(\gamma^{\star} )^{M-1}}
    }\cdot(\frac{1}{M})^{c_3},
\end{align*}
for some $c_2, c_3>0$.

\end{document}